\theoremstyle{definition}
\newtheorem* {theorem*}{Theorem}
\newtheorem* {conjecture*}{Conjecture}
\newtheorem{theorem}{Theorem}[section]
\theoremstyle{definition}
\newtheorem* {claim}{Claim}
\newtheorem* {example*}{Example}
\newtheorem{lemma}[theorem]{Lemma}
\theoremstyle{definition}
\newtheorem{definition}[theorem]{Definition}
\theoremstyle{definition}
\newtheorem{proposition}[theorem]{Proposition}
\newtheorem{corollary}[theorem]{Corollary}
\newtheorem{remark}[theorem]{Remark}
\theoremstyle{definition}
\newtheorem {example}[theorem]{Example}
\theoremstyle{definition}
\theoremstyle{definition}
\theoremstyle{definition}
\def\({\left(}
\def\){\right)}
\newcommand{\CC}{\mathbb{C}}
\newcommand{\QQ}{\mathbb{Q}}
\newcommand{\cP}{\mathcal{P}}
\newcommand{\cR}{\mathcal{R}}
\newcommand{\cC}{\mathcal{C}}
\def\cX{\mathcal{X}}
\def\NN{\mathbb{N}}
\def\CC{\mathbb{C}}
\def\ZZ{\mathbb{Z}}
\def\GL{\mathrm{GL}}
\def\spanning{\textnormal{-span}}
\newcommand{\supp}{\mathrm{supp}}
\newcommand{\cL}{\mathcal{L}}
\def\fk{\mathfrak}
\def\barr{\begin{array}}
\def\earr{\end{array}}
\def\ba{\begin{aligned}}
\def\ea{\end{aligned}}
\def\be{\begin{equation}}
\def\ee{\end{equation}}
\def\Cyc{\mathrm{Cyc}}
\def\qquand{\qquad\text{and}\qquad}
\def\quand{\quad\text{and}\quad}
\def\qquord{\qquad\text{or}\qquad}
\def\inv{\mathrm{Inv}}
\def\I{\textsf{Invol}}
\def\DesR{\mathrm{Des}_R}
\def\DesL{\mathrm{Des}_L}
\def\omdef{\overset{\mathrm{def}}}
\def\id{\mathrm{id}}
\def\PP{\mathbb{P}}
\def\fkS{\fk S}
\def\ben{\begin{enumerate}}
\def\een{\end{enumerate}}
\def\fpf{{\tt {FPF}}}
\newcommand{\wfpf}{\Theta}
\def\DesF{\mathrm{Des}_R^\fpf}
\def\DesI{\mathrm{Des}_V}
\def\DesIF{\mathrm{Des}_{V}^{\fpf}}
\def\Dfpf{\hat D_{\fpf}}
\def\cfpf{\hat c_{\fpf}}
\def\flambda{\nu}
\def\Ffpf{\hat F^\fpf}
\def\Sfpf{\hat {\fk S}^\fpf}
\def\ifpf{\iota}
\newcommand{\xRightarrow}[2][]{\ext@arrow 0359\Rightarrowfill@{#1}{#2}}
\newcommand{\Fl}{\operatorname{Fl}}
\renewcommand{\O}{\operatorname{O}}
\newcommand{\Sp}{\operatorname{Sp}}
\newcommand{\Ess}{\operatorname{Ess}}
\newcommand{\cA}{\mathcal{A}}
\def\cAfpf{\cA_\fpf}
\def\cRfpf{\hat\cR_\fpf}
\def\F{\textsf{FPF}}
\newcommand{\arc}[2]{ \ar @/^#1pc/ @{-} [#2] }
\def\arcstop{\endxy\ }
\def\arcstart{\ \xy<0cm,-.15cm>\xymatrix@R=.1cm@C=.3cm }
\newcommand{\arcstartc}[1]{\ \xy<0cm,-.15cm>\xymatrix@R=.1cm@C=#1cm}
\def\Ffpf{\hat F^\fpf}
\def\Sfpf{\hat {\fk S}^\fpf}
\newcommand{\pf}{\operatorname{pf}}
\def\ellfpf{\hat\ell_\fpf}
\def\Pfpf{\hat\Psi}
\def\iT{\hat{\fk T}}
\def\fT{\iT^\fpf}
\def\fpsi{\eta_\fpf}
\newcommand{\minlex}{\operatorname{lt}}
\let\bxd\boxed
\renewcommand{\boxed}[1]{\hspace{0.7pt}\bxd{\!#1\!}\hspace{0.7pt}}
\def\Mfpf{\fk M^\fpf}
\def\Cfpf{\fk C^\fpf}
\def\Lfpf{L^\fpf}
\def\Fmap{\textsf{arc}}
\def\Imap{\textsf{dearc}}
\numberwithin{equation}{section}
\renewcommand{\@makefnmark}{\mbox{\textsuperscript{}}}
\begin{document}
\title{Fixed-point-free involutions and Schur $P$-positivity}

\author{
    Zachary Hamaker \\
    Department of Mathematics \\
    University of Michigan \\
    { \tt zachary.hamaker@gmail.com}
 \and
    Eric Marberg \\
    Department of Mathematics \\
    HKUST \\
    {\tt eric.marberg@gmail.com}
\vspace{3mm}
\and
    Brendan Pawlowski\footnote{This author was partially supported by NSF grant 1148634.} \\
    Department of Mathematics \\
    University of Michigan \\
    {\tt br.pawlowski@gmail.com}
}

\date{}

\maketitle

\begin{abstract}
The orbits of the symplectic group acting on the type A flag variety are indexed by the fixed-point-free involutions in a finite symmetric group.
The cohomology classes of the closures of these orbits have polynomial representatives $\Sfpf_z$ akin to Schubert polynomials.
We show that the \emph{fixed-point-free involution Stanley symmetric functions} $\Ffpf_z$, which are stable limits of the polynomials $\Sfpf_z$, are Schur $P$-positive.
To do so, we construct an analogue of the Lascoux-Sch\"utzenberger tree, an algebraic recurrence that computes Schubert polynomials.
As a byproduct of our proof, we obtain a Pfaffian formula of geometric interest for $\Sfpf_z$ when $z$ is a fixed-point-free version of a Grassmannian permutation.
We also classify the fixed-point-free involution Stanley symmetric functions that are single Schur $P$-functions, and show that the 
decomposition of $\Ffpf_z$ into Schur $P$-functions is unitriangular with respect to dominance order on strict partitions.
These results and proofs mirror previous work by the authors related to the orthogonal group action on the type A flag variety.
 \end{abstract}

\setcounter{tocdepth}{2}
\tableofcontents

\section{Introduction}\label{intro-sect}

Fix a positive integer $n$ and
let $B \subset \GL_n(\CC)$ be the Borel subgroup of lower triangular matrices in the general linear group.
The orbits $\Omega_w$ of the opposite Borel subgroup of upper triangular matrices acting on 
the \emph{flag variety} $\Fl(n) = \GL_n(\CC)/B$ are indexed by permutations  $w\in S_n$ and their closures $X_w$ give $\Fl(n)$ a CW-complex structure.
The cohomology ring of $\Fl(n)$ has a presentation in terms of the \emph{Schubert polynomials} $\fkS_w$ introduced by Lascoux and Sch\"utzenberger \cite{LS}.
For the precise definition of $\fkS_w$, see Section~\ref{stab-sect}.

Schubert polynomials are of continued interest to both algebraic geometers and combinatorialists.
Computing the positive structure coefficients $c^w_{uv}$ in the expansion $\fkS_u \fkS_v = \sum c^w_{uv} \fkS_w$ 
remains a prominent open problem in algebraic combinatorics.
Among other interesting formulas,
 there is a generating function-type description of $\fkS_w$ in terms of the reduced words for $w$ \cite{BJS},
and a determinantal formula for $\fkS_w$ when $w$ is \emph{vexillary} ($2143$-avoiding) or \emph{fully commutative}
($321$-avoiding). When $w$ is \emph{dominant} ($132$-avoiding), $\fkS_w$ is a monomial.

Assume $n$ is even and consider the symplectic group $\Sp_n(\CC)$ acting on $\Fl(n)$.
There are again finitely many orbits, now indexed by the fixed-point-free involutions in $S_n$ \cite{RichSpring}.
For a fixed-point-free involution $z \in S_n$, the cohomology class of the corresponding orbit closure $Y_z$ is represented by the \emph{fixed-point-free involution Schubert polynomial} $\Sfpf_z$ introduced in~\cite{WY} and described precisely by Definition~\ref{fS-def}.
In~\cite{HMP1}, we gave a generating function-type description of $\Sfpf_z$ in terms of reduced words and derived a simple product formula for $\Sfpf_z$ when $z$ is a dominant fixed-point-free involution.
In this paper, we continue to study $\Sfpf_z$ and related combinatorics. Some of this combinatorics also appears in representation theory when studying the quasi-parabolic Iwahori-Hecke algebra modules  defined by Rains and Vazirani~\cite{RainsVazirani}.

The groups $\O_n(\CC)$ and $\GL_p(\CC) \times \GL_q(\CC)$ (with $p+q=n)$ also act on $\Fl(n)$ with finitely many orbits.
This paper is a continuation of the authors' previous work on the $\O_n(\CC)$ case~\cite{HMP4}.
The $\GL_p(\CC)\times \GL_q(\CC)$ case has not yet been as thoroughly investigated, though there has been some recent progress in \cite{burks2018reduced}; see also \cite{CJW,WYclans}.

The symmetric group $S_n$ of permutations of $[n] = \{1,2,\dots,n\}$ is a Coxeter group generated by the simple transpositions $s_i = (i,i+1)$ for $1 \leq i \leq n-1$.
For $u \in S_m$ and $v \in S_n$, we write $u \times v$ for the permutation in $S_{m+n}$ that maps $i \mapsto u(i)$ for $i \in [m]$ and  $m+i \mapsto m + v(i)$ for $i \in [n]$.
The \emph{Stanley symmetric function} of $w \in S_n$ is then the stable limit 
\[F_w \omdef = \lim_{m \to \infty} \fkS_{1_m \times w}\] where $1_m$ denotes the identity element of $S_m$.
This is a well-defined homogeneous symmetric function; see Section~\ref{stab-sect}.
These functions were introduced by Stanley to enumerate reduced words~\cite{Stan}.
Edelman and Greene showed bijectively that Stanley symmetric functions are Schur positive using an insertion algorithm~\cite{EG}.

A permutation is \emph{Grassmannian} if it has exactly one descent.
If $w \in S_n$ is Grassmannian then $\fkS_w$ is a Schur polynomial and $F_w$ is a Schur function~\cite[Proposition 2.6.8]{Manivel}.
One can show algebraically that $F_w$ is Schur positive by using the \emph{Lascoux-Sch\"utzenberger tree}~\cite{LS}, an iterated recurrence
for Schubert polynomials based on certain specializations of Monk's rule.
The Lascoux-Sch\"utzenberger tree decomposes $\fkS_w$ into a sum of Schubert polynomials indexed by Grassmannian permutations and other terms whose stable limits vanish.

Let $\F_n$ be the set of fixed-point-free involutions in $S_{2n}$. Define $\wfpf_n = (1,2)(3,4) \dots (2n{-1},2n) \in \F_n$.
The \emph{fixed-point-free involution Stanley symmetric function} of $z \in \F_n$ 
is the limit 
\[\Ffpf_z \omdef = \lim_{n \to \infty} \Sfpf_{\wfpf_n \times z}\]
which is a well-defined homogeneous symmetric function; see Section~\ref{fpf-schub-sect}.
We introduced these functions in \cite{HMP1}
to study the enumeration of certain analogues of reduced words.

The odd
power-sum functions $p_1, p_3, p_5,\dots$ generate a subalgebra $\Gamma$ of the usual algebra of symmetric functions $\Lambda$. This subalgebra has a distinguished basis $\{P_\lambda\}$ indexed by strict integer partitions,
whose elements $P_\lambda$ are the so-called \emph{Schur P-functions}.
See Section~\ref{ss:schur-p} for the precise definition.
In \cite{HMP1}
we conjectured the following statement, which is proved at the end of Section~\ref{fpf-pos-sect}:

\begin{theorem}\label{fpf-our-cor1}
Each $\Ffpf_z$ is \emph{Schur $P$-positive}, i.e., 
$\Ffpf_z \in \NN\spanning\{ P_\lambda : \text{$\lambda$ is a strict partition}\}$.
\end{theorem}

The first step in our proof of this result to identify the ``fixed-point-free'' analogue of a Grassmannian permutation
and then prove that $\Ffpf_z$ is a Schur $P$-function when $z$ is an involution of this type.
The precise definition of an \emph{FPF-Grassmannian} involution is sightly unintuitive; for the details, see Definition~\ref{d:fpf_grass}. 
We can easily describe which Schur $P$-function corresponds to an FPF-Grassmannian involution, however.

The \emph{(FPF-involution) code} of  $z \in \F_n$
is the sequence $\cfpf(z) = (c_1,c_2,\dots,c_{2n})$ in which $c_i$ is the number of positive integers $j$ 
with $j<i <z(j)$ and $j < z(i)$.
Define the \emph{shape} of $z \in \F_n$ to be the partition $\flambda(z)$ given by the transpose of the partition that sorts $\cfpf(z)$.
For example, if $z = 2n\cdots 321 = (1,2n)(2,2n-1)\cdots(n,n+1) \in \F_n$,
then $\cfpf(z) = (0,1,2,\dots,n-1,n-1,\dots,2,1,0)$ and $ \flambda(z) = (2n-2,2n-4,\dots,2).$
The following is proved as Theorem~\ref{fgrass-thm}.

\begin{theorem}
\label{t:fgrass-thm}
If $z \in \F_n$ is FPF-Grassmannian,
then $\flambda(z)$ is strict and $\Ffpf_z = P_{\flambda(z)}$.
\end{theorem}

The second step in our proof of Theorem~\ref{fpf-our-cor1}
is to define an analogue of the Lascoux-Sch\"utzenberger tree for fixed-point-free involutions.
We do this using the transition equations that we introduced in~\cite{HMP3}.
We show that 
repeated applications of these transition equations always result in a sum of $\Sfpf_z$'s
where $z$ is FPF-Grassmannian, along with other terms whose stable limits vanish.
The desired Schur $P$-positivity property follows from Theorem~\ref{t:fgrass-thm} on taking limits.


This proof can be recast as an algorithm to explicitly compute any $\Ffpf_z$.
By choosing an appropriate involution, one can use this algorithm to expand any product $P_\lambda P_\mu$ 
as a positive linear combination of Schur $P$-functions.
In this way, we obtain a new Littlewood-Richardson rule for Schur $P$-functions from our results (see Corollary~\ref{c:LR}).

It remains an open problem to find a bijective proof of Theorem~\ref{t:fgrass-thm}.
Since the FPF-transition equations have a bijective interpretation \cite{HMP3}, a bijective proof of Theorem~\ref{t:fgrass-thm} would,
in principle, lead to a bijective proof of Theorem~\ref{fpf-our-cor1}.
A more direct way of proving Theorem~\ref{fpf-our-cor1} bijectively would be to find an insertion algorithm for 
\emph{fixed-point-free involution words} (see Section~\ref{fpf-schub-sect}).

A permutation $w \in S_n$ is \emph{vexillary} if $F_w$ is a single Schur function.
Analogously, we say that $z \in \F_n$ is \emph{FPF-vexillary} if $\Ffpf_z$ is a single Schur $P$-function.
FPF-Grassmannian involutions are FPF-vexillary by Theorem~\ref{t:fgrass-thm}.
Stanley showed that $w \in S_n$ is vexillary if and only if $w$ avoids the pattern 2143.
A similar result holds for involutions; see Theorem~\ref{fpf-vex-thm} for the full statement.

\begin{theorem}
\label{intro-fvex-thm}
There is a pattern avoidance condition characterizing FPF-vexillary involutions.
\end{theorem}

The \emph{dominance order} on partitions is the partial order $\leq$ with $\lambda \leq  \mu$ 
if $\sum_{i=1}^{m} \lambda_i \leq \sum^m_{i=1} \mu_i$ for all $m \in \NN$.
In Section~\ref{fpf-tri-sect}, we show that the Schur $P$-expansion of $\Ffpf_z$ 
is unitriangular with respect to dominance order,
in the following sense:

\begin{theorem}\label{intro-fpf-tri-thm}
If $z \in \F_n$ then $\flambda(z)$ is strict and
$\Ffpf_z \in P_\flambda + \NN\spanning\{P_\lambda : \lambda  <\flambda(z)\}$.
\end{theorem}

We mention a quick application of these results.
The explicit version of Theorem~\ref{intro-fvex-thm} implies that 
the reverse permutation $2n\cdots 321 \in \F_n$ is FPF-vexillary. 
By Theorem~\ref{intro-fpf-tri-thm}, we therefore have
 $\Ffpf_{2n\cdots 321} = P_{\flambda(2n\cdots 321)} = P_{(2n-2,2n-4,\dots,2)}$.
In prior work, we proved that $\Ffpf_{2n\cdots 321} = (s_{\delta_n})^2$
where $s_\lambda$ is the Schur function of a partition $\lambda$
and $\delta_n=(n-1,\dots,3,2,1)$ \cite[Theorem 1.4]{HMP1}.
Combining these formulas shows that $ P_{(2n-2,2n-4,\dots,2)} = (s_{\delta_n})^2$,
which is a special case of \cite[Theorem V.3]{DeWitt}.

Assume $z \in \F_n$ is FPF-Grassmannian. 
The symmetric function  $\Ffpf_z = P_{\flambda(z)}$
can then be expressed as the Pfaffian of a matrix whose entries are Schur $P$-functions indexed by
partitions with at most two parts. This formula is essentially Schur's original definition of $P_\lambda$ in \cite{Schur}.
In general, the polynomial $\Sfpf_z$ is not equal to $P_{\flambda(z)}$ specialized to finitely many variables.
However, $\Sfpf_z$ has a similar Pfaffian formula which we sketch as follows.

There is an FPF-Grassmannian involution $z$ of shape $(n-\phi_1, n-\phi_2, \dots,n-\phi_r)$ 
associated to each sequence of integers $1 \leq \phi_1 < \phi_2< \dots < \phi_r \leq n$, and we define
 $\Sfpf[\phi_1,\phi_2,\dots,\phi_r;n] = \Sfpf_z$
to be the FPF-involution Schubert polynomial of this element. 
For the precise definition, see \eqref{sfpf-ffpf-eq}.
The following is  restated as Theorem~\ref{tt:pfaffian} and illustrated in a concrete case by Example~\ref{e:pfaffian}.

\begin{theorem}
\label{t:pfaffian}
Suppose  $1 \leq \phi_1 < \phi_2< \dots < \phi_r \leq n$ are integers.
Let $m$ be whichever of $r$ or $r+1$ is even.
Define $\fk M$ to be the $m\times m$ skew-symmetric matrix with
$\fk M_{ij} = - \fk M_{ji}= \Sfpf[\phi_i,\phi_j;n]$
whenever $ i<j$,
where $\Sfpf[\phi_i,\phi_{r+1};n] \omdef = \Sfpf[\phi_i;n]$.
Then
$
\Sfpf[\phi_1,\phi_2,\dots, \phi_r;n] = \pf \fk M
.$
\end{theorem}

Combining this identity with our Lascoux-Sch\"utzenberger tree for fixed-point-free involutions
gives an algorithm for expanding any $\Sfpf_z$ as a sum of Pfaffians.
One piece is missing to make this algorithm effective as a means of computing $\Sfpf_z$:
it remains an open problem to find a simple formula for the terms
$\Sfpf[\phi_i,\phi_j;n]$ appearing in the matrix $\fk M$ in Theorem~\ref{t:pfaffian}.
This is unexpectedly nontrivial.

There is a determinantal formula for $\fkS_w$ which holds when $w \in S_n$ is a vexillary permutation.
Analogously, there should exist a Pfaffian formula for $\Sfpf_z$ applicable when $z$ is any FPF-vexillary involution.
Such a formula would generalize Theorem~\ref{t:pfaffian} since 
FPF-Grassmannian involutions are FPF-vexillary.
There is also a determinantal formula for $\fkS_w$ when $w$ is fully commutative.
This formula should have an analogue for the polynomials $\Sfpf_z$; 
however, we do not yet know
what the appropriate ``fixed-point-free'' analogue of a fully commutative permutation should be.

Knutson, Lam, and Speyer have given a geometric interpretation of the Stanley symmetric function
$F_w$ as the representative for the class of a \emph{graph Schubert variety} in the Grassmannian $\mbox{Gr}(n,2n)$~\cite{KLS}.
It would be interesting to find a geometric interpretation of Theorem~\ref{fpf-our-cor1} in this vein.
Schur $P$-functions are cohomology representatives for Schubert varieties in the orthogonal Grassmannian. 
 We believe there is a way to adapt the construction of Knutson, Lam, and Speyer to give a subvariety of the orthogonal Grassmannian whose class is represented by $\Ffpf_z$, resulting in a geometric proof of Theorem~\ref{fpf-our-cor1}.
A similar approach should also relate $\O_n(\CC)$-orbit closures to the geometry of the Lagrangian Grassmannian.

\subsection*{Acknowledgements}

We thank
Dan Bump, Michael Joyce,
Vic Reiner,
Alex Woo,
Ben Wyser, and
Alex Yong for helpful conversations during the development of this paper.

\section{Preliminaries}
\label{prelim-sect}

Let $\PP\subset \NN \subset \ZZ$ denote the respective sets of positive, nonnegative, and all 
integers. For $n \in \PP$, let $[n]\omdef =\{1,2,\dots,n\}$.
The \emph{support} of a map $w : X\to X$ is the set  $\supp(w) \omdef= \{ i \in X: w(i)\neq i\}$.
Define $S_\ZZ$ as the group of permutations of $\ZZ$ with finite support,
and let $S_\infty\subset S_\ZZ$ be the subgroup of permutations with support contained in $ \PP$.
We view $S_n$ as the subgroup of permutations in $S_\infty$ fixing all integers outside $[n]$.

Throughout, we let $s_i\omdef =(i,i+1) \in S_\ZZ$ for $i \in \ZZ$.
Let $\cR(w)$ be the set of \emph{reduced words} for $w \in S_\ZZ$,
i.e., the  sequences $(s_{i_1},s_{i_2},\dots,s_{i_p})$
of simple transpositions of shortest possible length such that
$w = s_{i_1}s_{i_2} \dots s_{i_p}$.
Write $\ell(w)$ for the 
common length of each word in $\cR(w)$.
When $w : \ZZ \to \ZZ$ is any bijection,
we let $\DesR(w)$ (respectively, $\DesL(w)$) 
denote the set of simple transpositions $s_i$ for $i \in \ZZ$ with $w(i) > w(i+1)$
(respectively $w^{-1}(i) > w^{-1}(i+1)$).
If $w \in S_\ZZ$ then
 $\DesL(w) $ and $\DesR(w)$
 are the usual \emph{right} and \emph{left descent sets} of $w$, consisting of
 the simple transpositions $s$ such that $\ell(sw)<\ell(w)$ and
$\ell(ws)<\ell(w)$, respectively.

\subsection{Divided difference operators}\label{divided-sect}

We recall a few properties of \emph{divided difference operators}. Our main references are
\cite{Knutson, Manivel}.
Let $\cL \omdef= \ZZ\left[x_1,x_2,\dots,x_1^{-1},x_2^{-1},\dots\right]$ be the ring of Laurent
polynomials over $\ZZ$ in a countable set of commuting indeterminates, and let
$\cP \omdef= \ZZ[x_1,x_2,\dots]$ be the subring of polynomials in $\cL$.
The group $S_\infty$ acts on $\cL$ by permuting variables, and one defines
\be\label{partial-i-eq}
\partial_i f \omdef= (f - s_i f)/(x_i-x_{i+1})\qquad\text{for $i \in \PP$ and $f \in \cL$}.
\ee
The \emph{divided difference operator} $\partial_i$ defines a map $\cL \to \cL$ that restricts to
a map $\cP\to \cP$. 
It is clear by definition that $\partial_i f = 0$ if and only if $s_i f = f$.
If $f \in \cL$ is homogeneous and $\partial_i f \neq 0 $ then $\partial_i f $ is
homogeneous of degree $\deg(f)-1$.
If  $f,g \in \cL$  then $\partial_i(fg) =(\partial_if)g +  (s_if) \partial_i g$,
and if $\partial_i f = 0$, then
$\partial_i(fg) = f \partial_i g$.

For $i \in \PP$ the \emph{isobaric divided difference operator} $\pi_i : \cL \to \cL$ is defined by
\be\label{pi-i-eq}
\pi_i(f) \omdef= \partial_i(x_if) = f + x_{i+1}\partial_i f\qquad\text{for $f \in \cL$}.
\ee
Observe that $\pi_i f=f$ if and only if $s_i f=f$, in which case $\pi_i(fg) = f \pi_i(g)$ for
$g \in \cL$.
If
$f \in \cL$ is homogeneous  with   $\pi_i f \neq 0 $, then $\pi_i f $ is homogeneous of the same
degree. 
The operators $\partial_i$ and $\pi_i$ both satisfy the braid relations for $S_\infty$,
so we may define
$
\partial_w = \partial_{i_1}\partial_{i_2}\cdots \partial_{i_k}
$
and
$
\pi_w=\pi_{i_1}\pi_{i_2}\cdots \pi_{i_k}
$ for any $(s_{i_1},s_{i_2},\dots,s_{i_k}) \in \cR(w)$.
Moreover, one has $\partial_i^2=0$ and $\pi_i^2=\pi_i$ for all $i \in \PP$.

\subsection{Schubert polynomials and Stanley symmetric functions}\label{stab-sect}

Fix $n \in \PP$ and let $w_n\omdef =n\cdots321 \in S_n$ and $x^{\delta_n} \omdef = x_1^{n-1} x_2^{n-2}\cdots  x_{n-1}^1.$
The \emph{Schubert polynomial}  (see \cite{Knutson,Manivel})  of  $w \in S_n$ is the polynomial
\[\fkS_w \omdef= \partial_{w^{-1}w_n}x^{\delta_n} \in \cP.\]
This formula for $\fkS_w$ is independent of the choice of $n$
such that $w \in S_n$, and we consider
the Schubert polynomials to be a family indexed by $S_\infty$.
Since $\partial_i^2=0$, it follows that
\be\label{maineq}
\fkS_1=1
\qquand
\partial_i \fkS_w = \begin{cases} \fkS_{ws_i} &\text{if $s_i \in \DesR(w)$}
\\
0&
\text{if $s_i \notin \DesR(w)$}\end{cases}
\qquad\text{for each $i \in \PP$}.
\ee
Conversely,
one can show that
$\{ \fkS_w\}_{w \in S_\infty}$ is the unique family of homogeneous polynomials indexed by
$S_\infty$ satisfying \eqref{maineq}; see \cite[Theorem 2.3]{Knutson} or the introduction of \cite{BH}.
Each $ \fkS_w$ has degree $\ell(w)$,
and the polynomials $\fkS_w$ for $w \in S_\infty$ form a $\ZZ$-basis for $\cP$ 
\cite[Proposition 2.5.4]{Manivel}.

There is a useful formula for $\fkS_w$ as a sort of generating function over reduced words due to Billey, Jockusch, and Stanley \cite{BJS}.
Fix $w \in S_n$, and for each $a = (s_{a_1},s_{a_2},\dots,s_{a_k}) \in \cR(w)$, let $C(a)$ be the set of sequences  of positive integers
 $I = (i_1,i_2\dots,i_k) $ satisfying 
 \be\label{bjs-eq}
 i_1 \leq i_2 \leq \dots \leq i_k \qquand i_j < i_{j+1}\text{ whenever }a_j < a_{j+1}.\ee
 We write $I \leq a$ to indicate that  $i_j \leq a_j$ for all $j $ and define $x_I = x_{i_1} x_{i_2}\cdots x_{i_k}$.
The Schubert polynomial corresponding to $w \in S_n$ is then \cite[Theorem 1.1]{BJS}
\be\label{schub1-eq}
 \fkS_w = \sum_{a \in \cR(w)} \sum_{ \substack{ I \in C(a) \\ I \leq a} } x_I . \ee
For example, since $\cR(312) = \{(s_2,s_1)\}$ and $\cR(1342) = \{(s_2,s_3)\}$,
it holds that
\[ \fkS_{312}  = x_1^2 \qquand \fkS_{1342} =x_1x_2 + x_1x_3 + x_2x_3.\]
As expected, one has $\partial_1 \fkS_{312} = \partial_3 \fkS_{1342} = \fkS_{132} = x_1+x_2$.

Write 
 $\Lambda$ for the usual subring of
 bounded degree \emph{symmetric functions} in the ring of formal power series $\ZZ[[x_1,x_2,\dots]]$.
A sequence of power series $f_1,f_2,\dots $ has a limit $\lim_{n\to \infty} f_n\in \ZZ[[x_1,x_2,\dots]]$
  if the
 coefficient sequence of each fixed monomial is eventually constant.
For any map $w  : \ZZ \to \ZZ$ and $N\in \ZZ$, let $w \gg N : \ZZ \to \ZZ$ be the map
$i\mapsto w(i-N) +N$. 

\begin{definition}\label{F-def}
If $w \in S_\ZZ$ then the limit
\[
F_w \omdef= \lim_{N\to \infty} \fkS_{w\gg N}=
 \sum_{a \in \cR(w)} \sum_{ I \in C(a)} x_I  \in \ZZ[[x_1,x_2,\dots]]
 \]
  is the \emph{Stanley symmetric function} of $w$.
\end{definition}

The second equality in this definition follows from \eqref{schub1-eq}.
Stanley introduced these power series and proved that they are symmetric in \cite{Stan}. (The indexing conventions of \cite{Stan} differ from ours by the transformation of indices $w \mapsto w^{-1}$.)
The symmetric function $F_w$ is 
homogeneous
of degree $\ell(w)$, 
and the coefficient of any square-free monomial in $F_w$ is $|\cR(w)|$.
For example, 
\[ F_{321} = \sum_{i<j<k} 2x_ix_jx_k + \sum_{i<j} (x_i^2x_j + x_ix_j^2)\]
 and $|\cR(321)| = |\{ (s_1,s_2,s_1),(s_2,s_1,s_2)\}| = [x_1x_2x_3] F_{321} = 2$.


Definition~\ref{F-def} makes it clear
that $F_w = F_{w\gg N}$ for any $N \in \ZZ$,
but does not tell us how to efficiently compute these symmetric functions.
It is well-known result of Edelman and Greene \cite{EG} that each $F_w$ is Schur positive; for
a brief account of one way to compute the
corresponding Schur expansion, see \cite[\S4.2]{HMP4}.
We require one other definition of $F_w$.

\begin{lemma}[Macdonald \cite{Macdonald}] \label{stab-lem}
If $w \in S_\infty$ then $F_w = \lim_{n \to \infty} \pi_{w_n} \fkS_w$.
\end{lemma}

\begin{proof}
This is reproved in \cite[\S3]{HMP1}: the claim follows from \cite[Proposition 3.37 and Theorem 3.39]{HMP1}.
\end{proof}

\subsection{FPF-involution Schubert polynomials}\label{fpf-schub-sect}

For $n \in \PP$, let $\F_n$ be the set of
permutations 
$z \in S_n$ with $z=z^{-1}$ and $z(i)\neq i$ for all $i \in [n]$.
Let $\F_\infty$ and $\F_\ZZ$ be the $S_\infty$- and
$S_\ZZ$-conjugacy classes of the permutation $\wfpf:\ZZ \to \ZZ$ given by 
\be\label{wfpf-eq}
\wfpf : i \mapsto i - (-1)^i.
\ee
We refer to elements of $\F_n$, $\F_\infty$, and $\F_\ZZ$ as \emph{fixed-point-free (FPF) involutions}.
Note that
 $\F_n$ is empty if $n$ is odd.
For $z \in \F_\ZZ$ and $N \in \ZZ$, we see $z\gg N \in \F_\ZZ$ if and only if $N$ is even.
While technically $\F_n\not\subset \F_\infty$, there is a natural
inclusion
\be
\label{ifpf-def}
\ifpf: \F_n \hookrightarrow \F_\infty
\ee
given by the map
 that sends $z \in \F_n$ to the permutation of $\ZZ$ whose  restrictions to $[n]$ and to
 $\ZZ\setminus [n]$ coincide respectively with those of $z$ and $\wfpf$.
In symbols, we have 
$
\ifpf(z) = z \cdot \wfpf \cdot s_1 \cdot s_3 \cdot s_5\cdots s_{n-1}$.
Note we obtain $\wfpf_n = (1,2)(3,4) \dots (2n{-}1,2n)$ by restricting $\wfpf$ to $[2n]$.

We identify  elements  of $\F_n$, $\F_\infty$, or $\F_\ZZ$ with the complete
matchings on  $[n]$, $\PP$, or $\ZZ$ with
distinct vertices connected by an edge whenever they form a nontrivial cycle.
We depict such matchings with the vertices  on a horizontal axis, ordered from left to right,
and edges shown as convex curves in the upper half plane. For example,
\[
 (1,6)(2,7)(3,4)(5,8) \in \F_8
 \qquad
 \text{is represented as}
 \qquad
\arcstart{
*{.}  \arc{1.}{rrrrr}   & *{.}  \arc{1.}{rrrrr}   & *{.} \arc{.5}{r} & *{.} & *{.} \arc{.8}{rrr} & *{.} & *{.} & *{.}
}\endxy
\]
We will omit the numbers labeling the vertices in these matchings if they remain clear from context.

For each $z \in \F_\ZZ$, define
\be\label{inv-eq}
\ba
\inv(z) &= \{(i,j) \in \ZZ \times \ZZ: i < j, z(i) > z(j)\},
\\
 \Cyc_\ZZ(z) &= \{(i,j) \in \ZZ \times \ZZ: i < j = z(i)\},
 \ea
\ee
so that
$\DesR(z) = \{ s_i :(i,i+1) \in \inv(z)\}.
$
In turn let
$
\Cyc_\PP(z) = \Cyc_\ZZ(z) \cap (\PP\times \PP)
$.
The set
\be
\label{inv-fpf-eq}
\inv_\fpf(z) = \inv(z) - \Cyc_\ZZ(z)
\ee
 is finite with an even number of elements,
and is empty if and only if $z=\wfpf$.
We let
\be
\label{ell-fpf-eq}
\ellfpf(z) = \tfrac{1}{2}|\inv_\fpf(z)|
\qquand
\DesF(z) = \{ s_i  \in \DesR(z) : (i,i+1)\notin \Cyc_\ZZ(z)\}.
\ee
These definitions are related by the following proposition.

\begin{proposition} \label{fpfdes-prop}
If $z \in \F_\ZZ$
then
$
\ellfpf(szs)
=
\begin{cases}
\ellfpf(z)-1 &\text{if }s \in \DesF(z) \\
\ellfpf(z) &\text{if }s \in \DesR(z) -\DesF(z) \\
\ellfpf(z)+1&\text{if }s \in \{s_i : i \in \ZZ\} - \DesR(z).
\end{cases}
$
\end{proposition}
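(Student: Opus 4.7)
The plan is to prove the proposition by direct computation from the definition $\ellfpf(z) = \tfrac{1}{2}|\inv_\fpf(z)|$, where $\inv_\fpf(z) = \inv(z) \setminus \Cyc_\ZZ(z)$. The three cases will be handled separately, with the first being the main one.

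First, the middle case is trivial: if $s = s_i \in \DesR(z) \setminus \DesF(z)$, then by definition $(i,i+1) \in \Cyc_\ZZ(z)$, so $z(i) = i+1$, hence $z$ commutes with $s_i$ and $szs = z$. The third case (ascent) follows from the first by setting $z' = szs$, checking that $s \in \DesR(z')$ and $z'(i) \neq i+1$ (since otherwise $z(i) = i+1$, forcing $s \in \DesR(z)$, contradicting the ascent assumption), so $s \in \DesF(z')$; then Case 1 applied to $z'$ gives $\ellfpf(z) = \ellfpf(z') - 1$, i.e., $\ellfpf(szs) = \ellfpf(z) + 1$.

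For the first case, assume $s \in \DesF(z)$. Write $a = z(i)$ and $b = z(i+1)$, so that $a > b$ and $\{a,b\} \cap \{i,i+1\} = \emptyset$ (using that $z$ is fixed-point-free and $z(i) \neq i+1$). Conjugation by $s_i$ replaces the two $z$-cycles $(i,a)$ and $(i+1,b)$ with the $szs$-cycles $(i,b)$ and $(i+1,a)$, and leaves $z$ unchanged elsewhere. The goal is to show $|\inv_\fpf(z)| - |\inv_\fpf(szs)| = 2$.

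I partition inversions according to how many coordinates lie in the window $W = \{i,i+1,a,b\}$. Pairs with both coordinates outside $W$ contribute identically, since $z$ and $szs$ agree on $\ZZ \setminus W$ and $W$ is closed under both involutions. For pairs $(p,q)$ with exactly one coordinate in $W$: such pairs never lie in $\Cyc_\ZZ$ of either involution, and because $\{z(x) : x \in W\} = \{(szs)(x) : x \in W\} = W$ as multisets, a short check, splitting by the position of the outside coordinate $j$ relative to the elements of $W$, shows the inversion count is preserved. Finally, pairs entirely within $W$ require direct enumeration in the three subcases determined by the relative order of $a, b$ with respect to $i, i+1$, namely (A) $i+1 < b < a$, (B) $b < a < i$, and (C) $b < i$ and $i+1 < a$. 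In each subcase one lists the six ordered pairs in $W$, identifies those that are inversions and those in $\Cyc_\ZZ$, and checks that $z$ has exactly two more FPF inversions inside $W$ than $szs$.

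The main, admittedly modest, obstacle is the subcase analysis for pairs inside $W$ together with the balancing verification for pairs with exactly one coordinate in $W$. The argument is entirely mechanical, which is consistent with the authors' remark that the proof is elementary and may safely be omitted.
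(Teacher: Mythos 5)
Your proof is correct. The paper explicitly omits the proof of this proposition as elementary, and your direct computation with $\inv_\fpf(z)=\inv(z)-\Cyc_\ZZ(z)$ --- reducing everything to the descent case, then splitting pairs by how many coordinates lie in the $z$-invariant window $W=\{i,i+1,z(i),z(i+1)\}$ --- is exactly the kind of argument the authors intend; the only point worth making explicit in the mixed-pair check is that no integer (in particular neither the outside index $j$ nor its image $z(j)$) lies strictly between $i$ and $i+1$, which is what makes the contributions of $i$ and $i+1$ cancel.
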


\begin{proof}
If $s \in \DesR(z) - \DesF(z)$, we have $szs = z$.
When $s_i \in \DesF(z)$, we see $ z(i) > z(i+1) \neq i$ so $\inv_\fpf(z) = \inv_\fpf(szs) \cup \{(i,i+1), (z(i+1),z(i))\}$.
Then $\ellfpf(z) = \ellfpf(szs) + 1$.
Finally, if $s \notin \DesR(z)$, we see $szs$ satisfies the previous case so $\ellfpf(z) = \ellfpf(szs) - 1$.
\end{proof}

Define $\cAfpf(z)$ for $z \in \F_\ZZ$ as the set of permutations $w \in S_\ZZ$ of minimal length
with $z=w^{-1} \wfpf w$.
This set is nonempty and finite, and its elements all have length $\ellfpf(z)$.
We define 
\be\label{crfpf-eq}
\cRfpf(z) = \bigsqcup_{w \in \cAfpf(z)} \cR(w)
\ee
to be the set of \emph{(reduced) fixed-point-free involution words} for $z$.

\begin{definition} \label{fS-def}
The \emph{FPF-involution Schubert polynomial} of $z \in \F_\infty$ is
$ \Sfpf_z \omdef= \sum_{w \in \cAfpf(z)} \fkS_w.$ 
\end{definition}

For $z \in \F_n$,  we set $\cAfpf(z) = \cAfpf(\ifpf(z))$ 
and $\Sfpf_z = \Sfpf_{\ifpf(z)}$.

\begin{example}
We have
$\ifpf(4321) = s_1s_2\wfpf s_2 s_1 = s_3s_2\wfpf s_2s_3$
and
$\cAfpf(4321) = \{312,1342\},$
so
\[\Sfpf_{4321} =  \fkS_{312} +  \fkS_{1342} =x_1^2 + x_1 x_2 + x_1 x_3 + x_2 x_3.\]
\end{example}

The polynomials $\Sfpf_z$
have the following characterization
via divided differences.

\begin{theorem}[{\cite[Corollary 3.13]{HMP1}}]
\label{fpfthm}
The FPF-involution Schubert polynomials $\{ \Sfpf_z\}_{z \in \F_\infty}$
are the unique family of homogeneous polynomials indexed by $\F_\infty$
such that if $i \in \PP$ and $s=s_i$ then
\be\label{fpf-eq}
\Sfpf_{\wfpf} = 1
\qquand
\partial_i \Sfpf_z = \begin{cases}
\Sfpf_{szs} &\text{if $s \in \DesR(z)$ and $(i,i+1)\notin\Cyc_\ZZ(z)$} \\
0&\text{otherwise}.\end{cases}
\ee
\end{theorem}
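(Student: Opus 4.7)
The proof has two parts. For existence, first note that $\ellfpf(\wfpf) = 0$ forces $\cAfpf(\wfpf) = \{1\}$, so $\Sfpf_{\wfpf} = \fkS_1 = 1$. Applying $\partial_i$ termwise to the defining sum $\Sfpf_z = \sum_{w \in \cAfpf(z)} \fkS_w$ and invoking \eqref{maineq} gives
\[
\partial_i \Sfpf_z \;=\; \sum_{\substack{w \in \cAfpf(z) \\ s_i \in \DesR(w)}} \fkS_{ws_i},
\]
so the recursion reduces to the combinatorial claim that when $s_i \in \DesF(z)$ the map $w \mapsto ws_i$ is a bijection from $\{ w \in \cAfpf(z) : s_i \in \DesR(w)\}$ onto $\cAfpf(s_i z s_i)$, and that the left-hand side is empty in the other two cases.

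To establish the claim, I would take $w \in \cAfpf(z)$ with $s_i \in \DesR(w)$ and compute $(ws_i)^{-1}\wfpf(ws_i) = s_i z s_i$ together with $\ell(ws_i) = \ellfpf(z) - 1$. Comparing against the three possible values of $\ellfpf(s_i z s_i)$ in the preceding proposition, the minimality defining $\cAfpf(s_i z s_i)$ is contradicted unless $s_i \in \DesF(z)$, in which case $ws_i \in \cAfpf(s_i z s_i)$. For the reverse direction, any $w' \in \cAfpf(s_i z s_i)$ satisfies $(w's_i)^{-1}\wfpf(w's_i) = z$ with $\ell(w's_i) \leq \ell(w')+1 = \ellfpf(z)$; since this product represents $z$, equality holds, forcing $s_i \notin \DesR(w')$, i.e., $s_i \in \DesR(w's_i)$, and placing $w's_i$ in $\cAfpf(z)$. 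Injectivity is immediate because right multiplication by $s_i$ is a bijection of $S_\ZZ$.

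For uniqueness, suppose $\{f_z\}$ is another family satisfying the conditions, and induct on $\ellfpf(z)$. The base case $z = \wfpf$ holds by the initial condition; for $\ellfpf(z) > 0$, the recursion combined with the inductive hypothesis yields $\partial_i (f_z - \Sfpf_z) = 0$ for every $i \in \PP$. Consequently $f_z - \Sfpf_z \in \cP$ is invariant under every $s_i$, hence under all of $S_\infty$, so it must be a constant; being homogeneous of positive degree $\ellfpf(z)$, that constant is zero. The main technical obstacle is the length-and-bijection argument in the existence step, specifically organizing the three cases of the preceding proposition into a clean dichotomy for descents; the uniqueness step is essentially automatic once existence is in hand.
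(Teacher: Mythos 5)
Your argument is correct, and it is essentially the standard one: the paper itself does not prove Theorem~\ref{fpfthm} (it defers to \cite{HMP1}), but the proof there likewise reduces the recursion, via \eqref{maineq} applied termwise to $\Sfpf_z=\sum_{w\in\cAfpf(z)}\fkS_w$, to the claim that $w\mapsto ws_i$ bijects $\{w\in\cAfpf(z):s_i\in\DesR(w)\}$ onto $\cAfpf(s_izs_i)$ when $s_i\in\DesF(z)$ and that this set is empty otherwise, exactly as you deduce by comparing $\ell(ws_i)=\ellfpf(z)-1$ against the trichotomy for $\ellfpf(s_izs_i)$. One cosmetic remark on uniqueness: the hypothesis only says each $f_z$ is homogeneous, not that its degree is $\ellfpf(z)$; but since $f_z-\Sfpf_z$ is constant and $\Sfpf_z$ is nonzero homogeneous of positive degree, homogeneity of $f_z$ alone already forces that constant to be zero, so your conclusion stands.
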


Wyser and Yong first considered these polynomials in \cite{WY},
where they were denoted $\Upsilon_{z; (\GL_{n}, \Sp_n)}$.
They showed, when $n$ is even, that the FPF-involution Schubert polynomials
indexed by $\F_n$ are cohomology representatives for the
$\Sp_n(\CC)$-orbit closures in 
the flag variety 
 $\Fl(n) = \GL_n(\CC)/B$, with $B\subset \GL_n (\CC)$ denoting the
Borel subgroup of lower triangular matrices.
%
The symmetric functions $\Ffpf_z$
are related to the polynomials $\Sfpf_z$ by the following identity.

 \begin{definition} \label{fF-def}
The  \emph{FPF-involution Stanley symmetric function} of $z \in \F_\ZZ$
is the power series
\[\Ffpf_z \omdef= \sum_{w \in \cAfpf(z)} F_w = \lim_{N\to \infty} \Sfpf_{z \gg 2N} \in \Lambda.\]
\end{definition}

\begin{lemma}\label{istab-lem}
If $z \in \F_\infty$ then $\Ffpf_z = \lim_{n\to \infty} \pi_{w_n} \Sfpf_z$.
\end{lemma}

\begin{proof}
This is immediate from Lemma~\ref{stab-lem}.
\end{proof}


\subsection{Schur $P$-functions}
\label{ss:schur-p}

Our main results will relate  $\Ffpf_z$ to the \emph{Schur $P$-functions} in $\Lambda$,
which were introduced  in work of Schur \cite{Schur} and have since
arisen in a variety of other contexts (see, e.g., \cite{BH, J16, P2}).
Good references for these symmetric functions
include 
\cite[\S6]{Stem} and \cite[{\S}III.8]{Macdonald2}.
For integers  $0\leq m \leq n$, let
\be\label{Gmn-eq}
G_{m,n} \omdef= \prod_{i \in [m]} \prod_{j \in [n-i]} \(1 +x_i^{-1} x_{i+j}\) \in \cL.
\ee
For a partition $\lambda = (\lambda_1,\lambda_2,\dots)$, let $\ell(\lambda)$
denote the largest index $i \in \PP$ with $\lambda_i\neq 0$.
The partition $\lambda$ is \emph{strict} if $\lambda_i \neq \lambda_{i+1}$
for all $i < \ell(\lambda)$.
Define $x^\lambda = x_1^{\lambda_1}x_2^{\lambda_2}\cdots x_{\ell}^{\lambda_\ell}$
where $\ell = \ell(\lambda)$.

\begin{definition}\label{schurp-def}
Let $\lambda$ be a strict partition
with $\ell=\ell(\lambda)$ parts.
The power series
\[P_\lambda \omdef=  \lim_{n\to \infty} \pi_{w_n} \(x^\lambda G_{\ell,n}\) \in \Lambda\]
is then a well-defined,
homogeneous symmetric function of degree $\sum_i \lambda_i$,
which one calls the \emph{Schur $P$-function} of $\lambda$.
\end{definition}

We present this slightly unusual definition of $P_\lambda$ for its
compatibility with Definition~\ref{F-def}.
The symmetric functions $P_\lambda$ may be described more concretely as generating functions for certain shifted tableaux \cite[Ex. (8.16$'$), {\S}III.8]{Macdonald2}.
The equivalence of the two definitions is explained in~\cite[Example 1, \S{III.8}]{Macdonald2}.

Whereas the Schur functions form a $\ZZ$-basis for $\Lambda$, the Schur $P$-functions
form a $\ZZ$-basis for the subring  $\Gamma = \QQ[ p_1,p_3,p_5,\dots ] \cap \Lambda$
generated by the odd-indexed power sum symmetric functions \cite[Corollary 6.2(b)]{Stem}.
Sagan \cite{sagan1987shifted} and Worley \cite{worley1984theory} showed independently that each Schur $P$-function $P_\lambda$ is itself Schur positive.
For more information about the positivity properties of the symmetric functions, see the discussion of \cite[Eq.\ (8.17), {\S}III.8]{Macdonald2} in Macdonald's book.

\section{Transition formulas}\label{invtrans-sect}

The \emph{Bruhat order} $<$ on $S_\ZZ$ is the weakest partial order with $w<wt$ when $w \in S_\ZZ$
and $t \in S_\ZZ$ is a transposition such that $\ell(w) < \ell(wt)$.
We define the \emph{Bruhat order} $<$ on $\F_\ZZ$ as the weakest partial order
with $z < tzt$ when $z \in \F_\ZZ$ and $t \in S_\ZZ$ is a transposition such that
$\ellfpf(z) < \ellfpf(tzt)$.
Rains and Vazirani's  results in \cite{RainsVazirani}
imply the following
theorem from \cite{HMP3}. 
\begin{theorem}[{\cite[Theorem 4.6]{HMP3}}]
\label{fpfbruhat-thm}
Let $n \in 2\PP$.
The following properties hold:
\ben
\item[(a)] $(\F_\ZZ,<)$ is a graded poset with rank function $\ellfpf$.
\item[(b)] If $y,z \in \F_n$ then $y\leq z$  holds in $(S_\ZZ,<)$
 if and only if $\ifpf(y) \leq \ifpf(z)$  holds in $(\F_\ZZ,<)$.

\item[(c)] Fix $y,z \in \F_\ZZ$ and $w \in \cAfpf(z)$.
Then $y\leq z$ if and only if some $v \in \cAfpf(y)$ has $v \leq w$.

 \een
\end{theorem}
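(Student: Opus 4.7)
My plan is to derive all three parts from a single ``atom lifting'' lemma relating the Bruhat order on $\F_\ZZ$ to the one on $S_\ZZ$ via the atom sets $\cAfpf(z)$.

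First I would establish the atom lifting lemma: if $y, z \in \F_\ZZ$ and $y = tzt$ for a transposition $t$ with $\ellfpf(y) < \ellfpf(z)$, then for every $w \in \cAfpf(z)$ there exists $v \in \cAfpf(y)$ with $v \leq w$ in the Bruhat order on $S_\ZZ$; conversely, every $v \in \cAfpf(y)$ arises in this way from some $w \in \cAfpf(z)$. The proof requires a case analysis based on how $t = (i,j)$ interacts with the $2$-cycle structure of $z$ (whether $\{i,j\}$ is itself a $z$-cycle, whether $i$ and $j$ lie in distinct $z$-cycles, etc.). Proposition 2.4 controls length changes under simple conjugation, and a standard reduction on the distance $|i-j|$ lets us bootstrap the general transposition case from the simple case, where the minimality condition $\ell(w) = \ellfpf(z)$ defining atoms forces existence of the needed $v$.

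Given this lemma, part (c) follows by iteration. Write $y \leq z$ as a chain $y = z_0 < z_1 < \cdots < z_k = z$ of generating covers $z_i = t_i z_{i-1} t_i$ with $\ellfpf(z_{i-1}) < \ellfpf(z_i)$. Starting from the fixed $w_k = w$, use the lemma to descend through atoms $w_k \geq w_{k-1} \geq \cdots \geq w_0$ with $w_i \in \cAfpf(z_i)$; then $v := w_0$ satisfies $v \in \cAfpf(y)$ and $v \leq w$. For the converse direction, given $v \in \cAfpf(y)$ and $w \in \cAfpf(z)$ with $v \leq w$, use the subword property in $S_\ZZ$ to factor the relation into a chain of Bruhat covers $v = u_0 \lessdot u_1 \lessdot \cdots \lessdot u_m = w$, and apply the converse half of the lifting lemma to each cover to build a chain of generating moves from $y$ to $z$ in $\F_\ZZ$.

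Part (a) is then a consequence: if $y \lessdot z$ were a cover in $\F_\ZZ$ with $\ellfpf(z) - \ellfpf(y) > 1$, then (c) would give atoms $v \leq w$ with $\ell(w) - \ell(v) > 1$, and the subword property in $S_\ZZ$ produces an intermediate atom $u$ with $v < u < w$ whose associated involution $u^{-1}\wfpf u$ lies strictly between $y$ and $z$, contradicting the cover. Since $\ell(w) = \ellfpf(z)$ for $w \in \cAfpf(z)$, the rank function on $\F_\ZZ$ must be $\ellfpf$. For part (b), observe that when $z \in \F_n$, one can choose an atom $w \in \cAfpf(\ifpf(z))$ with $\supp(w) \subseteq [n-1]$, since a minimal-length conjugator only needs to act on the indices where $\ifpf(z)$ disagrees with $\wfpf$, all of which lie in $[n]$; Bruhat comparison of atoms then reduces to a comparison inside $S_n$, which is the restriction of Bruhat order on $S_\infty$, and (b) follows from (c).

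The main obstacle is the atom lifting lemma itself. The structure of $\cAfpf(z)$ depends delicately on the $z$-cycle interactions with $t$, so the transposition relating $w$ and $v$ (and the argument that $v$ is indeed of minimal length) must be handled separately in each configuration. Once this lemma is in place the remaining content of the theorem is essentially an order-theoretic consequence of the atom correspondence, which is why the statement is usually derived (as in \cite{RainsVazirani}) by first doing the technical combinatorics on atoms and then reading off the poset structure.
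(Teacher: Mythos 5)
The paper does not actually prove Theorem \ref{fpfbruhat-thm}; it quotes it from Rains--Vazirani and from \cite[Theorem 4.6]{HMP3}, so there is no internal argument to compare against and your proposal must be judged on its own. The overall architecture --- reduce everything to a lifting statement relating the generating relations of $(\F_\ZZ,<)$ to Bruhat relations among atoms --- is a reasonable plan, but two of the deductions you draw from that lemma do not go through as written. The serious gap is in the converse direction of (c), and the same defect infects your proof of (a). Given $v\in\cAfpf(y)$ and $w\in\cAfpf(z)$ with $v\leq w$, you factor the relation into Bruhat covers $v=u_0\lessdot u_1\lessdot\cdots\lessdot u_m=w$ in $S_\ZZ$ and apply the ``converse half'' of your lemma to each cover. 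But your lemma concerns the atom sets of a pair $y=tzt$; the intermediate permutations $u_i$ need not be atoms of the involutions $z_i=u_i^{-1}\wfpf u_i$ at all. One only knows $\ellfpf(z_i)\leq\ell(u_i)$, the sequence $\ellfpf(z_i)$ need not be monotone, and so the relations $z_i=t_iz_{i-1}t_i$ need not be generating relations of $(\F_\ZZ,<)$ pointing in the right direction. Concretely, $1\lessdot s_1\lessdot s_2s_1$ is a chain of Bruhat covers ending at the atom $s_2s_1\in\cAfpf(\ifpf(4321))$, yet the middle term satisfies $s_1^{-1}\wfpf s_1=\wfpf$ and has length $1>0=\ellfpf(\wfpf)$, so it is not an atom of anything. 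The same objection applies to the ``intermediate atom $u$'' you invoke for gradedness in (a): a permutation $u$ with $v<u<w$ gives an involution $u^{-1}\wfpf u$, but you have no control over whether $u$ is an atom of it, hence no way to place that involution strictly between $y$ and $z$ in $(\F_\ZZ,<)$. Repairing this requires a different induction --- e.g.\ the lifting (exchange) property for the simple-reflection action $z\mapsto szs$ on $\F_\ZZ$, together with the fact that $s\in\DesR(w)$ for $w\in\cAfpf(z)$ forces $ws\in\cAfpf(szs)$ with $\ellfpf(szs)=\ellfpf(z)-1$, which lets one peel letters off a reduced word of $w$ so that every intermediate stage \emph{is} an atom --- and this is precisely where the technical content of Rains--Vazirani lies.

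Separately, your argument for (b) does not address what (b) asserts. Part (b) compares $y$ and $z$ \emph{themselves}, as permutations, in the ordinary Bruhat order on $S_\ZZ$; it says nothing about their atoms. Choosing atoms supported in $[n-1]$ and comparing them inside $S_n$ merely reformulates the right-hand condition $\ifpf(y)\leq\ifpf(z)$ via part (c); it creates no link whatsoever to the left-hand condition $y\leq z$ in $(S_\ZZ,<)$. Statement (b) is essentially the nontrivial theorem that the order on fixed-point-free involutions generated by length-increasing conjugations coincides with the restriction of the full Bruhat order of the symmetric group to this conjugacy class (a Richardson--Springer/Incitti-type result), and it needs its own argument rather than being a formal consequence of (c).
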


Both $\ifpf(\F_n)$ and $\F_\infty$ are lower ideals in $(\F_\ZZ,<)$.
We write $y \lessdot_\F z$ for $y,z \in \F_\ZZ$ if  $ \{ w \in \F_\ZZ : y\leq w < z\}=\{y\}$.
If $y,z \in \F_n$ for some $n \in 2\PP$ and $\ifpf(y) \lessdot_\F \ifpf(z)$, then we write $y \lessdot_\F z$.
For example, 
 the set $\F_4$
  is totally ordered by $<$
 and we have
 \[\F_4 = \{ (1,2)(3,4) \lessdot_\F (1,3)(2,4) \lessdot_\F (1,4)(2,3) \}.\]

Let  $z \in \F_\ZZ$. Cycles
$(a,b),(i,j)\in \Cyc_\ZZ(z)$ with $a<i$
are \emph{crossing} if $a<i<b<j$ and \emph{nesting} if $a<i<j<b$.
One can check that $\ellfpf(z) = 2n+c$ where $n$ and $c$ are
the respective numbers of unordered
pairs of nesting and crossing cycles of $z$.
If $E\subset \ZZ$ has size $n \in \PP$ then we write 
$\phi_E$ and $\psi_E $
for the unique order-preserving bijections
$[n]\to E$ and $ E \to [n]$, and define 
\be
\label{st-eq}
[z]_E \omdef= \psi_{z(E)} \circ z \circ \phi_E \in S_n.
\ee
The operation $z \mapsto [z]_E$ is usually called \emph{standardization} or \emph{flattening}.


\begin{proposition}[{\cite[Corollary 2.3]{Bertiger}}]
\label{fpfbruhatcover-prop}
Let $y \in \F_\ZZ$. Fix integers $i<j$ and let $A = \{i,j,y(i),y(j)\}$ and $z=(i,j)y(i,j)$.
Then $\ellfpf(z) = \ellfpf(y)+1$ if and only if the following conditions hold:
\ben
\item[(a)]  One has $y(i)<y(j)$ but no $e \in \ZZ$ exists with  $i<e<j$ and $y(i) < y(e)< y(j)$.
\item[(b)] Either $[y]_A= (1,2)(3,4) \lessdot_\F [z]_A= (1,3)(2,4)$ or 
$[y]_A=(1,3)(2,4) \lessdot_\F [z]_A= (1,4)(2,3)$.

\een

\end{proposition}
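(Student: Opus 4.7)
The plan is to deduce the statement from Bertiger's classification of Bruhat covers for fixed-point-free involutions in \cite[Corollary 2.3]{Bertiger}; the work is mostly in translating that description into the explicit form of conditions (a) and (b).

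First I would observe that $z = (i,j)\hs y\hs (i,j)$ always lies in $\F_\ZZ$, and equals $y$ precisely when $y(i) = j$. In every other case the four numbers $i, j, y(i), y(j)$ are distinct, so $|A| = 4$, and a direct computation shows that passing from $y$ to $z$ replaces the cycles $(i, y(i))$ and $(j, y(j))$ of $y$ by the cycles $(i, y(j))$ and $(j, y(i))$ of $z$, while leaving all remaining cycles untouched. The difference $\ellfpf(z) - \ellfpf(y)$ therefore splits as a \emph{local term}, recording how the two cycles on $A$ interact with each other in $y$ versus $z$, plus a \emph{global term}, summing over the cycles of $y$ disjoint from $A$ the change in their weighted crossing/nesting contribution with the pair of cycles on $A$.

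Using the formula $\ellfpf(w) = 2n + c$ with $n$ and $c$ the numbers of unordered nesting and crossing pairs of cycles of $w$, a quick check in $\F_4$ shows $\ellfpf$ takes the values $0, 1, 2$ on $(1,2)(3,4), (1,3)(2,4), (1,4)(2,3)$, respectively. Hence the local term equals $\ellfpf([z]_A) - \ellfpf([y]_A)$, and this equals $+1$ precisely for the two transitions listed in (b) and $\pm 2$ for the transitions between $(1,2)(3,4)$ and $(1,4)(2,3)$. For the forward direction I would then show that hypothesis (a) forces the global term to vanish: for each cycle $(a, y(a))$ of $y$ with $\{a, y(a)\} \cap A = \emptyset$, a case analysis on the positions of $a$ and $y(a)$ relative to the four elements of $A$ shows that its total contribution to $2n + c$ via crossings and nestings with the cycles on $A$ is preserved by the swap. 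The ``no interfering $e$'' clause of (a) is precisely what rules out the handful of bad configurations (for instance $i < a < j$ with $y(i) < y(a) < y(j)$) in which this preservation would otherwise fail.

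The main obstacle is this case analysis for the global term, which is tedious but cut down by obvious symmetries; the only genuinely dangerous configurations are the ones excluded by (a). For the converse I would reverse each implication: failure of (a) produces an interfering $e$ that in turn yields an intermediate element $y < y' < z$ in $(\F_\ZZ, <)$, forcing $\ellfpf(z) \geq \ellfpf(y) + 2$; the inequality $y(i) > y(j)$ gives $\ellfpf(z) < \ellfpf(y)$ by applying the same argument with $y$ and $z$ exchanged; and the remaining case $[y]_A = (1,4)(2,3)$ is incompatible with $y(i) < y(j)$. Together these exhaust the ways in which (a) and (b) can fail.
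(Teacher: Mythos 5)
The paper gives no proof of this proposition at all: it is attributed to Bertiger and stated as being equivalent to \cite[Corollary 2.3]{Bertiger}. So your direct crossing/nesting argument is already doing more than the text, and its overall architecture --- writing $\ellfpf(z)-\ellfpf(y)$ as a local term $\ellfpf([z]_A)-\ellfpf([y]_A)$ plus a sum of per-cycle contributions, using $\ellfpf = 2n+c$ --- is sound and does lead to a proof. The forward direction as you describe it works: for each cycle disjoint from $A$ the weighted contribution is preserved exactly when the configurations excluded by (a) do not occur (one must remember that the interfering $e$ may be either endpoint of the offending cycle, e.g.\ a cycle $(a,b)$ with $a \in (y(i),y(j))$ and $b \in (y(j),j)$ is ruled out by $e=b$, not $e=a$).

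The converse, however, has a concrete gap in the case enumeration. Given $y(i)<y(j)$, the pair $([y]_A,[z]_A)$ is one of $((1,2)(3,4),(1,3)(2,4))$, $((1,3)(2,4),(1,4)(2,3))$, or $((1,2)(3,4),(1,4)(2,3))$; the configuration $[y]_A=(1,4)(2,3)$ forces $y(i)>y(j)$ and so never arises here. Thus the ``remaining case'' you dispose of is vacuous, and the case you actually need --- (a) holds but $[y]_A=(1,2)(3,4)$, $[z]_A=(1,4)(2,3)$, e.g.\ $y(i)<i<y(j)<j$ with no interfering $e$ (take $y=\wfpf$, $(i,j)=(2,4)$) --- is not addressed. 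It is handled by your own framework (local term $+2$, global term $\geq 0$), but your written converse does not invoke that. Separately, the intermediate-element argument for the interfering-$e$ case is shaky: the order on $\F_\ZZ$ is generated by single transposition conjugations, and the natural candidate $y'=(i,e)y(i,e)$ is not related to $z=(i,j)y(i,j)$ by one such conjugation, so $y'<z$ is not immediate. The cleaner route, consistent with the rest of your argument, is to observe that an interfering $e$ makes the cycle through $e$ contribute $+2$ to the global term while every other cycle contributes $\geq 0$, so $\ellfpf(z)\geq \ellfpf(y)+\text{local}+2 \geq \ellfpf(y)+3$ when $y(i)<y(j)$. With the converse reorganized as ``local $\in\{1,2\}$ and global $\geq 0$ whenever $y(i)<y(j)$, with global $=0$ iff no interfering $e$ and local $=1$ iff (b),'' the proof closes.
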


\begin{remark}
If condition (a) holds then $(i,j) \notin \Cyc_\ZZ(y)$ so  necessarily  $|A|=4$. 
Condition (b) asserts that $[y]_A \lessdot_\F [z]_A$, which occurs if and only if 
$[y]_A $ and $ [z]_A$ coincide with
\[ \arcstart
{
*{.}  \arc{.6}{r}   & *{.}   & *{.} \arc{.6}{r}  & *{.}
}
\arcstop
\ \lessdot_\F \
\arcstart
{
*{.}  \arc{.8}{rr}   & *{.} \arc{.8}{rr}   & *{.} & *{.}
}
\arcstop
\qquord
\arcstart
{
*{.}  \arc{.8}{rr}   & *{.} \arc{.8}{rr}   & *{.} & *{.}
}
\arcstop
\ \lessdot_\F \
\arcstart
{
*{.}  \arc{1.0}{rrr}   & *{.} \arc{.4}{r}   & *{.} & *{.}
}
\arcstop.\]
In the first case $[(i,j)]_A \in \{(1,4),(2,3)\}$, and in the second
$[(i,j)]_A \in \{(1,2),(3,4)\}$.
\end{remark}

Given $y \in \F_\ZZ$ and $r \in \ZZ$,
let
\be\label{Pfpf-def}
\ba
\Pfpf^+(y,r) &\omdef=
\left\{
z\in \F_\ZZ : \ellfpf(z) = \ellfpf(y)+1 \text{ and } z = (r,j)y(r,j)\text{ for an integer }j>r
\right\},
\\
\Pfpf^-(y,r) &\omdef=
\left\{
z \in \F_\ZZ : \ellfpf(z) = \ellfpf(y)+1 \text{ and } z =(i,r)y(i,r)\text{ for an integer }i<r
\right\}.
\ea
\ee
These sets are both nonempty,
and 
if $z$ belongs to either of them then $y \lessdot_\F z$.
We can now state the transition formula for FPF-involution Schubert polynomials.

\begin{theorem}[{\cite[Theorem 4.17]{HMP3}}] \label{monkfpf-thm}
If $y \in \F_\infty$ and $(p,q) \in \Cyc_\PP(y)$
then
\[
(x_p+x_q) \Sfpf_y = \sum_{ z \in \Pfpf^+(y,q)} \Sfpf_{z} -  \sum_{z \in \Pfpf^-(y,p)} \Sfpf_{z}
\]
where we
 set $\Sfpf_z = 0$ for all $z \in \F_\ZZ-\F_\infty$.
 \end{theorem}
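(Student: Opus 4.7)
The plan is to reduce this identity to the classical Monk rule for ordinary Schubert polynomials via the atom decomposition $\Sfpf_y = \sum_{w \in \cAfpf(y)} \fkS_w$ from Definition \ref{fS-def}. Monk's rule (subtracting the two standard forms $\fkS_w \fkS_{s_{r}} - \fkS_w \fkS_{s_{r-1}}$) gives, for each $w \in S_\infty$ and $r \in \PP$,
\[
x_r \hs \fkS_w \;=\; \sum_{\substack{j>r,\ \ell(w(r,j))=\ell(w)+1}} \fkS_{w(r,j)} \;-\; \sum_{\substack{i<r,\ \ell(w(i,r))=\ell(w)+1}} \fkS_{w(i,r)}.
\]
Summing this identity for $r=p$ and $r=q$ and then over all $w \in \cAfpf(y)$ expands $(x_p+x_q)\Sfpf_y$ as a signed sum of Schubert polynomials $\fkS_{wt}$ indexed by pairs $(w,t)$.

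The next step is to regroup this sum by the conjugated involution $\zeta_{w,t} := (wt)^{-1}\wfpf(wt) \in \F_\ZZ$. Using Proposition \ref{fpfbruhatcover-prop} and Theorem \ref{fpfbruhat-thm}(c), I would argue that the contributions from $t=(p,j)$ or $t=(q,j)$ with $j>q$ biject with positive sign onto $\bigsqcup_{z \in \Pfpf^+(y,q)} \cAfpf(z)$, while those from $t=(i,p)$ or $t=(i,q)$ with $i<p$ biject with negative sign onto $\bigsqcup_{z \in \Pfpf^-(y,p)} \cAfpf(z)$. Once these two bijections are established and all remaining pairs cancel, the theorem follows from $\Sfpf_z = \sum_{v \in \cAfpf(z)} \fkS_v$.

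The main obstacle is the cancellation of the ``off-target'' pairs $(w,t)$: those for which $\zeta_{w,t}$ is not a Bruhat-cover of $y$, or for which $wt$ fails to be of minimal length among conjugators of $\wfpf$ to $\zeta_{w,t}$. These must cancel across the $x_p$ and $x_q$ halves. The cleanest approach is a local case analysis on the pattern $[w]_A$ at the four-element window $A = \{p,\,q,\,w^{-1}(r),\,w^{-1}(r')\}$ containing the active indices; Proposition \ref{fpfbruhatcover-prop} classifies exactly which windows yield a cover in $(\F_\ZZ,<)$, and the remaining windows fall into pairs of opposite sign. The hypothesis $(p,q)\in \Cyc_\PP(y)$ -- equivalently $p<q=y(p)$ -- is precisely what forces a symmetry between the ``$p$-columns'' and ``$q$-columns'' of $w$ and produces these pairings. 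This is also why the natural Chevalley-like multiplier in this setting is the symmetric combination $x_p+x_q$ rather than $x_p$ or $x_q$ alone.

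As a consistency check and an alternative route, I would apply $\partial_i$ to both sides of the conjectured identity: expanding with the Leibniz rule $\partial_i(fg)=(\partial_i f)g + (s_if)\partial_i g$, substituting the divided-difference relations of Theorem \ref{fpfthm}, and invoking Theorem \ref{fpfbruhat-thm} reduces the statement to the same identity for the involution $s_iys_i$ of strictly smaller $\ellfpf$ (provided $s_i$ acts nontrivially on the involved cycle). Combined with a direct verification at the base case $y=\wfpf$, this yields an inductive proof by the uniqueness half of Theorem \ref{fpfthm} in the event that the direct atom-matching in the previous paragraph proves too combinatorially intricate.
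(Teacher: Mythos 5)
First, a point of orientation: the paper does not prove Theorem~\ref{monkfpf-thm} at all --- it imports it verbatim as \cite[Theorem 4.17]{HMP3} --- so there is no internal proof to compare against, and your argument has to stand on its own. As written it is a plan rather than a proof, because the two claims carrying all of the content are asserted rather than established. After summing Monk's rule over $w \in \cAfpf(y)$ you obtain a signed multiset of terms $\fkS_{wt}$ with $t$ ranging over $(p,j)$, $(i,p)$, $(q,j)$, $(i,q)$, and you must show (i) that the surviving terms are exactly $\bigsqcup_{z \in \Pfpf^+(y,q)} \cAfpf(z)$ with sign $+$ and $\bigsqcup_{z \in \Pfpf^-(y,p)} \cAfpf(z)$ with sign $-$, each atom occurring exactly once, and (ii) that everything else cancels. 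Neither follows from a local analysis of the four-element window $A$: whether $\ell(wt)=\ell(w)+1$, and whether $wt$ is genuinely an atom of $tyt$ rather than a longer conjugator, depend on all values of $w$ lying between the two swapped entries, not just on $[w]_A$ (compare the quantifier over all $e$ with $i<e<j$ in Proposition~\ref{fpfbruhatcover-prop}(a)). Executing (i) requires a precise description of how $\cAfpf(tyt)$ sits inside $\{wt : w \in \cAfpf(y)\}$ for a covering transposition $t$, which is exactly the atom-structure machinery of \cite{HMP2,HMP3} (cf.\ the order $<_{\cAfpf}$ and Lemma~\ref{minatomfpf-lem}) that you never invoke; and the cancellation (ii) is not a single pairing, since besides the $t=(p,q)$ term there are off-target terms $(p,j)$ and $(i,q)$ whose associated involutions $(p,j)y(p,j)$ and $(i,q)y(i,q)$ differ from the targets, so the matching must be exhibited, not just asserted to exist.

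Your fallback route has the same gap in a different place. Applying $\partial_i$ and Leibniz to the left-hand side is fine, but the inductive step needs the identity $\partial_i\bigl(\sum_{z \in \Pfpf^+(y,q)} \Sfpf_z - \sum_{z \in \Pfpf^-(y,p)} \Sfpf_z\bigr) = \sum_{z \in \Pfpf^+(s_iys_i,q')} \Sfpf_z - \sum_{z \in \Pfpf^-(s_iys_i,p')} \Sfpf_z$ for appropriate $p',q'$, i.e., a complete case analysis of how the sets $\Pfpf^{\pm}$ transform under conjugation by $s_i$ and of which of their members have $s_i$ as a usable descent in the sense of \eqref{fpf-eq}; that analysis \emph{is} the theorem. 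Note also that knowing $\partial_i f = \partial_i g$ for all $i$ only gives that $f-g$ is symmetric, so the appeal to the uniqueness half of Theorem~\ref{fpfthm} must be supplemented by a degree or leading-term argument before one can conclude $f=g$. Both skeletons are reasonable and could likely be fleshed out, but as written neither closes the argument, and the paper offers no shortcut: it simply defers to \cite{HMP3}.
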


\begin{example}
Set $\Pfpf^\pm(y,r) = \Pfpf^\pm(\ifpf(y),r)$ if $y \in \F_n$.
For $y = (1,2)(3,7)(4,5)(6,8) \in \F_8$, 
\[
\ba \Pfpf^+(y,7) 
&= \{ (7,8)y(7,8)\} = \{(1,2)(3,8)(4,5)(6,7)\}
\\
\Pfpf^-(y,3) &= \{ (2,3)y(2,3) \} = \{ (1,3)(2,7)(4,5)(6,8) \}
\ea
\]
so
$(x_3+x_7)\Sfpf_{(1,2)(3,7)(4,5)(6,8)} = \Sfpf_{(1,2)(3,8)(4,5)(6,7)}-\Sfpf_{(1,3)(2,7)(4,5)(6,8)}$.
\end{example}

Taking limits and invoking Definition~\ref{fF-def} gives the following identity in $\Lambda$.

 \begin{theorem}\label{monkfpf-thm2}
 If $y \in \F_\ZZ$ and $(p,q) \in \Cyc_\ZZ(y)$ then
 $\sum_{z \in \Pfpf^-(y,p)} \Ffpf_{z}  = \sum_{ z \in \Pfpf^+(y,q)} \Ffpf_{z}.$
\end{theorem}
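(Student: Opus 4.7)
The plan is to derive this symmetric function identity by taking the stable limit of the polynomial transition formula in Theorem~\ref{monkfpf-thm}. The key tool is Theorem-Definition~\ref{fF-def}, which tells us that $\Ffpf_z = \lim_{N \to \infty} \Sfpf_{z \gg 2N}$ for any $z \in \F_\ZZ$, where the limit is coefficient-wise on monomials.

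First I would check that the shift operation $z \mapsto z \gg 2N$ is compatible with the sets $\Pfpf^\pm$. Since shifting by an even integer is an automorphism of $\F_\ZZ$ that preserves both $\ellfpf$ and the conjugation by transpositions, one has $\Pfpf^+(y \gg 2N,\, q + 2N) = \{z \gg 2N : z \in \Pfpf^+(y,q)\}$, and similarly for $\Pfpf^-$. Moreover, for $N$ sufficiently large, $y \gg 2N$ and every element of $\Pfpf^\pm(y \gg 2N,\, \cdot)$ lies in $\F_\infty$ (their supports land in $\PP$), and the cycle $(p+2N,q+2N)$ lies in $\Cyc_\PP(y \gg 2N)$. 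This is the step that requires care but is essentially bookkeeping.

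Once this is set up, Theorem~\ref{monkfpf-thm} applied to $y \gg 2N$ and the cycle $(p+2N, q+2N)$ gives
\[
(x_{p+2N} + x_{q+2N})\, \Sfpf_{y \gg 2N} \;=\; \sum_{z \in \Pfpf^+(y,q)} \Sfpf_{z \gg 2N} \;-\; \sum_{z \in \Pfpf^-(y,p)} \Sfpf_{z \gg 2N}.
\]
Now I would let $N \to \infty$. The right-hand side converges coefficient-wise to $\sum_{z \in \Pfpf^+(y,q)} \Ffpf_z - \sum_{z \in \Pfpf^-(y,p)} \Ffpf_z$ by Theorem-Definition~\ref{fF-def}. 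For the left-hand side, fix any monomial $\mu$ in $x_1, x_2, \dots$; once $N$ is large enough that both $p+2N$ and $q+2N$ exceed the largest variable index appearing in $\mu$, the coefficient of $\mu$ in $(x_{p+2N} + x_{q+2N})\, \Sfpf_{y \gg 2N}$ is zero, because every monomial on the left involves the variable $x_{p+2N}$ or $x_{q+2N}$. Thus the left-hand side converges to $0$, which yields the desired identity.

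The main obstacle is not conceptual but the verification that the $\Pfpf^\pm$ constructions are equivariant under shifts and that the relevant shifts land in $\F_\infty$ so Theorem~\ref{monkfpf-thm} can be applied; the vanishing of the limit of the left-hand side is essentially automatic from the definition of the limit in the power series ring.
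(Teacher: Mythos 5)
Your proposal is correct and follows essentially the same route as the paper's proof: shift by $2N$, use the equivariance $\Pfpf^\pm(y\gg 2N, r+2N) = \{w\gg 2N : w \in \Pfpf^\pm(y,r)\}$ to apply Theorem~\ref{monkfpf-thm}, and observe that $(x_{p+2N}+x_{q+2N})\Sfpf_{y\gg 2N} \to 0$ coefficient-wise. The paper's proof is just a terser version of the same argument.
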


 \begin{proof}
We have $\Pfpf^\pm(y \gg 2N, r+2N) = \{ w \gg 2N : w \in \Pfpf^\pm(y,r)\}$
for  $y \in \F_\ZZ$ and $r,N \in \ZZ$,
so it follows that
$ \sum_{ z \in \Pfpf^+(y,q)} \Ffpf_{z} -  \sum_{z \in \Pfpf^-(y,p)} \Ffpf_{z} 
= \lim_{N\to \infty}  (x_{p+2N} + x_{q+2N}) \Sfpf_{y\gg 2N}  = 0$.
\end{proof}


\section{FPF-Grassmannian involutions}\label{fgrass-sect}

In this section we identify a class of ``Grassmannian'' elements of $\F_\ZZ$
for which $\Ffpf_z$ is a Schur $P$-function.
The \emph{(Rothe) diagram} of a permutation $w \in S_\infty$
is the set 
\be
\label{d-eq}
D(w) \omdef= \{ (i,j) \in \PP\times \PP : i < w^{-1}(j)\text{ and }j < w(i)\}.
\ee
Equivalently, $D(w) = \{ (i, w(j)) : (i,j) \in \inv(w)\}$
where 
\[\inv(w) \omdef= \{ (i,j) \in \ZZ \times \ZZ
: i<j \text{ and }w(i)>w(j)\}.
\] 
Following \cite[Section 3.2]{HMP1},
the \emph{(FPF-involution) diagram} of $z \in \F_\infty$ is the set
\be\label{dfpf-eq}
\Dfpf(z) \omdef= \{ (i,j) \in \PP\times \PP : j < i < z(j)\text{ and }j<z(i)\}.
\ee
One can check that $\Dfpf(z) = \{ (i,z(j))  : (i,j) \in \inv_\fpf(z),\ z(j)<i\}$. 

The \emph{code} of $w \in S_\infty$  is the 
sequence $c(w)=(c_1,c_2,c_3,\dots)$ where $c_i$ is the number of integers $j > i$ with
$w(i) > w(j)$.
The $i$th term of $c(w)$ is the number of
positions in the $i$th row of $D(w)$.
As in the introduction,
the \emph{(FPF-involution) code} of $z \in \F_\infty$ is the sequence
$\cfpf(z)= (c_1,c_2,\dots)$ in which $c_i$ is the number of positions
in the $i$th row of $\Dfpf(z)$, and the \emph{shape} of $z$ is the partition $\flambda(z)$ whose transpose is the partition that sorts $\cfpf(z)$.
For $z \in \F_n$ when $n \in 2\PP$, we define 
\[\Dfpf(z) \omdef= \Dfpf(\ifpf(z))
\qquand \cfpf(z) \omdef= \cfpf(\ifpf(z)).\]
Then $\Dfpf(z) $ is the subset of positions in $D(z)$ strictly below the diagonal.


The shifted shape of a strict partition $\mu$ is the set $\{ (i,i+j-1) \in \PP\times \PP : 1 \leq j \leq \mu_i \}$.
An involution $z $ in $\F_n$ or $\F_\infty$ is \emph{FPF-dominant} if
$\{ (i-1,j) : (i,j) \in \Dfpf(z)\}$ is
  the transpose of the shifted shape of a strict partition (which is necessarily $\flambda(z)$).
(We shift up since $\Dfpf(z)$
has no positions in row $i=1$.)
By contrast, a permutation is \emph{dominant} if it is merely 132-avoiding.

\begin{example}\label{fpf-dom-ex}
While $y = (1,8)(2,4)(3,5)(6,7) $ is FPF-dominant, $z = (1,3)(2,7)(4,8)(5,6)$ is not.
The corresponding diagrams are
   \begin{equation*}
    \Dfpf(y) = \begin{array}{cccccccc}
      \cdot & \cdot & \cdot & \cdot & \cdot & \cdot & \cdot & \times\\
      \circ  & \cdot & \cdot & \times & \cdot & \cdot & \cdot & \cdot\\
      \circ  & \circ & \cdot & \cdot & \times & \cdot & \cdot & \cdot\\
      \circ  & \times & \cdot & \cdot & \cdot & \cdot & \cdot & \cdot\\
      \circ  & \cdot & \times & \cdot & \cdot & \cdot & \cdot & \cdot\\
      \circ  & \cdot & \cdot & \cdot & \cdot & \cdot & \times & \cdot \\
      \circ  & \cdot & \cdot & \cdot & \cdot & \times & \cdot & \cdot\\
      \times  & \cdot & \cdot & \cdot & \cdot & \cdot & \cdot & \cdot
    \end{array} \quand 
    \Dfpf(z) = \begin{array}{cccccccc}
      \cdot & \cdot & \times & \cdot & \cdot & \cdot & \cdot & \cdot\\
      \circ  & \cdot & \cdot & \cdot & \cdot & \cdot & \times & \cdot\\
      \times  & \cdot & \cdot & \cdot & \cdot & \cdot & \cdot & \cdot\\
        \cdot  & \circ & \cdot & \cdot & \cdot & \cdot & \cdot & \times\\
      \cdot  & \circ & \cdot & \circ & \cdot & \times & \cdot & \cdot\\
      \cdot  & \circ & \cdot & \circ & \times & \cdot & \cdot & \cdot\\
      \cdot  & \times & \cdot & \cdot & \cdot & \cdot & \cdot & \cdot\\
      \cdot  & \cdot & \cdot & \times & \cdot & \cdot & \cdot & \cdot
    \end{array}
  \end{equation*}
where cells with $\circ$ are in $\Dfpf$,  $\times$ indicates a non-zero entry in the permutation matrix and $\cdot$ indicates a cell not in the diagram.
Observe that $\Dfpf$ consists of the positions
below the diagonal that are not weakly below any $\times$ and not weakly right of any $\times$.
The relevant codes are 
\[\cfpf(y) = (0,1,2,1,1,1,1,0)\qquand \cfpf(z) = (0,1,0,1,2,2,0,0),\]
and $\flambda(y) = (6,1)$ is the transpose of $(2,1,1,1,1,1)$.
The involution $y$ is not dominant (i.e. 132-avoiding) since in one-line notation $y=84523761$.
One can show that the only elements of $\F_{n}$ for $n \in \PP$ that are dominant in the classical sense are those of the
form $(1,n+1)(2,n+2)\cdots(n,2n)$. These involutions are all FPF-dominant.
\end{example}

The following generalizes \cite[Theorem~1.3]{HMP1}, which applies only when $z \in \F_n$ is dominant.
\begin{theorem}
\label{fpfdominant-thm}
If $z \in \F_\infty$ is FPF-dominant then
$
\Sfpf_z =  \prod_{(i,j) \in \Dfpf(z)} (x_i+x_j)
$.
\end{theorem}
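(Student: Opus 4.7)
My approach is induction on $\ellfpf(z) = |\Dfpf(z)|$, using the transition formula of Theorem \ref{monkfpf-thm}. The base case $\ellfpf(z) = 0$ forces $z = \wfpf$, so $\Dfpf(z) = \emptyset$ and both sides of the asserted identity equal $1$.

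For the inductive step, let $z \neq \wfpf$ be FPF-dominant, and let $\mu$ be the strict partition with $\ell := \ell(\mu)$ such that $\{(i-1,j):(i,j)\in \Dfpf(z)\}$ is the transpose of the shifted shape of $\mu$. I would select an inner corner of this shifted shape corresponding to the bottommost cell $(i, j) = (\ell + \mu_\ell,\, \ell)$ in the last occupied column of $\Dfpf(z)$. Removing this cell should yield the diagram of another FPF-dominant involution $y$ whose strict partition $\mu^-$ is obtained from $\mu$ by deleting the corresponding corner. A direct combinatorial analysis of the cycle structure of FPF-dominant involutions should further yield $y(j) = i$, so that $(j,i) \in \Cyc_\PP(y)$. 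Applying Theorem \ref{monkfpf-thm} with $(p,q) = (j,i)$ then gives
\[
(x_i + x_j)\,\Sfpf_y \;=\; \sum_{w \in \Pfpf^+(y, i)} \Sfpf_w \;-\; \sum_{w \in \Pfpf^-(y, j)} \Sfpf_w.
\]

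The crucial structural claim is that this sum collapses: $\Pfpf^+(y, i) \cap \F_\infty = \{z\}$, and $\Pfpf^-(y, j)$ makes no net contribution (either because its elements land in $\F_\ZZ \setminus \F_\infty$ where Schubert polynomials vanish by convention, or because the remaining terms cancel). Once this is established, the identity reduces to $(x_i + x_j)\Sfpf_y = \Sfpf_z$, and combining with the inductive hypothesis $\Sfpf_y = \prod_{(a,b) \in \Dfpf(y)}(x_a + x_b)$ and the equality $\Dfpf(z) = \Dfpf(y) \cup \{(i,j)\}$ yields
\[
\Sfpf_z \;=\; (x_i + x_j)\,\Sfpf_y \;=\; \prod_{(a,b)\in\Dfpf(z)}(x_a + x_b),
\]
completing the induction.

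The main obstacle will be proving the collapse of the transition sum. This requires a case-by-case analysis, via Proposition \ref{fpfbruhatcover-prop}, of the FPF-Bruhat covers of $y$ obtainable as $(i,k)y(i,k)$ with $k > i$ or $(m,j)y(m,j)$ with $m < j$, and showing that the rigidity of the FPF-dominant staircase pattern rules out all candidates for $\Pfpf^+(y,i)$ apart from $z$, while forcing each element of $\Pfpf^-(y,j)$ either to lie outside $\F_\infty$ or to participate in a cancelling pair. A secondary technical challenge is confirming the correspondence between inner corners of shifted shapes and cycles of the associated FPF-dominant involution (i.e., that $y(j) = i$ for the chosen cell), which requires a precise description of how strict partitions index FPF-dominant involutions.
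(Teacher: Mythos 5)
Your strategy---induction on $\ellfpf(z)$ via the transition formula of Theorem~\ref{monkfpf-thm}, peeling off the corner cell $(\ell+\mu_\ell,\ell)$ of the last occupied column of $\Dfpf(z)$---is genuinely different from what the paper does: the paper's ``proof'' is only a sketch deferring to \cite[Theorem 3.26]{HMP1}, whose argument for dominant elements of $\F_n$ predates the transition formula of \cite{HMP3} and does not use it. Your route appears viable; I checked the collapse on several shapes (e.g.\ $\mu=(2,1),(3,1),(5,1),(5,2),(3,2,1)$) and it holds each time. In fact the $\Pfpf^-$ side is cleaner than you allow for: no cancellation is ever needed. For the FPF-dominant $y$ of shape $\mu-e_\ell$ one has $y(k)>k+\mu_k\geq \ell+\mu_\ell=i$ for all $1\leq k<\ell$ (the quantities $k+\mu_k$ are weakly decreasing because $\mu$ is strict, and $(k+\mu_k,k)\in\Dfpf(y)$ forces $y(k)>k+\mu_k$), so condition (a) of Proposition~\ref{fpfbruhatcover-prop} fails for every positive $k<j$; and any element of $\Pfpf^-(y,j)$ arising from $k\leq 0$ sends the nonpositive integer $k$ to $i\geq 1$, hence lies in $\F_\ZZ\setminus\F_\infty$ where $\Sfpf$ vanishes by the convention of Theorem~\ref{monkfpf-thm}.

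That said, as written the proposal defers essentially all of its technical content, and three claims must actually be established before this is a proof. (1) Every strict partition $\mu$ is realized by a (necessarily unique, by Lemma~\ref{codef-lem}) FPF-dominant $z_\mu\in\F_\infty$, with cycle structure explicit enough to verify $y(\ell)=\ell+\mu_\ell$; this does not follow formally from knowing $\Dfpf(y)$, since $(\ell+\mu_\ell,\ell)\notin\Dfpf(y)$ could a priori also be caused by $y(\ell+\mu_\ell)<\ell$ with $y(\ell)$ large. (2) The identity $z=(i,k_0)\,y\,(i,k_0)$ for a specific $k_0>i$; note that $k_0$ is not always $i+1$ (for $\mu=(2,1)$ one has $y=(1,4)(2,3)$, $i=3$, and $z=(3,5)y(3,5)$, because $y(4)<y(3)$ kills the candidate $k=4$). (3) The uniqueness $\Pfpf^+(y,i)=\{z\}$, which requires ruling out every other $k>i$ via conditions (a) and (b) of Proposition~\ref{fpfbruhatcover-prop}; this is a genuine case analysis on the tail $y(i+1),y(i+2),\dots$ of the dominant involution, including the values where $y$ reverts to $\wfpf$. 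None of these steps looks like it will fail once $z_\mu$ is written down in closed form, but until they are carried out the argument is a plan rather than a proof, and the entire difficulty of the theorem is concentrated in exactly these steps.
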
 

\begin{proof}
  For $z' \in \F_n$ we defined $\Sfpf_{z'} = \Sfpf_{\iota(z')}$, so we may as well assume $z \in \F_n$ for some $n$. Since $z = w_n$ is dominant, by~\cite[Theorem~1.3]{HMP1} we have
  \begin{equation*}
    \Sfpf_{w_n} = \prod_{\substack{1 \leq i < j \leq n \\ i+j \leq n}} (x_i + x_j).
  \end{equation*}
  Now assume $z \neq w_n$, and induct downward on $\ellfpf(z)$. Let $j \in [n]$ be minimal such that $z(j) < n-j+1$. The choice of $j$ implies $z(j)+1 \notin \{z(1), z(2), \ldots, z(j)\}$, so $z(z(j)+1) \notin [j]$. Setting $s = s_{z(j)}$, this shows $s \notin \DesR(z)$ and hence $\ellfpf(szs) = \ellfpf(z)+1$ by Proposition~\ref{fpfdes-prop}. Given that $z < zs < szs$, it is not hard to check that
  \begin{equation} \label{eq:diagram-change}
    D(szs) = D(z) \sqcup \{(z(j), j), (j, z(j))\}.
  \end{equation}
   If $z(j) < j$, then the minimality of $j$ implies $j = z(z(j)) = n-z(j)+1$, a contradiction; hence $z(j) > j$, so  \eqref{eq:diagram-change} implies $\Dfpf(szs) = \Dfpf(z) \sqcup \{(z(j), j)\}.$
   For example, if our involution is $z = (1,8)(2,7)(3,5)(4,6)$, then $j = 3$ and the diagrams of $z$ and $szs$ are drawn as follows:
   \begin{equation*}
    \Dfpf(z) = \begin{array}{cccccccc}
      \cdot & \cdot & \cdot & \cdot & \cdot & \cdot & \cdot & \times\\
      \circ  & \cdot & \cdot & \cdot & \cdot & \cdot & \times & \cdot\\
      \circ  & \circ & \cdot & \cdot & \times & \cdot & \cdot & \cdot\\
      \circ  & \circ & \circ & \cdot & \cdot & \times & \cdot & \cdot\\
      \circ  & \circ & \times & \cdot & \cdot & \cdot & \cdot & \cdot\\
      \circ  & \circ & \cdot & \times & \cdot & \cdot & \cdot & \cdot\\
      \circ  & \times & \cdot & \cdot & \cdot & \cdot & \cdot & \cdot\\
      \times  & \cdot & \cdot & \cdot & \cdot & \cdot & \cdot & \cdot
    \end{array} \qquad 
    \Dfpf(szs) = \begin{array}{cccccccc}
      \cdot & \cdot & \cdot & \cdot & \cdot & \cdot & \cdot & \times\\
      \circ  & \cdot & \cdot & \cdot & \cdot & \cdot & \times & \cdot\\
      \circ  & \circ & \cdot & \cdot & \cdot & \times & \cdot & \cdot\\
      \circ  & \circ & \circ & \cdot & \times & \cdot & \cdot & \cdot\\
      \circ  & \circ & \circ & \times & \cdot & \cdot & \cdot & \cdot\\
      \circ  & \circ & \times & \cdot & \cdot & \cdot & \cdot & \cdot\\
      \circ  & \times & \cdot & \cdot & \cdot & \cdot & \cdot & \cdot\\
      \times  & \cdot & \cdot & \cdot & \cdot & \cdot & \cdot & \cdot
    \end{array}
  \end{equation*}
  On the left, $\times$ is a point of the form $(i, z(i))$ and $\circ$ indicates an element of $\Dfpf(z)$, i.e., a point above and left of a $\times$ and below the main diagonal.
The picture on the right follows the same conventions with $z$ replaced by $szs$.
  
   Let $\lambda = \flambda(z)$ be the shape of $z$. 
   Since $z(j) > j$ and $z(i) = n-i+1$ for $i < j$, drawing a picture makes clear that $\lambda_j = z(j)-j-1$ and $\lambda_i = n-2i$ for $i < j$. 
   The previous paragraph therefore shows that $szs$ is FPF-dominant with shape $\flambda(szs) = (\lambda_1, \cdots, \lambda_{j-1}, \lambda_j + 1, \lambda_{j+1}, \ldots)$. By induction,
   \begin{equation*}
   \Sfpf_{szs} = \prod_{(a,b) \in \Dfpf(szs)} (x_a + x_b) = (x_{z(j)} + x_j) \prod_{(a,b) \in \Dfpf(z)} (x_a + x_b).
   \end{equation*}
   We claim that $\prod_{(a,b) \in \Dfpf(z)} (x_a + x_b)$ is symmetric in the variables $x_{z(j)}$ and $x_{z(j)+1}$. First, $z(j) > j$ forces column $z(j)$ of $\Dfpf(z)$ to be empty, so any variable $x_{z(j)}$ or $x_{z(j)+1}$ 
   in the product
   comes from a factor $x_a + x_b$ with $(a,b) = (z(j), b) \in \Dfpf(z)$. 
   The inner corners of $\lambda$ (the cells rightmost in their row and bottommost in their column) appear in columns $n-1,n-2,\ldots,n-j+1,z(j)-1,\ldots$ from right to left. 
   Thus, since $z(j)-1 < z(j)< z(j)+1 \leq n-j+1$, columns $z(j)$ and $z(j)+1$ of $\lambda$ have the same length---in the figure above, these two columns appear (transposed) as rows $5$ and $6$ of $\Dfpf(z)$. This implies that $(z(j), b) \in \Dfpf(z)$ if and only if $(z(j)+1, b) \in \Dfpf(z)$, which proves the claim. Now
   \begin{align*}
    \Sfpf_z = \partial_{z(j)} \Sfpf_{szs} &= \partial_{z(j)} \left[(x_{z(j)} + x_j) \prod_{(a,b) \in \Dfpf(z)} (x_a + x_b)\right]    \\
    &= \partial_{z(j)} (x_{z(j)} + x_j) \prod_{(a,b) \in \Dfpf(z)} (x_a + x_b) \\
     &= \prod_{(a,b) \in \Dfpf(z)} (x_a + x_b). 
   \end{align*}
\end{proof}

The \emph{lexicographic order} on $S_\infty$ is the total order
induced by identifying $w \in S_\infty$ with its one-line representation $w(1)w(2)w(3)\cdots$.
For $z$ in $\F_n$ or $\F_\infty$, we let $\beta_{\min}(z)$ denote the  lexicographically minimal element of $\cAfpf(z)$.
The next lemma follows from  \cite[Theorem 6.22]{HMP2}.

\begin{lemma}\label{minatomfpf-lem}
Suppose $z \in \F_\infty$ and $\Cyc_\PP(z) = \{ (a_i,b_i) : i \in \PP\}$ where $a_1<a_2<\cdots$.
The lexicographically  minimal element $\beta_{\min}(z)\in \cAfpf(z)$  is the inverse of
the permutation  whose one-line representation is   $a_1b_1a_2b_2a_3b_3\cdots$.
\end{lemma}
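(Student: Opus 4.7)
Define $\beta \in S_\infty$ to be the permutation with $\beta(a_i) = 2i-1$ and $\beta(b_i) = 2i$ for all $i \in \PP$, and with $\beta(k) = k$ for $k \in \ZZ \setminus \PP$. This is well-defined in $S_\infty$ because $(a_i, b_i) = (2i-1, 2i)$ for all sufficiently large $i$, so $\beta$ fixes all but finitely many integers, and its inverse has one-line representation $a_1 b_1 a_2 b_2 \cdots$ as in the statement. The plan is to show (a) $\beta \in \cAfpf(z)$, and (b) $\beta$ is the lex-minimum of the a priori larger set $\cC(z) := \{v \in S_\infty : vzv^{-1} = \wfpf\}$. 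Since $\cAfpf(z) \subseteq \cC(z)$ and $\beta \in \cAfpf(z)$, (b) forces $\beta = \beta_{\min}(z)$.

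For (a), the conjugation condition $\beta z \beta^{-1} = \wfpf$ is immediate from how $\beta$ matches cycles. For the length, I would count inversions of $\beta^{-1}$ directly from its one-line representation $a_1 b_1 a_2 b_2 \cdots$. Odd--odd pairs give no inversion (since $a_1 < a_2 < \cdots$); odd-then-even pairs $(2i-1, 2j)$ with $i \leq j$ give no inversion (since $a_i \leq a_j < b_j$); even--even pairs $(2i,2j)$ with $i<j$ invert precisely when $b_i > b_j$, which together with $a_i < a_j$ characterizes nested cycle pairs; and even-then-odd pairs $(2i, 2j-1)$ with $i<j$ invert precisely when $b_i > a_j$, which happens iff the cycles $(a_i, b_i)$ and $(a_j, b_j)$ are nested or crossing. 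Tallying, each nesting contributes $2$, each crossing contributes $1$, and each parallel pair contributes $0$, yielding $\ell(\beta) = 2n + c = \ellfpf(z)$ and so $\beta \in \cAfpf(z)$.

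For (b), I would induct on $k$ to show that if $v \in \cC(z)$ and $v(p) = \beta(p)$ for all $p \in [k-1]$, then $v(k) \geq \beta(k)$, with equality forcing $v(k) = \beta(k)$ (so the induction propagates). If $z(k) > k$, so $k = a_i$ for some $i$, then by induction $v(a_{i'}) = 2i'-1$ for all $i' < i$ (these are precisely the $a_{i'}$ with $a_{i'} < k$), and the relation $vzv^{-1} = \wfpf$ forces $v(b_{i'}) = 2i'$ even when $b_{i'} > k$; thus the $\wfpf$-cycles $\{2i'-1, 2i'\}$ for $i' < i$ are already consumed, so the cycle $\{a_i, b_i\}$ under $v$ must hit some $\{2j-1, 2j\}$ with $j \geq i$, giving $v(k) \geq 2i-1 = \beta(k)$. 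If $z(k) < k$, so $k = b_i$, then $v(a_i) = 2i-1$ is already fixed, hence $v(k) = 2i = \beta(k)$. The main technical point is the bookkeeping of ``consumed cycles'' in the inductive step, which rests on the ordering $a_1 < a_2 < \cdots$: as we sweep positions left-to-right, we encounter each small end $a_{i'}$ before proceeding to $a_i$. A cleaner alternative path is to deduce the lemma directly from \cite[Theorem 6.22]{HMP2}, which gives the analogous lex-minimality characterization for atoms of arbitrary involutions, by restricting to the FPF setting.
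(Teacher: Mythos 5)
Your argument is correct, but it takes a genuinely different route from the paper: the paper does not prove this lemma at all, it simply records it as a consequence of \cite[Theorem 6.22]{HMP2} (the general characterization of the atom poset $<_{\cAfpf}$ and its minimum), which is the ``cleaner alternative path'' you mention at the end. Your version is a self-contained verification, and the two halves both check out. In (a), the conjugation identity $\beta z\beta^{-1}=\wfpf$ is immediate from matching the cycle $(a_i,b_i)$ to $(2i-1,2i)$, and your inversion tally of $a_1b_1a_2b_2\cdots$ (nestings counted once in the even--even positions and once in the even--odd positions, crossings once in the even--odd positions) gives $\ell(\beta)=2n+c$, which equals $\ellfpf(z)$ by the identity recorded in Section~\ref{invtrans-sect}; combined with the stated fact that all atoms have length $\ellfpf(z)$, this puts $\beta\in\cAfpf(z)$. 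In (b), the left-to-right sweep works because $a_1<a_2<\cdots$ guarantees that by the time you reach position $a_i$, the relation $v(z(x))=\wfpf(v(x))$ has already pinned $v(b_{i'})=2i'$ for all $i'<i$, so the values $1,\dots,2i-2$ are exhausted and $v(a_i)\ge 2i-1$; this even establishes the slightly stronger statement that $\beta$ is lexicographically minimal among \emph{all} $v\in S_\infty$ conjugating $z$ to $\wfpf$, not just the minimal-length ones. What your approach buys is independence from the machinery of \cite{HMP2}; what the citation buys is brevity and consistency with the order-theoretic description of $\cAfpf(z)$ used again in Section~\ref{fpf-tri-sect}.
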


The same statement with  ``$a_1b_1a_2b_2\cdots$'' replaced by
``$a_1b_1a_2b_2\cdots a_n b_n$'' holds if $z \in \F_{2n}$.

\begin{example}
If $z =(1,4)(2,3) \in \F_4$ then $a_1b_1a_2b_2 = 1423$ and
$\beta_{\min}(z)=1423^{-1} = 1342$.
\end{example}

Typically $\Dfpf(z) \neq D(\beta_{\min}(z))$, but the analogous statement holds for codes.

\begin{lemma}[{\cite[Lemma 3.8]{HMP1}}]
\label{codef-lem}
If $z \in \F_\infty$ then 
 $\cfpf(z) = c(\beta_{\min}(z))$.
 \end{lemma}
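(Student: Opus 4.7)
The plan is to prove $\cfpf(z) = c(\beta_{\min}(z))$ entry-by-entry, leveraging the explicit description of $\beta_{\min}(z)$ from Lemma \ref{minatomfpf-lem}. Write $\Cyc_\PP(z) = \{(a_k, b_k) : k \in \PP\}$ with $a_1 < a_2 < \cdots$ and $a_k < b_k$, and set $w = \beta_{\min}(z)$. Then $w(a_k) = 2k-1$ and $w(b_k) = 2k$, because $w^{-1}$ has one-line representation $a_1 b_1 a_2 b_2 \cdots$. Since $z$ has no fixed points in $\PP$, every $i \in \PP$ equals $a_k$ or $b_k$ for a unique $k$, and it suffices to establish $c_i(w) = \cfpf_i(z)$ in each of these two cases.

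First I would compute $c_i(w) = |\{j > i : w(j) < w(i)\}|$ using the cycle data. For $i = a_k$, we have $w(i) = 2k-1$, and the preimage under $w$ of $\{1,\dots,2k-2\}$ is exactly $\{a_\ell, b_\ell : \ell < k\}$. Since $a_\ell < a_k$ whenever $\ell < k$, the condition $j > a_k$ eliminates the $a_\ell$'s, yielding $c_{a_k}(w) = |\{\ell < k : b_\ell > a_k\}|$. The analogous computation for $i = b_k$ now also excludes $a_k$ itself from the preimage (since $a_k < b_k$), so $c_{b_k}(w) = |\{\ell < k : b_\ell > b_k\}|$.

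Next I would compute $\cfpf_i(z) = |\{j < i : i < z(j) \text{ and } j < z(i)\}|$ by cases. For $i = a_k$, the constraint $j < z(i) = b_k$ is automatic from $j < a_k$, so we need $j < a_k$ with $z(j) > a_k$. Any such $j$ is either $a_\ell$ for some $\ell < k$, with $z(j) = b_\ell$, or $b_\ell$ with $b_\ell < a_k$; in the latter case $b_\ell < a_k$ forces $\ell < k$ and hence $z(j) = a_\ell < a_k$, which fails $z(j) > a_k$. Thus $\cfpf_{a_k}(z) = |\{\ell < k : b_\ell > a_k\}|$, matching $c_{a_k}(w)$. For $i = b_k$, the constraint $j < a_k = z(i)$ is the binding one, and the same dichotomy for $j$ shows only $j = a_\ell$ with $\ell < k$ and $z(a_\ell) = b_\ell > b_k$ can contribute, giving $\cfpf_{b_k}(z) = |\{\ell < k : b_\ell > b_k\}| = c_{b_k}(w)$.

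There is no serious obstacle here: the argument is a direct unpacking of definitions, with Lemma \ref{minatomfpf-lem} doing essentially all the work by converting the cycle structure of $z$ into the one-line description of $w = \beta_{\min}(z)$. The one subtlety to keep in mind is that when ruling out contributions from the case $j = b_\ell$ (in the computation of $\cfpf_i(z)$), one must observe that $b_\ell < a_k$ forces $\ell < k$ via $a_\ell < b_\ell < a_k$, which is what makes the argument symmetric for the two cases $i = a_k$ and $i = b_k$.
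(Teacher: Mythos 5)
Your proof is correct. Note that the paper does not actually prove this lemma: it simply cites \cite[Lemma 3.8]{HMP1}, so there is no internal argument to compare against. Your direct verification is a perfectly good self-contained substitute: you use Lemma~\ref{minatomfpf-lem} to write $w=\beta_{\min}(z)$ explicitly as $w(a_k)=2k-1$, $w(b_k)=2k$, compute $c_i(w)=|\{j>i: w(j)<w(i)\}|$ and $\cfpf_i(z)=|\{j<i : i<z(j),\ j<z(i)\}|$ separately in the two cases $i=a_k$ and $i=b_k$, and observe that both reduce to $|\{\ell<k : b_\ell > a_k\}|$ and $|\{\ell<k : b_\ell>b_k\}|$ respectively. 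All the case analysis checks out, including the key observation that a potential contributor $j=b_\ell$ with $b_\ell<a_k$ forces $\ell<k$ and hence $z(j)=a_\ell<a_k$, so it never contributes to $\cfpf_i(z)$; and on the code side, that the preimage of $\{1,\dots,2k-2\}$ (resp.\ $\{1,\dots,2k-1\}$) under $w$ lying to the right of $a_k$ (resp.\ $b_k$) consists exactly of the $b_\ell$ with $\ell<k$ exceeding $a_k$ (resp.\ $b_k$). The only mild point worth making explicit is that for $z\in\F_\infty$ the cycles in $\Cyc_\PP(z)$ partition $\PP$ and that all inversions of $w\in S_\infty$ have both coordinates in $\PP$, so the two quantities being compared really are supported on the same index set; you implicitly use both facts and both are immediate from the definitions.
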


A pair $(i,j) \in \ZZ\times \ZZ$ is an \emph{FPF-visible inversion} of $z \in \F_\ZZ$
if $i<j$ and $z(j) < \min\{i,z(i)\}$.
These are precisely the involutions corresponding to the cells of $\Dfpf(z)$, as noted in Definition~\ref{d:diag}.

\begin{lemma}\label{desf-lem0}
The set of FPF-visible inversions of $z \in \F_\infty$ is $\inv(\beta_{\min}(z))$.
\end{lemma}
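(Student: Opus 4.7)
The plan is to compute both sides explicitly using the description of $w := \beta_{\min}(z)$ provided by Lemma~\ref{minatomfpf-lem}. Writing $\Cyc_\PP(z) = \{(a_k,b_k):k\in\PP\}$ with $a_1<a_2<\cdots$ and $a_k<b_k$ for all $k$, that lemma characterizes $w$ by the formulas $w(a_k)=2k-1$ and $w(b_k)=2k$. Since $\PP$ is partitioned by $\{a_k\}\sqcup\{b_k\}$, this determines $w$ entirely and lets me rewrite $(i,j)\in\inv(w)$ as an arithmetic statement about the indices $k,l$ and the values $a_k, a_l, b_k, b_l$.

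For a pair $i<j$ the first step is to classify it by which half of its $z$-cycle each coordinate lies in, giving four subcases: $(i,j)$ equals $(a_k,a_l)$, $(a_k,b_l)$, $(b_k,a_l)$, or $(b_k,b_l)$. In each subcase I translate the FPF-visibility conditions $z(j)<i$ and $z(j)<z(i)$ into inequalities among $\{a_k,a_l,b_k,b_l\}$, and translate $(i,j)\in\inv(w)$ using $w(i),w(j)\in\{2k-1,2k,2l-1,2l\}$. The expected outcome is that the $(a_k,b_l)$ and $(b_k,b_l)$ subcases each reduce on both sides to the same condition, namely $l<k$ together with $a_k<b_l$ or $b_k<b_l$ respectively, while the $(a_k,a_l)$ and $(b_k,a_l)$ subcases are vacuous on both sides.

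The key leverage comes from the monotonicity $a_1<a_2<\cdots$ together with the cycle convention $a_k<b_k$. In the $(a_k,a_l)$ subcase, $i<j$ forces $k<l$, which immediately contradicts both $z(j)=b_l<a_k$ (since $a_k<a_l<b_l$) and $w(i)=2k-1<2l-1=w(j)$. The $(b_k,a_l)$ subcase is the only place any real care is needed: FPF-visibility requires $b_l<a_k$ and $b_k<a_l$, but combined with $a_l<b_l$ and $a_k<b_k$ this gives $a_l<a_k$, i.e.\ $l<k$, while $b_k<a_l$ and $a_k<b_k$ give $a_k<a_l$, contradiction; on the inversion side $w(i)=2k>2l-1=w(j)$ forces $k\geq l$, and either $k=l$ (which makes $b_k<a_k$, contradicting $a_k<b_k$) or $k>l$ (which runs into the same $a$-monotonicity contradiction from $b_k<a_l$).

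Beyond this bookkeeping there is no genuine obstacle; the main subtlety is simply avoiding the false positives in the $(b_k,a_l)$ case by carefully invoking both $a_k<b_k$ and the strict monotonicity of $(a_k)_{k\in\PP}$. Once the four subcases are laid out, the equality of the two sets of inequalities is immediate, and the lemma follows.
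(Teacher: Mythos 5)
Your proof is correct and takes essentially the same route as the paper's: both arguments reduce the statement to the explicit description of $\beta_{\min}(z)$ from Lemma~\ref{minatomfpf-lem} (equivalently, $w(a_k)=2k-1$, $w(b_k)=2k$) and then check directly that the defining conditions for the two sets of pairs coincide. Your four-subcase bookkeeping is just a more exhaustive organization of the paper's two-direction verification, so there is nothing further to add.
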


\begin{proof}
Suppose $(i,j) \in \ZZ \times \ZZ$ is an FPF-visible inversion of $z \in F_\infty$.
Either $z(j)<i <z(i)$ or $z(j) < z(i) < i$, and in both cases $j$ appears before $i$
in the one-line representation of $\beta_{\min}(z)^{-1}$ so $(i,j) \in \inv(\beta_{\min}(z))$.
Since $|\inv(\beta_{\min(z)})| = \ellfpf(z) = |\Dfpf(z)|$, this completes our proof. \end{proof}

If $(i,i+1)$
is an FPF-visible inversion of $z \in \F_\ZZ$,
then $i \in \ZZ$ is an \emph{FPF-visible descent}.
Let 
\be
\label{desif-eq}
\DesIF(z) \omdef = \{ s_i : i \in \ZZ \text{ is an FPF-visible descent of $z$}\} \subset \DesF(z).
\ee
Since  $s_i \in \DesR(w)$ for $w \in S_\ZZ$ if and only if $(i,i+1) \in \inv(w)$,
the following is immediate.

\begin{lemma}\label{desf-lem}
If $z \in \F_\infty$ then  $\DesIF(z) = \DesR(\beta_{\min}(z))$.
\end{lemma}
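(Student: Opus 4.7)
The plan is to deduce this immediately from Lemma \ref{desf-lem0}, which identifies the FPF-visible inversions of $z$ with $\inv(\beta_{\min}(z))$. Indeed, by the definition recalled just before Lemma \ref{desf-lem}, $s_i \in \DesIF(z)$ means precisely that $(i,i+1)$ is an FPF-visible inversion of $z$. Lemma \ref{desf-lem0} then says this happens if and only if $(i,i+1) \in \inv(\beta_{\min}(z))$. Finally, by the standard fact recalled in the sentence preceding the lemma, $(i,i+1) \in \inv(w)$ if and only if $s_i \in \DesR(w)$. Specializing to $w = \beta_{\min}(z)$ gives the desired equality.

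So the whole proof is a three-line chain of equivalences, with no real calculation to perform. There is no substantive obstacle — the content of the lemma is packaged entirely inside the previous one. The only thing to be slightly careful about is that $\beta_{\min}(z)$ lies in $S_\infty \subset S_\ZZ$, so the two notions of descent (for permutations of $\PP$ and for permutations of $\ZZ$) agree on it; this is automatic from our conventions. Thus the proof should read, essentially, \emph{``This is immediate from Lemma~\ref{desf-lem0}, since by definition $s_i \in \DesIF(z)$ iff $(i,i+1)$ is an FPF-visible inversion of $z$, and $s_i \in \DesR(\beta_{\min}(z))$ iff $(i,i+1) \in \inv(\beta_{\min}(z))$.''}
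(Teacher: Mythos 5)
Your proof is correct and is exactly the paper's argument: the paper also deduces the lemma immediately from Lemma~\ref{desf-lem0} together with the observation that $s_i \in \DesR(w)$ if and only if $(i,i+1) \in \inv(w)$.
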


The \emph{essential set}  of a subset $D\subset \PP\times \PP$ is  the set $\Ess(D)$ of
positions $(i,j) \in D$ such that $(i+1,j) \notin D $
and $(i,j+1) \notin D$.
The following is similar to \cite[Lemma 4.14]{HMP4}.

\begin{lemma}\label{desf-lem2} 
For $z \in \F_\infty$, the $i$th row of $\Ess(\Dfpf(z))$ is nonempty
if and only if $s_i \in \DesIF(z)$.
\end{lemma}

\begin{proof}
If $s_i \in \DesIF(z)$ then $(i,z(i+1)) \in \Dfpf(z)$
but all positions of the form $(i+1,j) \in \Dfpf(z)$ have $j < z(i+1)$,
so the $i$th row of $\Ess(\Dfpf(z))$ is nonempty.
Conversely, if the $i$th row of this set is nonempty, 
then there is some $(i,j) \in \Dfpf(z)$ with $(i+1,j) \notin \Dfpf(z)$.
This holds only if $j=z(k)$ for some $k>i$ with $z(i) > z(k)$
and $i > z(k) \geq z(i+1)$, in which case $s_i \in \DesIF(z)$.
\end{proof}

A permutation $w \in S_\infty$ is \emph{$n$-Grassmannian} if $\DesR(w) = \{s_n\}$.

\begin{proposition}
\label{p:fpf-grass}
For $z \in \F_\infty$ and $n \in \PP$,
the following are equivalent:
\ben
\item[(a)] $\DesIF(z) = \{ s_n\}$.

\item[(b)] $\cfpf(z)$ has the form $(0,c_2,\dots,c_n,0,0,\dots)$ where
$ c_2 \leq \cdots \leq c_n\neq0$.

\item[(c)] $ \Ess(\Dfpf(z))$ is nonempty and contained in $\{ (n,j) : j\in \PP\}$.

\item[(d)] The lexicographically minimal atom $\beta_{\min}(z) \in \cAfpf(z)$ is $n$-Grassmannian.
\een
\end{proposition}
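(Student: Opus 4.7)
The plan is to prove the four conditions are equivalent by establishing (a) $\iff$ (d), (b) $\iff$ (d), and (c) $\iff$ (a). The first two equivalences will follow directly from the identifications $\DesIF(z) = \DesR(\beta_{\min}(z))$ (Lemma~\ref{desf-lem}) and $\cfpf(z) = c(\beta_{\min}(z))$ (Lemma~\ref{codef-lem}), combined with the standard characterization of an $n$-Grassmannian permutation by its code (namely, $w$ is $n$-Grassmannian iff $c(w) = (c_1, c_2, \ldots, c_n, 0, 0, \ldots)$ with $c_1 \leq c_2 \leq \cdots \leq c_n$ and $c_n > 0$). For this I would also use the observation that $c_1 = 0$ for every $z \in \F_\infty$, which is automatic from the definition since the first row of $\Dfpf(z)$ is empty: the condition $j < i = 1$ has no solutions in $\PP$. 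Equivalently, Lemma~\ref{minatomfpf-lem} shows $\beta_{\min}(z)(1) = 1$ because the smallest entry in the sequence $a_1 b_1 a_2 b_2 \cdots$ is $a_1 = 1$.

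For (c) $\iff$ (a), the key step would be to prove the following claim: \emph{for each $i \in \PP$, row $i$ of $\Dfpf(z)$ contains an essential cell if and only if $s_i \in \DesIF(z)$.} Given this, (c) becomes the statement that $n$ is the unique row containing an essential cell and such a cell exists, which is precisely $\DesIF(z) = \{s_n\}$, i.e., (a).

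To prove the claim I would first establish the following concrete observation: if $(i,j) \in \Dfpf(z)$, then $(i+1, j) \notin \Dfpf(z)$ if and only if $z(i+1) \leq j$. This comes from unraveling the defining inequalities of $\Dfpf(z)$: the two obstructions to $(i+1,j) \in \Dfpf(z)$ are $z(j) = i+1$ and $z(i+1) \leq j$, and the involution property forces the first to also imply $z(i+1) = j$, so the cases collapse into the single condition. Using this, $(\Rightarrow)$ is immediate: if $(i,j) \in \Ess(\Dfpf(z))$, then $(i+1, j) \notin \Dfpf(z)$ yields $z(i+1) \leq j < i$ and $z(i+1) \leq j < z(i)$, proving $s_i \in \DesIF(z)$. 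For $(\Leftarrow)$, set $j_0 := z(i+1)$ and verify $(i, j_0) \in \Dfpf(z)$ directly, using $z(j_0) = i+1$ and the hypothesis $j_0 < \min(i, z(i))$. The observation then gives $(i+1, j_0+k) \notin \Dfpf(z)$ for every $k \geq 0$, so whenever $(i, j_0+k)$ fails to be essential the cell $(i, j_0+k+1)$ must lie in $\Dfpf(z)$; finiteness of $\Dfpf(z)$ forces this iteration to terminate at an essential cell in row $i$.

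The main obstacle is the concrete observation collapsing the two obstructions to $(i+1,j) \in \Dfpf(z)$ into the single inequality $z(i+1) \leq j$; all the FPF-specific subtlety is concentrated there. Once that is in hand, the iteration argument and the remaining translations through Lemmas~\ref{desf-lem} and \ref{codef-lem} are routine.
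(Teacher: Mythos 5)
Your proof follows the same route as the paper's: (a) $\Leftrightarrow$ (d) via Lemma~\ref{desf-lem}, (b) $\Leftrightarrow$ (d) via Lemma~\ref{codef-lem} together with the code characterization of $n$-Grassmannian permutations, and (a) $\Leftrightarrow$ (c) via the claim that row $i$ of $\Ess(\Dfpf(z))$ is nonempty if and only if $s_i \in \DesIF(z)$. The only difference is that the paper dismisses this last claim as an ``elementary exercise'' and omits the details, whereas you supply them, and your argument (collapsing the two obstructions to $(i+1,j)\in\Dfpf(z)$ into the single inequality $z(i+1)\le j$ using the involution property, then iterating rightward in row $i$ and invoking finiteness of $\Dfpf(z)$) is correct.
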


\begin{proof}
We have (a) $\Leftrightarrow$ (d) by Lemma~\ref{desf-lem} and (a) $\Leftrightarrow$ (c) by Lemma~\ref{desf-lem2}.
Finally, Lemma~\ref{codef-lem} implies that (b) $\Leftrightarrow$ (d)
since $w \in S_\infty$ is $n$-Grassmannian
if and only if the first $n$ terms of $c(w)$ are weakly increasing and 
the remaining entries are 0.
\end{proof}

The preceding conditions
suggest a natural concept of a ``Grassmannian'' fixed-point-free
involution, but
this definition turns out to be slightly too restrictive.
Define  $\I_\ZZ \omdef= \{ w \in S_\ZZ : w=w^{-1}\}$.
Consider the  maps
$
\Fmap : \I_\ZZ \to \F_\ZZ $
and
$ \Imap : \F_\ZZ \to \I_\ZZ
$ given as follows.

\begin{definition}\label{Fmap-def}
For $y \in \I_\ZZ$, let $m$ be any even integer
with $m < i $ for all $i \in \supp(y)$,
write $\phi$ for the order-preserving bijection $\ZZ \to \ZZ\setminus \supp(y)$ with $\phi(0) = m$,
and define $\Fmap(y)$ as the unique element of $\F_\ZZ$ with
$
\Fmap(y)(i) = y(i)$ for $i\in \supp(y)$
and
$\Fmap(y)\circ \phi = \phi \circ \wfpf .$
\end{definition}

We use the symbol $\Fmap$ to denote this map since $\Fmap(y)$ is formed by ``arc-ifying'' the 
matching that represents $y$, i.e., by adding in edges to pair up all isolated vertices.

We have
$\Fmap(y) = \ifpf(y)$ for $y \in \F_n$.
The involution $\Fmap(z)$ is formed from $z$ by turning every pair of adjacent fixed points into a cycle;
there are two ways of doing this, and we choose the way that  makes $(2i-1,2i)$ into a cycle for
all sufficiently
large $i \in \ZZ$. For example, we have
\[
\Fmap\(
\arcstart
{
*{.} \arc{1}{rrrr}&
*{.}   &
*{.} &
*{.}   & 
*{.}   &
*{.}  &
*{.}  \arc{1}{rr}&
*{.}&
*{.}
\\
*{1} &
*{2} & 
*{3} &
*{4} &
*{5} &
*{6} &
*{7} &
*{8} &
*{9} 
}
\arcstop
\)
\ =\ 
\arcstart
{
*{\dots}&
*{.}  \arc{.5}{r}&
*{.}&
*{.} \arc{1}{rrrr}&
*{.} \arc{.5}{r}  &
*{.} &
*{.} \arc{1}{rr}  & 
*{.}   &
*{.}  &
*{.}  \arc{1}{rr}&
*{.}  \arc{1}{rr}&
*{.}&
*{.} &
*{.} \arc{.5}{r}&
*{.} &
*{\dots}
\\
*{}&
*{} &
*{}&
*{1} &
*{2} & 
*{3} &
*{4} &
*{5} &
*{6} &
*{7} &
*{8} &
*{9} &
*{} &
*{} &
*{}
}
\arcstop
\]

\begin{definition}\label{I-def}
For $z \in \F_\ZZ$, 
define $\Imap(z) \in \I_\ZZ$ as the involution whose nontrivial cycles 
are precisely the pairs $(p,q) \in \Cyc_\ZZ(z)$ for which there exists $(a,b ) \in \Cyc_\ZZ(z)$
with $p<b<q$.
\end{definition}

We use the symbol $\Imap$ to denote this map since $\Imap(z)$ is formed by removing all
``trivial'' arcs from the
matching that represents $z$.

The permutation $\Imap(z)$ is the involution that restricts to the same map as $z$ on its support,
and  whose fixed points are the integers $i \in \ZZ$ such that
$\max\{ i,z(i)\} < z(j)$ for all $j \in \ZZ$ with $\min\{i,z(i)\} < j < \max\{i,z(i)\}$.
 For example, we have
\[
\Imap\(
\arcstart
{
*{\dots}&
*{.}  \arc{.5}{r}&
*{.}&
*{.} \arc{1}{rr}&
*{.} \arc{1}{rrrr}  &
*{.} &
*{.} \arc{.5}{r}  & 
*{.}   &
*{.}  &
*{.}  \arc{1}{rr}&
*{.} \arc{1}{rr}&
*{.} &
*{.}  &
*{.}  \arc{.5}{r}&
*{.}&
*{\dots}
\\
*{}&
*{} &
*{}&
*{1} &
*{2} & 
*{3} &
*{4} &
*{5} &
*{6} &
*{7} &
*{8} &
*{9} &
*{10} &
*{} &
*{}
}
\arcstop
\)
\ =\ 
\arcstart
{
*{.} &
*{.} \arc{1}{rrrr}  &
*{.} &
*{.}   & 
*{.}   &
*{.}  &
*{.}&
*{.}  \arc{1}{rr}&
*{.}&
*{.} 
\\
*{1} &
*{2} & 
*{3} &
*{4} &
*{5} &
*{6} &
*{7} &
*{8} &
*{9} &
*{10}
}
\arcstop
\]
We see in these examples that $\Imap$ and $\Fmap$ restrict to maps $\F_\infty \to \I_\infty$ and
$\I_\infty \to \F_\infty$.

\begin{proposition}
Let $z \in \F_\ZZ$. Then $\Imap(z) = 1$ if and only if $z =\wfpf$.
\end{proposition}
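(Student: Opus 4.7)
The forward direction should be immediate: every nontrivial cycle of $\wfpf$ has the form $(p,q) = (2i-1, 2i)$, so the open interval $(p, q)$ contains no integer, and the existential condition in Definition~\ref{I-def} fails vacuously for every cycle of $\wfpf$. Hence $\I(\wfpf) = 1$.

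For the converse, the plan is to assume $\I(z) = 1$ and deduce $z = \wfpf$ in two steps: first, show that every cycle of $z$ consists of two consecutive integers; second, use this together with the fact that $z \in \F_\ZZ$ must agree with $\wfpf$ outside a finite set (since $S_\ZZ$ consists of finitely supported permutations) to pin down $z$ exactly.

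For the first step I would argue by contradiction. Suppose $(p, q) \in \Cyc_\ZZ(z)$ with $q \geq p+2$. Since $\I(z) = 1$, no cycle of $z$ has its right endpoint strictly between $p$ and $q$; in particular $p+1$ is not the right endpoint of its cycle, so $z(p+1) > p+1$. If $z(p+1) < q$, then the cycle $(p+1, z(p+1))$ would have its right endpoint in the open interval $(p, q)$, contradicting the above. Hence $z(p+1) > q$. But now the cycle $(p, q)$ itself provides a witness $b = q$ with $p+1 < b < z(p+1)$, so $(p+1, z(p+1))$ must be a nontrivial cycle of $\I(z)$, contradicting the hypothesis $\I(z) = 1$.

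For the second step, once every cycle of $z$ has the form $(p, p+1)$, the cycles form a ``domino tiling'' of $\ZZ$, and a short induction shows there are exactly two such tilings: the one with dominoes $\{(2i-1, 2i) : i \in \ZZ\} = \Cyc_\ZZ(\wfpf)$, and the shifted tiling $\{(2i, 2i+1) : i \in \ZZ\}$. The shifted tiling differs from $\wfpf$ at every integer, which is incompatible with $z \in \F_\ZZ$. Therefore $z = \wfpf$. The main obstacle is the double-contradiction argument in the first step; once that is in place, the tiling argument that follows is routine.
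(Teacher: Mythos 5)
Your proof is correct and rests on the same core observation as the paper's: if some cycle $(p,q)$ of $z$ has $q\geq p+2$, then the cycle through $p+1$ and the cycle $(p,q)$ must witness a nontrivial cycle of $\I(z)$ one way or the other. The paper compresses your two-case contradiction into one stroke by taking the contrapositive and choosing $i$ maximal with $i<z(i)\neq i+1$ (maximality forces $z(i+1)<z(i)$, so the cycle through $i+1$ directly witnesses that $(i,z(i))$ is a nontrivial cycle of $\I(z)$), but both arguments implicitly need your final ``domino tiling'' step, so the content is essentially identical.
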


\begin{proof}
If $z \neq \wfpf$ and $i$ is the largest integer such that $i < z(i) \neq i+1$, then
necessarily $z(i+1) < z(i)$, so  $(i, z(i))$ is a nontrivial cycle of $\Imap(z)$,
which is therefore not the identity.
\end{proof}

\begin{proposition}\label{semiinverse-prop}
The composition $\Fmap\circ \Imap$ is the identity map $\F_\ZZ \to \F_\ZZ$.
\end{proposition}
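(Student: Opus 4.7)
The plan is to verify $\F(\I(z))=z$ pointwise. Setting $y=\I(z)$, note that $\F(y)$ agrees with $y$ on $\supp(y)$ by Definition~\ref{Fmap-def}, while $y$ agrees with $z$ on $\supp(y)$ by Definition~\ref{I-def}; hence $\F(y)=z$ on $\supp(y)$, and the problem reduces to verifying equality on $\ZZ\setminus\supp(y)$.

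The first main step is a structural observation: each ``removed'' cycle of $z$, that is, each $(p,q)\in\Cyc_\ZZ(z)$ with $p<q$ for which no $(a,b)\in\Cyc_\ZZ(z)$ satisfies $p<b<q$, pairs two elements of $\ZZ\setminus\supp(y)$ that are immediate neighbors within that set. To see this, I would show every integer $j$ with $p<j<q$ lies in $\supp(y)$. Isolation of $(p,q)$ forces $z(j)>q$: if $z(j)<j$ then the cycle $(z(j),j)$ has larger endpoint $j\in(p,q)$, contradicting isolation; if $j<z(j)<q$ then $(j,z(j))$ has larger endpoint $z(j)\in(p,q)$, again contradicting isolation; and $z(j)\in\{j,p,q\}$ is ruled out since $z$ is FPF and $(p,q)$ is already a cycle. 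But then the cycle $(j,z(j))$ is itself non-isolated, because $(p,q)$ is a cycle with $j<q<z(j)$, so $j\in\supp(y)$. Since every element of $\ZZ\setminus\supp(y)$ is a fixed point of $y$ and thus lies in some isolated cycle of $z$, the isolated cycles induce a perfect matching on $\ZZ\setminus\supp(y)$ in which each element is paired with its immediate neighbor in that set.

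Transporting this matching through the order-preserving bijection $\phi\colon\ZZ\to\ZZ\setminus\supp(y)$ of Definition~\ref{Fmap-def} yields a matching on $\ZZ$ in which each $k$ is paired with $k\pm1$. Exactly two such matchings exist, and they differ by a global shift; I pin down the correct one using the asymptotic behavior of $z$. Since $z\in\F_\ZZ$ is $S_\ZZ$-conjugate to $\wfpf$ by a finite-support permutation, $z(i)=\wfpf(i)=i-(-1)^i$ for all sufficiently negative $i$. Because $m:=\phi(0)$ is required to be even and $\supp(y)$ is bounded below by $m$, one has $\phi(k)=m+k$ for all $k\leq0$, and for sufficiently negative $k$ the $z$-cycle through $\phi(k)$ coincides with the $\wfpf$-cycle through $m+k$; an even value $m+k$ is paired with $m+k-1$, which under $\phi$ corresponds to the pair $(k-1,k)$ with $k$ even. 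This identifies the matching globally as $z(\phi(k))=\phi(k-(-1)^k)=\phi(\wfpf(k))=\F(y)(\phi(k))$ for every $k\in\ZZ$.

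The delicate step is this parity bookkeeping. The hypothesis that $m$ be even in Definition~\ref{Fmap-def} is exactly what makes the matching of $z$ on $\ZZ\setminus\supp(y)$ coincide with that of $\F(y)$; were $m$ odd, the two matchings would differ by a global shift and $\F(\I(z))$ would not equal $z$. Combined with agreement on $\supp(y)$, this gives $\F(y)=z$ and completes the proof.
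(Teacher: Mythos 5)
Your proof is correct and follows essentially the same route as the paper: the paper's one-line key fact (distinct ``removed'' cycles of $z$ are neither nesting nor crossing, hence occur as disjoint consecutive blocks) is equivalent to your observation that each isolated cycle $(p,q)$ pairs immediate neighbors of $\ZZ\setminus\supp(\I(z))$, after which both arguments conclude that $\F$ rebuilds exactly these cycles. Your writeup additionally spells out the parity normalization (the choice of $m$ even and the agreement of $z$ with $\wfpf$ far to the left, which selects the correct one of the two adjacent-pair matchings) that the paper dismisses as a ``straightforward consequence.''
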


\begin{proof}
Fix $z \in \I_\infty$. 
Let $\cC$ be the set of cycles $(p,q) \in \Cyc_\ZZ(z)$
such that $p$ and $q$ are fixed points in $\Imap(z)$.
By definition, if $(p,q)$ and $(p',q')$ are distinct elements of $\cC$
then  $p<q<p'<q'$ or $p'<q'<p<q$.
The claim that $\Fmap\circ \Imap(z) = z$
is a straightforward consequence of this fact.
\end{proof}

An involution $y \in \I_\ZZ$ is \emph{I-Grassmannian} if $y=1$ or
$y = (\phi_1, n+1)(\phi_2,n+2)\cdots (\phi_r,n+r)$
for some integers $r \in \PP$ and  $ \phi_1 < \phi_2<\dots < \phi_r \leq n$.
See \cite[Proposition-Definition 4.16]{HMP4} for several equivalent characterizations of such involutions.

\begin{definition} 
\label{d:fpf_grass}
An involution $z \in \F_\ZZ$ is \emph{FPF-Grassmannian} if 
$\Imap(z)\in \I_\ZZ$ is I-Grassmannian.
\end{definition}

Define an element of $\F_n$
to be FPF-Grassmannian if its image under $\ifpf : \F_n \to \F_\infty\subset \F_\ZZ$ is FPF-Grassmannian.

\begin{remark}
The sequence $(g^\fpf_{n})_{n\geq 1} = (1,3,12,41,124,350,952,2540,\dots)$
with $g^\fpf_n$ the number of FPF-Grassmannian elements of $\ifpf(\F_n)\subset \F_\ZZ$
seems unrelated to any sequence in \cite{OEIS}.
\end{remark}

Suppose $z \in \F_\ZZ-\{\wfpf\}$ is FPF-Grassmannian,
so that 
\[\Imap(z)=(\phi_1, n+1)(\phi_2,n+2)\cdots (\phi_r,n+r) \in \I_\infty\]
for integers $r \in \PP$ and  $\phi_1 < \phi_2<\dots < \phi_r \leq n $.
Recall from the introduction that $\flambda(z)$ is the transpose of the partition given by sorting $\cfpf(z)$.

\begin{lemma}
\label{l:grass_shape}
In the notation just given, it holds that $\flambda(z) = (n-\phi_1,n-\phi_2,\dots,n-\phi_r)$.
\end{lemma}

\begin{proof}
The definitions of $\Dfpf(y)$, $\cfpf(y)$ and $\flambda(y)$ make sense even when $y \in \I_\ZZ$.
Let $y = \Imap(z)$. It is easy to check that the only nonempty columns of $\Dfpf(y)$ are $\phi_1,\phi_2,\dots,\phi_r$ and that the $\phi_i$th column is $\{(\phi_i+1,\phi_i),(\phi_i+2,\phi_i),\dots,(n,\phi_i)\}$.
Therefore  $\flambda(y) = (n-\phi_1,n-\phi_2,\dots,n-\phi_r)$, since 
sorting $\cfpf(y)$ gives the transpose of this partition.

Fix positive integers $i<k$ and suppose
$(i,k)$ is a cycle in $z$ that is not a cycle in $y$, so that $y(i) = i$ and $y(k) = k$.
Suppose $i<j<k$. From the definition of $\Imap$, it follows that
$(j,i) \in \Dfpf(z) \setminus \Dfpf(y)$
and $j = \phi_l$ for some $l\in [r]$.
Therefore, we have $(k,j) \in \Dfpf(y) \setminus \Dfpf(z)$, so 
\[
\Dfpf(z) \cap [i,k]^2= \{(p,j) \in \PP \times \PP: i \leq j < p < k\}\]
and 
\[ \Dfpf(y) \cap [i,k]^2= \{(p,j) \in \PP \times \PP: i < j < p \leq k\}.
\]
If $p$ is an integer with $i \leq p \leq k$ then
\[
\{q < i: (p,q) \in \Dfpf(z)\} = \{q < i: (p,q) \in \Dfpf(y)\} = \{l: \phi_l < i\}.
\]
With $\cfpf(z) = (c_1(z),c_2(z),\dots)$ and $\cfpf(y) = (c_1(y),c_2(y),\dots)$, we deduce that $c_j(z) = c_{j+1}(y)$ for $i \leq j < k$ and $c_k(z) = c_i(y)$.
When $j$ is not between the endpoints of some cycle $(i,k)$ in $z$ but not $y$, we have $c_j(y) = c_j(z)$.
Therefore $\cfpf(z)$ and $\cfpf(y)$ are the same multisets, so $\flambda(z) = \flambda(y)$.
\end{proof}

\begin{example} 
Consider $z = (1,4)(2,6)(3,7)(5,8)$ and $y = \Imap(z) = (2,6)(3,7)(5,8)$.
Then
\[
\Dfpf(z) = \begin{array}{cccccccc}
    \cdot &  \cdot &  \cdot &  \times &  \cdot &  \cdot &  \cdot &  \cdot\\
    \circ &  \cdot &  \cdot &  \cdot &  \cdot &  \times &  \cdot &  \cdot\\
    \circ &  \circ &  \cdot &  \cdot &  \cdot &  \cdot &  \times &  \cdot\\
    \times &  \cdot &  \cdot &  \cdot &  \cdot &  \cdot &  \cdot &  \cdot\\
    \cdot &  \circ &  \circ &  \cdot &  \cdot &  \cdot &  \cdot &  \times\\
    \cdot &  \times &  \cdot &  \cdot &  \cdot &  \cdot &  \cdot &  \cdot\\
    \cdot &  \cdot &  \times &  \cdot &  \cdot &  \cdot &  \cdot &  \cdot\\
    \cdot &  \cdot &  \cdot &  \cdot &  \times &  \cdot &  \cdot &  \cdot
 \end{array}
 \]
 and
 \[
 \Dfpf(y) = \begin{array}{cccccccc}
    \times &  \cdot &  \cdot &  \cdot &  \cdot &  \cdot &  \cdot &  \cdot\\
    \cdot &  \cdot &  \cdot &  \cdot &  \cdot &  \times &  \cdot &  \cdot\\
    \cdot &  \circ &  \cdot &  \cdot &  \cdot &  \cdot &  \times &  \cdot\\
    \cdot &  \circ &  \circ &  \times &  \cdot &  \cdot &  \cdot &  \cdot\\
    \cdot &  \circ &  \circ &  \cdot &  \cdot &  \cdot &  \cdot &  \times\\
    \cdot &  \times &  \cdot &  \cdot &  \cdot &  \cdot &  \cdot &  \cdot\\
    \cdot &  \cdot &  \times &  \cdot &  \cdot &  \cdot &  \cdot &  \cdot\\
    \cdot &  \cdot &  \cdot &  \cdot &  \times &  \cdot &  \cdot &  \cdot
 \end{array}.
\]
The positions marked $\times$ in the respective diagrams are those of the form $(i,y(i))$ or $(i,z(i))$. 
We have $\cfpf(z) = (0,1,2,0,2,0,0)$ while $\cfpf(y) = (0,0,1,2,2,0,0)$.
In addition, we observe that $c_1(z) = c_2(y)$, $c_2(z) = c_3(y)$, and $c_3(z) = c_4(y)$, as predicted in the argument for Lemma~\ref{l:grass_shape}.
\end{example}

Given integers $a,b \in \PP$ with $a< b$, define
$\partial_{b,a} = \partial_{b-1} \partial_{b-2}\cdots \partial_a$
and
$\pi_{b,a} = \pi_{b-1} \pi_{b-2}\cdots \pi_a$.
 For $a,b \in \PP$ with $a\geq b$, set
$\partial_{b,a} = \pi_{b,a} = \id$.

\begin{lemma}\label{fgrass-lem4}
Maintain the preceding setup, but assume $z $ is an FPF-Grassmannian element of $\F_\infty-\{\wfpf\}$ so that 
 $1\leq \phi_1 <\phi_2<\dots < \phi_r \leq n $. Then
$ \Sfpf_z = \pi_{\phi_1,1} \pi_{\phi_2,2} \cdots \pi_{\phi_r,r} \( x^{\flambda(z)} G_{r,n}\).$
\end{lemma}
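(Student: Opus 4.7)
My plan is to prove the formula by induction on $\sum_{i=1}^r (\phi_i - i) \in \NN$, with the FPF-dominant case as the base and the divided-difference recurrence of Theorem~\ref{fpfthm} driving the inductive step.

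For the base case, $\sum_i (\phi_i - i) = 0$ forces every $\phi_i = i$, so each operator $\pi_{\phi_i, i} = \pi_{i, i}$ is the identity and the right-hand side collapses to
$$x^{\flambda(z)} G_{r, n} \;=\; \prod_{i=1}^{r} \prod_{j=1}^{n-i}(x_i + x_{i+j}).$$
A direct inspection of the constructions of $\I$ and $\F$ shows that $z$ is FPF-dominant with diagram $\Dfpf(z) = \{(i+j, i) : 1 \leq i \leq r,\ 1 \leq j \leq n-i\}$, so Theorem~\ref{fpfdominant-thm} gives $\Sfpf_z = \prod_{(a,b) \in \Dfpf(z)} (x_a + x_b)$, which matches after reindexing.

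For the inductive step, suppose $\sum_i (\phi_i - i) > 0$ and let $k$ be the smallest index with $\phi_k > k$; then $\phi_i = i$ for $i < k$ and (setting $\phi_0 = 0$) $\phi_k - 1 \geq k > \phi_{k-1}$, so the tuple $(\phi_1, \ldots, \phi_{k-1}, \phi_k - 1, \phi_{k+1}, \ldots, \phi_r)$ defines another FPF-Grassmannian $z' \in \F_\infty$ with strictly smaller $\sum (\phi_i' - i)$. By tracking how $s_{\phi_k - 1}$-conjugation moves the cycle $(\phi_k - 1, n+k)$ of $\I(z')$ past the adjacent fixed points paired by $\F$, I would verify that $z = s_{\phi_k - 1} z' s_{\phi_k - 1}$, with $s_{\phi_k - 1} \in \DesR(z')$ (since $z'(\phi_k - 1) = n + k$ exceeds $z'(\phi_k)$) and $(\phi_k - 1, \phi_k) \notin \Cyc_\ZZ(z')$. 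Theorem~\ref{fpfthm} then yields $\partial_{\phi_k - 1} \Sfpf_{z'} = \Sfpf_z$, and by induction $\Sfpf_{z'}$ equals its right-hand side.

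It remains to check that applying $\partial_{\phi_k - 1}$ to the right-hand side for $z'$ produces the right-hand side for $z$. Set $h := \pi_{\phi_{k+1}, k+1} \cdots \pi_{\phi_r, r}(x^{\flambda(z)} G_{r, n})$. Using $x^{\flambda(z')} = x_k \cdot x^{\flambda(z)}$, that $x_k$ commutes past every $\pi_{\phi_j, j}$ with $j > k$ (each involving only indices $\geq k + 1$), and that $\pi_{\phi_i, i} = \id$ for $i < k$, the $z'$-formula simplifies to $\pi_{\phi_k - 1, k}(x_k h)$ while the $z$-formula becomes $\pi_{\phi_k, k}(h) = \pi_{\phi_k - 1}\pi_{\phi_k - 1, k}(h) = \partial_{\phi_k - 1}\bigl(x_{\phi_k - 1} \pi_{\phi_k - 1, k}(h)\bigr)$. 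So the required identity is
$$\partial_{\phi_k - 1}\Bigl[\pi_{\phi_k - 1, k}(x_k h) - x_{\phi_k - 1} \pi_{\phi_k - 1, k}(h)\Bigr] = 0.$$
The hard part will be verifying this last identity. My plan is to first show that $h$ is symmetric in $x_{k+1}, \ldots, x_{\phi_k - 1}$ (by carefully analyzing how the outer operators $\pi_{\phi_j, j}$ for $j > k$ interact with $G_{r, n}$ and $x^{\flambda(z)}$ in these variables), then to apply Lemma~\ref{pi-lem} to rewrite $\pi_{\phi_k - 1, k}(f) = \partial_{\phi_k - 1, k}(x_k^{\phi_k - 1 - k} f)$ for the relevant $f$, and finally to conclude via a direct Leibniz-rule computation exploiting $\partial_{\phi_k - 1}^2 = 0$ and the commutativity $\partial_i \partial_j = \partial_j \partial_i$ for $|i - j| \geq 2$.
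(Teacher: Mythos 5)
Your strategy is essentially the paper's own proof: the base case is the FPF-dominant involution handled by Theorem~\ref{fpfdominant-thm}, and the inductive step runs on the divided-difference recurrence of Theorem~\ref{fpfthm} combined with Lemma~\ref{pi-lem}. The only structural difference is that you decrement $\phi_k$ one unit at a time, whereas the paper applies the whole chain $\partial_{\phi_i,i}=\partial_{\phi_i-1}\cdots\partial_i$ in a single step; this is cosmetic, and your intermediate claims (that the tuple $\phi-e_k$ is realizable, that $z=s_{\phi_k-1}z's_{\phi_k-1}$, and that $s_{\phi_k-1}\in\DesR(z')$ with $(\phi_k-1,\phi_k)\notin\Cyc_\ZZ(z')$) do check out, the last because $\I(z')$ has an even number of fixed points in $[n]$ and so $\F$ pairs $\phi_k$ with another element of $[n]$, giving $z'(\phi_k)\leq n<n+k=z'(\phi_k-1)$.

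The one point that needs correcting is the symmetry range in your final step. For the identity
\[
\partial_{\phi_k-1}\bigl[\pi_{\phi_k-1,k}(x_kh)-x_{\phi_k-1}\,\pi_{\phi_k-1,k}(h)\bigr]=0
\]
you need $\partial_jh=0$ for all $k<j<\phi_k$, i.e.\ symmetry of $h$ in $x_{k+1},\dots,x_{\phi_k}$, not merely in $x_{k+1},\dots,x_{\phi_k-1}$. Already for $\phi_k=k+2$ your stated range imposes no condition, yet there the bracket simplifies to $x_kh$ and the identity reads $\partial_{k+1}(x_kh)=x_k\partial_{k+1}h=0$, which genuinely requires $\partial_{\phi_k-1}h=0$. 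The needed symmetry is true, and it is exactly what the paper also relies on (silently) when it invokes Lemma~\ref{pi-lem}. The least painful way to obtain it is not a direct analysis of how the operators $\pi_{\phi_j,j}$ interact with $G_{r,n}$, but the inductive hypothesis applied to the FPF-Grassmannian $v$ with $\I(v)=(1,n+1)\cdots(k,n+k)(\phi_{k+1},n+k+1)\cdots(\phi_r,n+r)$: it gives $x_k^{\phi_k-k}h=\Sfpf_v$, and Theorem~\ref{fpfthm} then forces $\partial_j\Sfpf_v=0$ for $k<j<\phi_k$, since each such pair $j,j+1$ lies in a run of consecutive fixed points of $\I(v)$ that $\F$ pairs off adjacently, so either $(j,j+1)\in\Cyc_\ZZ(v)$ or $s_j\notin\DesR(v)$. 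Once you have the full symmetry you can apply Lemma~\ref{pi-lem} directly with $(a,b)=(k,\phi_k)$ and skip the Leibniz computation altogether, which is precisely the paper's route.
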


\begin{proof}
The proof 
depends on the following claim, which is proved as \cite[Lemma 2.2]{HMP4}:
\begin{claim}
If $a\leq b$ and  $f \in \cL$ are such that $\partial_i f = 0$ for $a< i < b$,
then
$\pi_{b,a} f = \partial_{b,a}\(x_a^{b-a}f\).$
\end{claim}
If $c_1<c_2 < \dots < c_k $ are the fixed points in $[n]$ of $\Imap(z)$, then $k$ is even and
we have
$(c_1,c_2), (c_3,c_4), \dots, (c_{k-1},c_k) \in \Cyc_\ZZ(z)$.
Hence if $\phi_i=i$ for all $i \in [r]$
then $z$ is FPF-dominant and
\[\Dfpf(z) = \{ (i+j,i) : i \in [r]\text{ and } j \in [n-i]\}.\]
In this case the lemma reduces to the formula
$\Sfpf_z = x_1^{n-1} x_2^{n-2}\cdots x_r^{n-r} G_{r,n}$
which follows from Theorem \ref{fpfdominant-thm}.

Alternatively, suppose there exists $i \in [r]$ such that $i<\phi_i$.
Assume $i$ is minimal with this property.
Then $\Sfpf_z = \partial_{\phi_i,i} \Sfpf_v$ for the FPF-Grassmannian
involution $v \in \F_\infty$ with 
\[
\Imap(v)=(1, n+1)(2,n+2)\cdots(i,n+i)(\phi_{i+1},n+i+1)(\phi_{i+2},n+i+2)\cdots (\phi_r,n+r).
\]
By induction 
$ \Sfpf_v = \pi_{\phi_{i+1},i+1} \pi_{\phi_{i+2},i+2} \cdots \pi_{\phi_r,r} \( x^{\flambda(v)} G_{r,n}\).$
Since $x^{\flambda(v)} = x_i^{\phi_i-i}x^{\flambda(z)}$
and since multiplication by $x_i$
commutes with $\pi_{j}$ when $i<j$,
it follows from the claim that
\[
\ba
\Sfpf_z &= \partial_{\phi_i,i} \Sfpf_v\\& = 
\partial_{\phi_i,i} ( 
x_i^{\phi_i-i} \pi_{\phi_{i+1},i+1} \pi_{\phi_{i+2},i+2} \cdots \pi_{\phi_r,r} ( x^{\flambda(z)} G_{r,n})
)
\\&=
\pi_{\phi_i,i} \pi_{\phi_{i+1},i+1} \pi_{\phi_{i+2},i+2} \cdots \pi_{\phi_r,r} ( x^{\flambda(z)} G_{r,n})
.\ea\]
The last expression is $\pi_{\phi_1,1}  \cdots \pi_{\phi_r,r} ( x^{\flambda(z)} G_{r,n})$ since we assume $\pi_{\phi_1,1} = \dots = \pi_{\phi_{i-1},i-1} = \id.$
\end{proof}

\begin{theorem}\label{fgrass-thm}
If $z \in \F_\ZZ$ is FPF-Grassmannian, then $\Ffpf_z = P_{\flambda(z)}$.
\end{theorem}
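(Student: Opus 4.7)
The plan is to apply Lemma~\ref{fgrass-lem4} to obtain an explicit formula for $\Sfpf_z$, and then pass to the stable limit defining $\Ffpf_z$. The case $z = \wfpf$ is immediate since $\Sfpf_{\wfpf} = 1$ by Theorem~\ref{fpfthm}, so $\Ffpf_{\wfpf} = 1 = P_\emptyset = P_{\flambda(\wfpf)}$. For general FPF-Grassmannian $z \in \F_\ZZ - \{\wfpf\}$, first reduce to the case $z \in \F_\infty$: for $N$ sufficiently large the shift $z \gg 2N$ lies in $\F_\infty$, and since shifting by an even integer commutes with $\I$ the involution $z \gg 2N$ remains FPF-Grassmannian with $\flambda(z \gg 2N) = \flambda(z)$, while $\Ffpf_z = \Ffpf_{z \gg 2N}$ follows from the definition in Theorem-Definition~\ref{fF-def}.

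So assume $z \in \F_\infty - \{\wfpf\}$ with $\I(z) = (\phi_1, n+1)\cdots(\phi_r, n+r)$ and $\mu := \flambda(z)$. Lemma~\ref{fgrass-lem4} gives
\[ \Sfpf_z = \pi_{\phi_1,1}\pi_{\phi_2,2}\cdots\pi_{\phi_r,r}(x^\mu G_{r,n}). \]
For any $m \geq n$, every index $j$ appearing in this operator product satisfies $j \leq \phi_r - 1 \leq n - 1 < m$, so $s_j \in \DesR(w_m)$; combining $\pi_j^2 = \pi_j$ with a reduced decomposition of $w_m$ ending in $s_j$ gives $\pi_{w_m}\pi_j = \pi_{w_m}$. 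Iteratively absorbing each inner operator yields $\pi_{w_m}\Sfpf_z = \pi_{w_m}(x^\mu G_{r,n})$, and taking $m \to \infty$ via Theorem-Definition~\ref{fF-def} produces
\[ \Ffpf_z = \lim_{m \to \infty}\pi_{w_m}(x^\mu G_{r,n}). \]

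The final step---identifying this limit with $P_\mu = \lim_{m\to\infty}\pi_{w_m}(x^\mu G_{r,m})$ from Theorem-Definition~\ref{schurp-def}---is the main obstacle, since for fixed $m > n$ the two expressions differ by the factor $H_{n,m} := G_{r,m}/G_{r,n} = \prod_{i \in [r]}\prod_{j=n-i+1}^{m-i}(1+x_i^{-1}x_{i+j})$ composed of terms $(1+x_i^{-1}x_k)$ with $n < k \leq m$. I would prove $\pi_{w_m}(x^\mu G_{r,n}(H_{n,m}-1)) \to 0$ as $m \to \infty$ by induction on $m - n$, reducing to the one-step claim $\pi_{w_m}(x^\mu G_{r,n}) = \pi_{w_m}(x^\mu G_{r,n+1})$ for $r \leq n < m$. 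After expanding $\prod_{i \in [r]}(1+x_i^{-1}x_{n+1})$, this boils down to showing that $\pi_{w_m}$ annihilates each term $x^\mu G_{r,n} \cdot x_{n+1}^{|S|} \prod_{i \in S} x_i^{-1}$ for $\emptyset \neq S \subseteq [r]$; such annihilations are consistent with the basic identity $\pi_{w_m}(x_k) = 0$ for $2 \leq k \leq m$, and should follow from a recursive analysis of the $\pi$-operators using the braid relations together with $\pi_i(x_{i+1}) = 0$.
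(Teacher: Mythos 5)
Your first two paragraphs follow the paper's proof exactly: reduce to $z \in \F_\infty$ by shift-invariance, apply Lemma~\ref{fgrass-lem4}, and absorb each $\pi_{\phi_i,i}$ into $\pi_{w_m}$ via $\pi_{w_m}\pi_j=\pi_{w_m}$, arriving at $\pi_{w_m}\Sfpf_z = \pi_{w_m}(x^\mu G_{r,n})$ for all $m \geq n$. The problem is the third paragraph. The discrepancy you isolate between $\lim_m \pi_{w_m}(x^\mu G_{r,n})$ (with $n$ fixed) and $P_\mu = \lim_m\pi_{w_m}(x^\mu G_{r,m})$ is genuine, but your resolution is only a plan, not a proof: the claim that $\pi_{w_m}$ annihilates each term $x^\mu G_{r,n}\,x_{n+1}^{|S|}\prod_{i\in S}x_i^{-1}$ for $\emptyset\neq S\subseteq[r]$ is a nontrivial statement about Laurent polynomials, and being ``consistent with'' $\pi_{w_m}(x_k)=0$ together with an unspecified ``recursive analysis of the $\pi$-operators'' does not establish it. As written, the argument stops exactly at the point where $\Ffpf_z$ is to be identified with $P_\mu$.

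The paper avoids this issue entirely by using the shift trick a second time --- the very observation you already made in your first paragraph. Since $\Ffpf_z = \Ffpf_{z\gg 2N}$ and $\I(z\gg 2N) = (\phi_1+2N,\,n+2N+1)\cdots(\phi_r+2N,\,n+2N+r)$, the involution $z\gg 2N$ is FPF-Grassmannian with the same shape $\mu$ but with parameter $n+2N$ in place of $n$. Applying Lemma~\ref{fgrass-lem4} to $z \gg 2N$ and absorbing the operators gives
\[ \pi_{w_{n+2N}}\Sfpf_{z\gg 2N} \;=\; \pi_{w_{n+2N}}\bigl(x^{\mu}G_{r,\,n+2N}\bigr), \]
in which the index of $G_{r,\cdot}$ and the index of $\pi_{w_{\cdot}}$ grow together. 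Letting $N\to\infty$, the right-hand side runs through (a subsequence of) the limit defining $P_\mu$ in Theorem-Definition~\ref{schurp-def}, and the left-hand side tends to $\Ffpf_z$; no comparison of $G_{r,n}$ with $G_{r,m}$ for $n\neq m$ is ever required. Either adopt this route, or actually carry out the induction you sketch --- but the latter amounts to reproving a stability property that the shift already gives you for free.
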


\begin{proof}
Since $\Ffpf_z = \Ffpf_{z \gg N}$ for $N \in 2\ZZ$,
we may assume that $z \in \F_\infty$ and that 
 $\Imap(z) $ is I-Grassmannian. 
 Since $\pi_{w_n} \pi_i = \pi_{w_n}$ for $i \in [n-1]$,
Lemma~\ref{fgrass-lem4} implies
that if $\flambda(z)$ has $r$ parts and $n\geq r$
then
$ \pi_{w_n} \Sfpf_z = \pi_{w_n} \(x^{\flambda(z)} G_{r,n}\)$.
Now take the limit  as $n\to \infty$ and apply Lemma~\ref{istab-lem}.
\end{proof}

Let us clarify the difference between FPF-Grassmannian
involutions and elements of $\F_\ZZ$ with at most one FPF-visible descent.
Define $\I_\infty \omdef= S_\infty \cap \I_\ZZ$ and for each $y \in \I_\infty$
let 
\be
\label{desi-eq}
\DesI(y) \omdef= \{ i \in \ZZ :
z(i+1) \leq \min\{i,z(i)\}.
\ee
Elements of $\DesI(y)$ are \emph{visible descents} of $y$.

\begin{lemma}\label{prodef3-lem}
Let $z \in \F_\infty$ and $E = \{ i \in \PP : |z(i)-i| \neq 1\}$.
Suppose $y \in \I_\infty$ is the involution with $y(i) = z(i)$ if $i \in E$ and $y(i)=i$
otherwise. Then
  $z = \Fmap(y)$ and
$\DesIF(z) = \DesI(y)$.

\end{lemma}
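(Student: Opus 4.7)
The plan is to verify the two assertions separately, both by unpacking definitions. First I would handle $z = \F(y)$. By construction $y$ and $z$ agree on $E = \supp(y)$, so $\F(y)$ and $z$ automatically agree on $E$; the content of the claim is that they also agree on $F = \ZZ \setminus E$. The key structural observation to establish is that $F$ decomposes as a disjoint union of adjacent pairs $\{a, a+1\}$, each a cycle of $z$: indeed, $i \in F$ forces $z(i) \in \{i-1, i+1\}$, and once $\{a, a+1\} \in \Cyc_\ZZ(z)$ any further $a+2 \in F$ must satisfy $z(a+2) = a+3$ because $z(a+1) = a$ is already used. Now choosing $m$ to be a sufficiently negative even integer guarantees $z$ agrees with $\wfpf$ below $m$, so $\{m-1, m\}$ is itself a pair of $z$ and $m$ is the ``even'' (second) member of its pair. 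The order-preserving bijection $\phi : \ZZ \to F$ with $\phi(0) = m$ then sends each $\wfpf$-pair $\{2j-1, 2j\}$ to a consecutive $z$-pair, and hence $\F(y)|_F = z|_F$.

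For $\DesIF(z) = \DesI(y)$, I would unpack both membership conditions: $s_i \in \DesIF(z)$ iff $z(i+1) < \min\{i, z(i)\}$, while $i \in \DesI(y)$ iff $y(i+1) \leq \min\{i, y(i)\}$, and then split on which of $i, i+1$ belong to $E$. When both are in $E$, the two sides agree immediately because $y$ and $z$ coincide on $\{i, i+1\}$ and $z$ has no fixed points, so the weak and strict inequalities collapse to the same statement. When $i \notin E$ but $i+1 \in E$, one is forced to have $z(i) = i-1$ (the alternative $z(i) = i+1$ would force $z(i+1) = i$ and hence $i+1 \notin E$), and the fact that $z(i+1) \notin \{i-1, i, i+1\}$ makes the $y$-side condition $z(i+1) \leq i$ equivalent to the $z$-side condition $z(i+1) \leq i-2$. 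When $i+1 \notin E$ both conditions fail for elementary reasons: the $y$-side because $y(i+1) = i+1 > i$, and the $z$-side because $z(i+1) \in \{i, i+2\}$ can never undercut $\min\{i, z(i)\}$.

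I expect the main obstacle to be the parity alignment in the first part: one must carefully verify that the convention built into $\wfpf$ (pairing $2j-1$ with $2j$) is compatible with the pairing of $F$ induced by $z$, which is exactly where the evenness of $m$ and its sufficient smallness below $E$ are used. Once that alignment is pinned down, both parts of the lemma reduce to finite case checks.
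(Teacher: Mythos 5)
Your proof is correct and follows essentially the same route as the paper, which treats $z=\F(y)$ as evident and verifies $\DesIF(z)=\DesI(y)$ by the same case analysis on whether $i$ and $i+1$ lie in $E$ (organized there as two implications rather than case-by-case equivalences). One small imprecision: in the case $i,i+1\in E$, the reason the weak inequality $y(i+1)\leq\min\{i,y(i)\}$ upgrades to a strict one is not that $z$ is fixed-point-free but that $i+1\in E$ rules out $z(i+1)=i$; your case hypothesis supplies exactly this, so the argument stands.
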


\begin{proof}
It is evident that $z = \Fmap(y)$. Suppose $s_i \in \DesI(y)$. 
Since $y(i+1) \neq i$ for all $i \in \PP$ by definition, we must have  $y(i+1) < \min\{i,y(i)\}$,
so $i+1 \in E$, and therefore either  $i \in E$ or $z(i) = i-1$. It follows in either case that
$z(i+1) < \min\{i,z(i)\}$ so $s_i \in \DesIF(z)$.
Conversely, suppose $s_i \in \DesIF(z)$ so that $i+1 \in E$. If $ i \in E$ then
$s_i \in \DesI(y)$ holds immediately,
and if $i \notin E$ then
$z(i+1)<z(i)=i-1$, in which case $y(i+1) = z(i+1)<i=y(i)$ so $s_i \in \DesI(y)$.
\end{proof}

In our previous work, we showed that 
$y \in \I_\ZZ$ is I-Grassmannian if and only if 
$|\DesI(y)| \leq 1$
\cite[Proposition-Definition 4.16]{HMP4}.
Using this fact, we deduce the following:

\begin{proposition}
An involution $z \in \F_\ZZ$ has $|\DesIF(z)| \leq 1$ 
if and only if 
$z$ is FPF-Grassmannian and $\flambda(z)$
is a strict partition whose consecutive
parts each differ by odd numbers.
\end{proposition}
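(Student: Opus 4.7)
The plan is to chain the statement to the characterization of I-Grassmannian involutions proved in \cite{HMP4}: an involution $y \in \I_\ZZ$ satisfies $|\DesI(y)| \leq 1$ if and only if $y$ is I-Grassmannian. Combining this with Lemma~\ref{prodef3-lem}, which identifies $\DesIF(z)$ with $\DesI(y)$ for the associated involution $y$ (obtained from $z$ by converting every adjacent cycle $(k,k+1)$ into a pair of fixed points), the inequality $|\DesIF(z)| \leq 1$ becomes equivalent to the statement that $y$ is I-Grassmannian. The task thus reduces to proving the equivalence ``$y$ is I-Grassmannian if and only if $z$ is FPF-Grassmannian with $\flambda(z)$ having odd consecutive differences.''

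For the forward direction, first dispatch the case $y = 1$: this forces $z = \wfpf$ and $\flambda(z) = \emptyset$, vacuously satisfying the condition. Otherwise write $y = (\psi_1, n+1)\cdots(\psi_s, n+s)$ with $\psi_1 < \cdots < \psi_s \leq n$; the cycles of $y$ are exactly the non-adjacent cycles of $z$. I would show first that $\I(z)$ is I-Grassmannian by observing that each cycle $(\psi_i, n+i)$ with $i \geq 2$ lies in $\I(z)$ because $(\psi_{i-1}, n+i-1)$ contributes an upper endpoint strictly between $\psi_i$ and $n+i$, while the first cycle $(\psi_1, n+1)$ either lies in $\I(z)$ (giving $\I(z) = y$) or is absent (giving $\I(z) = (\psi_2, n+2)\cdots(\psi_s, n+s)$ with the parameter shifted by one); in either subcase $\I(z)$ is I-Grassmannian and $z$ is FPF-Grassmannian. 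By Proposition~\ref{semiinverse-prop} we have $z = \F(y)$, and the $\F$-construction pairs the fixed points of $y$ into cycles of $z$ that must all be adjacent; this constrains each interior interval $[\psi_i+1,\psi_{i+1}-1]$ of $[1,n] \setminus \{\psi_1, \dots, \psi_s\}$ to have even cardinality, forcing $\psi_{i+1} - \psi_i$ to be odd. The identity $\flambda(z)_i - \flambda(z)_{i+1} = \phi_{i+1} - \phi_i$ coming from the I-Grassmannian form of $\I(z)$ then translates this to odd consecutive differences in $\flambda(z)$.

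For the reverse direction, start with $z$ FPF-Grassmannian and $\I(z) = (\phi_1, n+1)\cdots(\phi_r, n+r)$ satisfying $\phi_{i+1} - \phi_i$ odd for all $i \in [r-1]$, and compute the non-adjacent cycles of $z = \F(\I(z))$ explicitly in terms of the $\F$-pairing of the fixed points of $\I(z)$. The odd-differences hypothesis makes each interior gap $(\phi_i, \phi_{i+1})$ in $[1, n] \setminus \{\phi_1, \dots, \phi_r\}$ of even length, so those gaps pair entirely into adjacent cycles and contribute no extra cycle to $y$. Analysis of the boundary gaps shows that at most one extra non-adjacent cycle of $y$ can arise, of boundary-extension type, which extends $\I(z)$ by adjoining a single new upper endpoint consecutive with $n+1, \dots, n+r$ and therefore still leaves $y$ in I-Grassmannian form.

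The main obstacle is the parity bookkeeping of the $\F$-pairing at the boundary gaps of $\supp(\I(z))$. An odd-length boundary gap produces a ``leftover'' element that must pair across a $\phi_i$-marker or across the exterior of $[1,n]$, and the delicate point is verifying that such leftovers either vanish or extend $\I(z)$'s block of upper endpoints by exactly one consecutive step, rather than scattering non-adjacent cycles throughout and breaking the consecutiveness required for $y$ to be I-Grassmannian; tracking these parities carefully, and showing that they are controlled precisely by the odd-consecutive-differences condition on $\flambda(z)$, is where the bulk of the argument lies.
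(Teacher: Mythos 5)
Your ``only if'' direction (from $|\DesIF(z)|\leq 1$ to the FPF-Grassmannian-plus-odd-differences conclusion) is essentially the paper's argument: pass to $y$ via Lemma~\ref{prodef3-lem}, invoke the characterization of I-Grassmannian involutions by $|\DesI(y)|\leq 1$, observe that $\I(z)$ equals $y$ or $y$ with its first cycle deleted, and extract the odd differences from the fact that the fixed points of $y$ must pair into adjacent cycles of $z=\F(y)$. That part is fine.

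The gap is in your reverse direction, and it is not just bookkeeping. The assertion that the even-length interior gaps ``pair entirely into adjacent cycles'' is false: whether a gap of $\ZZ\setminus\supp(\I(z))$ pairs internally under the $\F$-construction depends on the parity of the number of complement elements to its left, hence on the parity of $\phi_1-1$ (the size of the leftmost gap $[1,\phi_1-1]$), and this parity is not controlled by $\flambda(z)$. When $\phi_1$ is even, the pairing produces a non-adjacent cycle straddling $\phi_1$ (and then one straddling each later $\phi_i$), which in general destroys the I-Grassmannian property of $y$ rather than extending its block of upper endpoints. Concretely, take $z=\ifpf\((1,3)(2,6)(4,5)\)$ with $(1,3)(2,6)(4,5)\in\F_6$. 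Then $\I(z)=(2,6)$ is I-Grassmannian with $n=5$ and $\phi_1=2$, so $z$ is FPF-Grassmannian with $\flambda(z)=(3)$, a one-part strict partition for which the odd-consecutive-differences condition is vacuous; yet the non-adjacent cycles of $z$ form $(1,3)(2,6)$, which is not I-Grassmannian, and indeed $\DesIF(z)=\{s_2,s_5\}$. So this implication cannot be established as you sketch it --- as literally stated it fails, and the missing datum (the parity of $\phi_1$, equivalently of $n-\flambda(z)_1$) is invisible from $\flambda(z)$ alone. Note that the paper dismisses this direction with ``one can check,'' so there is no detailed argument there to compare against; but the delicate point you flagged at the end of your write-up is precisely where the claim breaks, and no amount of parity-tracking will rescue it without strengthening the hypothesis.
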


\begin{proof}
We may assume that
 $z \in \F_\infty-\{\wfpf\}$. 
If 
 $z$ is FPF-Grassmannian and the consecutive parts of $\flambda(z)$ 
 differ by odd numbers then one can check that $|\DesIF(z)| \leq 1$.
Conversely,
define $y \in \I_\infty$ as in Lemma \ref{prodef3-lem}
so that $z= \Fmap(y)$.
We have $\DesIF(z) = \DesI(y) = \{s_n\}$ if and only if 
$y=(\phi_1, n+1)(\phi_2,n+2)\cdots (\phi_r,n+r)$ for integers $r \in \PP$
and $0 = \phi_0 < \phi_1 < \phi_2<\dots < \phi_r \leq n$.
If $y$ has this form then each
$\phi_i - \phi_{i-1}$ is  necessarily odd,
and  $\Imap(z) = y$ or $\Imap(z) =(\phi_2,n+2)(\phi_3,n+3)\cdots (\phi_r,n+r)$, 
so
$z$ is FPF-Grassmannian and the consecutive parts of $\flambda(z)$
 differ by odd numbers.
\end{proof}

\begin{remark}
Using the previous result, one can show that the number 
 $k_n$ of elements of $\F_n$ with at most one FPF-visible descent
 satisfies the recurrence
$k_{2n} = 2k_{2n-2}+2n-3$ for $n \geq 2$. 
The corresponding sequence $(k_{2n})_{n\geq 1} = (1, 3, 9, 23, 53, 115, 241, 495,\dots)$ 
is  \cite[A183155]{OEIS}.
\end{remark}

\section{Schur $P$-positivity}\label{fpf-pos-sect}

In this section we describe a recurrence for expanding 
$\Ffpf_z$ into FPF-Grassmannian summands,
and use this to deduce that each $\Ffpf_z$ is Schur $P$-positive.
Our strategy is similar to the one used in \cite[\S4.2]{HMP4}, though with some
added technical complications.

Order the set $\ZZ\times \ZZ$ lexicographically.
Recall that  $(i,j) \in \ZZ\times \ZZ$ is an {FPF-visible inversion} of $z \in \F_\ZZ$
if $i<j$ and $z(j) < \min\{ i, z(i)\}$,
and that
 $i \in \ZZ$ is an {FPF-visible descent} of $z$ if $(i,i+1)$
is an FPF-visible inversion.
By Lemma \ref{desf-lem}, every $z \in \F_\ZZ-\{\wfpf\}$
has an FPF-visible descent.

\begin{lemma}
Let $z \in \F_\ZZ-\{\wfpf\}$ and suppose $j \in \ZZ$ is the smallest integer such that $ z(j)<j-1$.
Then $j-1$ is the minimal FPF-visible descent of $z$. 
\end{lemma}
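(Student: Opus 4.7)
The plan is to verify two things: that $j-1$ is an FPF-visible descent, and that no integer smaller than $j-1$ is one. The second claim is the easier direction and will be handled by a one-line contradiction argument, so the main work lies in the first claim, which requires a small case analysis.

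For the minimality assertion, suppose for contradiction that some $i < j-1$ is an FPF-visible descent. Then by definition $z(i+1) < \min\{i, z(i)\} \leq i = (i+1)-1$. But $i+1 < j$, so this contradicts the assumption that $j$ is the smallest integer with $z(j) < j-1$.

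For the assertion that $j-1$ itself is an FPF-visible descent, one must check that $z(j) < \min\{j-1, z(j-1)\}$. The bound $z(j) < j-1$ is given, so it suffices to show $z(j) < z(j-1)$. By the minimality of $j$ applied to $j-1$, together with the fact that $z$ is fixed-point-free (so $z(j-1) \neq j-1$), one has either $z(j-1) = j-2$ or $z(j-1) \geq j$. In the latter case, $z(j-1) \geq j > j-1 > z(j)$, and we are done. In the former case, the pair $(j-2, j-1)$ is a 2-cycle of $z$, so in particular $z(j) \neq j-2$; combined with $z(j) < j-1$ this yields $z(j) \leq j-3 < j-2 = z(j-1)$.

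The main (minor) obstacle is recognizing that one genuinely needs the involutive property of $z$ to rule out $z(j)=j-2$ in the first case; otherwise the inequality $z(j) < z(j-1) = j-2$ would fail. Once this observation is in hand, the argument is pure bookkeeping with the defining minimality of $j$.
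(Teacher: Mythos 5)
Your proof is correct and follows essentially the same route as the paper's: the paper's one-line case split ``either $z(j) < j-2 = z(j-1)$ or $z(j) < j-1 < z(j-1)$'' is exactly your two cases, with your version making explicit the use of the involution property to exclude $z(j)=j-2$. The minimality argument is also identical.
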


\begin{proof}
By hypothesis, either $z(j) < j-2 = z(j-1)$ or $z(j) < j-1 < z(j-1)$, so $j-1$
is an FPF-visible descent of $z$.
If $k-1$ is another FPF-visible descent of $z$, then $z(k) < k-1$ so $j \leq k$.
\end{proof}

\begin{lemma}\label{dichotomy-lem}
Suppose $z \in \F_\ZZ-\{\wfpf\}$. Let $(q,r) \in \ZZ\times \ZZ$ be the lexicographically maximal FPF-visible inversion of $z$. Suppose $m$ is the largest even integer such that $z(m) \neq m-1$.
Then:
\begin{enumerate}
\item[(a)] The number $q$ is the maximal FPF-visible descent of $z$.
\item[(b)] The number $r$
is the maximal integer with $z(r) < \min\{q,z(q)\}$.
\item[(c)] It holds that $
z(q+1) < z(q+2) < \dots < z(m) \leq q
$.
\item[(d)] Either $z(q) < q < r \leq m$ or $q < z(q) = r + 1 = m$.
\end{enumerate}
\end{lemma}

\begin{proof}
Since $(q+1,r)$ is not an FPF-visible inversion of $z$, we must have $\min\{q+1,z(q+1)\} \leq z(r) < \min\{q,z(q)\}$.
These inequalities can only hold if $z(q+1) < q+1$,
so $q$ is an FPF-visible descent of $z$.
Since $(i,i+1)$ is not an FPF-visible inversion of $z$ for any $i>q$, we conclude that $q$ is the maximal FPF-visible descent of $z$.
This prove part (a).
Parts (b) and (c) follow similarly from the assumption that $(q,r)$ is the lexicographically maximal FPF-visible inversion.

If $z(q) < q$, then $z(q) < r \leq m$ since $(q,r)$ is an FPF-visible inversion.
Assume $q<z(q)$. To prove (d), it remains to 
show that $z(q) = r+1=m$.
It cannot hold that $r<z(q)-1$, since then either $(q,r+1)$ or $(r+1,z(q))$ 
would be an FPF-visible inversion of $z$, contradicting the maximality of $(q,r)$. 
It also cannot hold that $z(q) < r$, as then 
$(z(q),r)$ would be an FPF-visible inversion of $z$. Hence $r=z(q)-1$.
If $j > z(q)$, then 
since $z(i) < q$ for all $q <i < z(q)$ and since $(z(q), j)$ cannot be an FPF-visible inversion of $z$,
we must have $z(j) > z(q)$.
From  this observation and the fact that 
 $z$ has no FPF-visible descents greater than $q$, 
 we deduce that $z(j) = \wfpf(j)$ for all $j > z(q)$, which implies that $z(q) = m$ as required.
\end{proof}

\begin{definition}\label{fpsi-def}
Let $\fpsi :\F_\ZZ-\{\wfpf\} \to \F_\ZZ$ be the map  $\fpsi : z\mapsto (q,r)z(q,r)$
where $(q,r)$
 is the maximal FPF-visible inversion of $z$.
\end{definition}

\begin{remark}\label{fpf-remark}
Suppose $z \in \F_\ZZ-\{\wfpf\}$ has maximal FPF-visible inversion $(q,r)$. Let $p=z(r)$ and
$y=\fpsi(z) = (q,r)z(q,r)$
and  write $m$ for the largest even integer such that $z(m) \neq m-1$.
The two cases of Lemma~\ref{dichotomy-lem}~(d) correspond to the following pictures:
\ben
\item[(a)]
If $z(q) < q < r\leq m$ then $y$ and $z$ may be represented as
\[
\ba
\\
\\
z &\ =\
\arcstartc{.2}
{
*{\dots}&
*{.} \arc{2}{rrrrrrrrrrr}&
*{\dots}&
*{.} \arc{3}{rrrrrrrrrr}&
*{\dots}&
*{\bullet} \arc{3}{rrrrrrrrrr} &
*{\dots} &
*{\bullet} \arc{1}{rrrr} &
*{\dots} &
*{.} \arc{2}{rrrrrrrr}&
*{\dots} &
*{\bullet} &
*{.} &
*{.} &
*{\dots} &
*{\bullet} &
*{\dots} &
*{.} &
*{.} \arc{.4}{r} &
*{.} &
*{.} \arc{.4}{r} &
*{.} &
*{\dots}
\\
&
&
&
&
&
p
&
&
&
&
&
&
q
&
&
&
&
r
&
&
m
}
\arcstop
\\
\\
\\
y&\ =\  
\arcstartc{.2}
{
*{\dots}&
*{.} \arc{2}{rrrrrrrrrrr}&
*{\dots}&
*{.} \arc{3}{rrrrrrrrrr}&
*{\dots}&
*{\bullet} \arc{1}{rrrrrr} &
*{\dots} &
*{\bullet} \arc{3}{rrrrrrrr} &
*{\dots} &
*{.} \arc{2}{rrrrrrrr}&
*{\dots} &
*{\bullet} &
*{.} &
*{.} &
*{\dots} &
*{\bullet} &
*{\dots} &
*{.} &
*{.} \arc{.4}{r} &
*{.} &
*{.} \arc{.4}{r} &
*{.} &
*{\dots}
\\
&
&
&
&
&
p
&
&
&
&
&
&
q
&
&
&
&
r
&
&
m
}
\arcstop
\ea
\]
We have
 $z(q+1) < z(q+2) < \dots <z(r) < z(q)$, and if $r<m$ then $z(q)<z(r+1) < z(r+2) < \dots < z(m)  < q$.

\item[(b)] 
 If $q<z(q) =r+1=m$ then $y$ and $z$ may be represented as
\[
\ba
\\
\\
z &\ =\  
\arcstartc{.3}
{
*{\dots}&
*{.} \arc{2}{rrrrrrr}&
*{\dots}&
*{.} \arc{3}{rrrrrr}&
*{\dots}&
*{\bullet} \arc{3}{rrrrrr} &
*{\dots} &
*{\bullet} \arc{1}{rrrrr} &
*{.} &
*{.} &
*{\dots} &
*{\bullet} &
*{\bullet} &
*{.} \arc{.4}{r} &
*{.} &
*{.} \arc{.4}{r} &
*{.} &
*{\dots}
\\
&
&
&
&
&
p
&
&
q
&
&
&
&
r
&
m
&
}
\arcstop
\\
\\
\\
y
&\ =\ 
\arcstartc{.3}
{
*{\dots}&
*{.} \arc{2}{rrrrrrr}&
*{\dots}&
*{.} \arc{3}{rrrrrr}&
*{\dots}&
*{\bullet} \arc{1}{rr} &
*{\dots} &
*{\bullet}] &
*{.} &
*{.} &
*{\dots} &
*{\bullet}  \arc{.4}{r}  &
*{\bullet} &
*{.} \arc{.4}{r} &
*{.} &
*{.} \arc{.4}{r} &
*{.} &
*{\dots}
\\
&
&
&
&
&
p
&
&
q
&
&
&
&
r
&
m
&
}
\arcstop
\ea
\]
Here, we have $z(q+1) < z(q+2) < \dots < z(r)  =p < q$, so $z(i)<q$ whenever $p<i<q$.
\een
\end{remark}

Recall the definition of $\beta_{\min}(z)$ from Lemma~\ref{minatomfpf-lem}.

\begin{proposition}
If $(q,r)$ is the maximal FPF-visible inversion of $z \in \F_\infty-\{\wfpf\}$ and
$w = \beta_{\min}(z)$ is the minimal element of $\cAfpf(z)$,
then $w(q,r) = \beta_{\min}(\fpsi(z))$ is the minimal atom of $\fpsi(z)$.
\end{proposition}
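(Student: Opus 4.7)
The plan is to use Lemma~\ref{minatomfpf-lem}, which characterizes $\beta_{\min}(z)$ by writing $\beta_{\min}(z)^{-1}$ as the permutation with one-line representation $a_1 b_1 a_2 b_2 \cdots$, where $(a_1, b_1), (a_2, b_2), \ldots$ are the positive cycles of $z$ listed in increasing order of first entry. Let $(a_1', b_1'), (a_2', b_2'), \ldots$ similarly enumerate the positive cycles of $\fpsi(z)$. The identity $\beta_{\min}(\fpsi(z)) = w(q,r)$ is equivalent to $\beta_{\min}(\fpsi(z))^{-1} = (q,r)\, w^{-1}$, and since left multiplication by a transposition swaps outputs, this in turn is equivalent to the statement that the sequence $a_1' b_1' a_2' b_2' \cdots$ is obtained from $a_1 b_1 a_2 b_2 \cdots$ by interchanging the single entries equal to $q$ and $r$.

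Next I would compute the cycle structure of $\fpsi(z) = (q,r)z(q,r)$ directly from the two cases of Lemma~\ref{dichotomy-lem}. Writing $p = z(r)$, in case (a) the two cycles of $z$ touching $\{q, r\}$ are $(z(q), q)$ and $(p, r)$ with $p < z(q) < q < r$; a short calculation shows these are replaced in $\fpsi(z)$ by $(p, q)$ and $(z(q), r)$, with all other positive cycles unchanged. In case (b), where $z(q) = r+1 = m$, the same computation turns the cycles $(p, r)$ and $(q, r+1)$ of $z$ into $(p, q)$ and $(r, r+1)$ of $\fpsi(z)$.

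In case (a), the first entries of the two modified cycles remain $p$ and $z(q)$, in the same order, so these cycles occupy unchanged positions in the ordered list of cycles. The only changes to the sequence $a_1 b_1 a_2 b_2 \cdots$ are then that the entry $r$ contributed by $(p, r)$ becomes $q$ and the entry $q$ contributed by $(z(q), q)$ becomes $r$, which is exactly the required swap. Case (b) is more delicate, because the first entry of $(q, r+1)$ genuinely changes from $q$ to $r$. Here I would check that no cycle of $z$ or of $\fpsi(z)$ other than this one has first entry in the interval $(q, r]$: for any $q < i < r$, the monotonicity $z(q+1) < z(q+2) < \cdots < z(r) = p < q$ supplied by Lemma~\ref{dichotomy-lem} forces $z(i) < p < i$, so $i$ is a second entry and not a first entry. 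Consequently $(r, r+1)$ occupies in the ordered list of cycles of $\fpsi(z)$ the same slot that $(q, r+1)$ did in the ordered list of cycles of $z$, and once again the two one-line representations differ only by exchanging the unique occurrences of $q$ and $r$, with the entry $r+1$ preserved.

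The main obstacle is precisely this bookkeeping in case (b): one must not only verify that two cycles have been altered in the prescribed way but also that the global ordering of cycles by first entry is untouched. This reduces to the ``nothing intervenes in $(q, r]$'' observation above, which is really a restatement of the maximality of the FPF-visible inversion $(q, r)$ combined with the monotonicity in Lemma~\ref{dichotomy-lem}. Once both cases are handled, Lemma~\ref{minatomfpf-lem} applied to $\fpsi(z)$ gives $\beta_{\min}(\fpsi(z))^{-1} = (q, r)\, w^{-1}$, and inverting yields $\beta_{\min}(\fpsi(z)) = w(q, r)$, as claimed.
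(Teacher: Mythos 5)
Your proof is correct and takes essentially the same route as the paper: both reduce, via Lemma~\ref{minatomfpf-lem}, to showing that interchanging $q$ and $r$ in the word $a_1b_1a_2b_2\cdots$ yields the corresponding word for $\fpsi(z)$, and then verify this from the two cases of Remark~\ref{fpf-remark}. The paper dismisses the cycle bookkeeping as straightforward, whereas you have written it out explicitly --- including the one genuinely non-trivial point, namely the check in case (b) that no other cycle has first entry in $(q,r]$, so the sorted order of cycles is preserved.
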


\begin{proof}
Let $\Cyc_\PP(z) = \{ (a_i,b_i) : i \in \PP\}$ and $\Cyc_\PP(\fpsi(z)) = \{ (c_i,d_i): i \in \PP\}$
where $a_1<a_2<\dots$ and $c_1<c_2<\dots$.
By Lemma \ref{minatomfpf-lem}, it suffices to show that interchanging $q$ and $r$  in the word
$a_1b_1a_2b_2\cdots$ gives  $c_1d_1c_2d_2\cdots$,
which is   straightforward  from  Remark~\ref{fpf-remark}.
\end{proof}

Recall the definition of the sets $\Pfpf^+(y,r)$ and $\Pfpf^-(y,r)$ from \eqref{Pfpf-def}.

\begin{lemma}\label{ft-lem00}
If  $z \in \F_\ZZ-\{\wfpf\}$ has  maximal FPF-visible inversion $(q,r)$ then
 $\Pfpf^+(\fpsi(z),q)= \{z\}$.
\end{lemma}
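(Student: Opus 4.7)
The plan is to verify both inclusions. Write $y = \fpsi(z) = (q,r)z(q,r)$; since $z = (q,r)y(q,r)$ and $r > q$, membership $z \in \Pfpf^+(y, q)$ reduces to checking $\ellfpf(z) = \ellfpf(y) + 1$, which I would verify using Proposition~\ref{fpfbruhatcover-prop} applied to $y$ at the pair $(q,r)$. In case (a) of Remark~\ref{fpf-remark} the relevant quadruple is $A = \{p, z(q), q, r\}$ (in increasing order), and one computes $[y]_A = (1,3)(2,4) \lessdot_\F (1,4)(2,3) = [z]_A$ with $[(q,r)]_A = (3,4)$; in case (b) the quadruple $A = \{p, q, r, m\}$ gives $[y]_A = (1,2)(3,4) \lessdot_\F (1,3)(2,4) = [z]_A$ with $[(q,r)]_A = (2,3)$. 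The fact that $y(e) = z(e) < p = y(q)$ for all $q < e < r$ in both cases (read off from Remark~\ref{fpf-remark}) makes condition (a) of the proposition automatic, while the tableaux above are exactly the two configurations appearing in the remark following Proposition~\ref{fpfbruhatcover-prop}.

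For uniqueness, suppose $w = (q,j)y(q,j) \in \Pfpf^+(y, q)$ for some $j > q$; I would show $j = r$ by examining the ranges $q < j < r$, $r < j \le m$, and $j > m$ using the explicit cycle structure of $y$ obtained from $z$ by the conjugation $(q,r)$. For $q < j < r$, in either case of Remark~\ref{fpf-remark} the chain $y(q+1) < \cdots < y(r-1) < p$ forces $y(j) < p = y(q)$, violating the strict inequality of condition (a) of Proposition~\ref{fpfbruhatcover-prop}. For $r < j$, taking $e = r$ gives $p < y(r) < y(j)$: in case~(a) because $y(r) = z(q)$ while $z(r+1), \dots, z(m), \wfpf(m+1), \wfpf(m+2), \dots$ all strictly exceed $z(q)$; in case~(b) because $y(r) = m$ and $\wfpf(j) > m$ for $j > m$. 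Hence condition (a) is again violated.

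The one borderline scenario is $j = m$ in case~(b) of Remark~\ref{fpf-remark}, where condition (a) actually holds: $y(q) = p < r = y(m)$, and no other $y(e)$ with $q < e < m$ lies in the interval $(p, r)$ since $(r,m)$ is itself a cycle of $y$ and the remaining values are all less than $p$. To rule this out I would invoke condition (b) of Proposition~\ref{fpfbruhatcover-prop}: on $A = \{p, q, r, m\}$ one computes $[y]_A = (1,2)(3,4)$ and $[w]_A = (1,4)(2,3)$, and the pair $(1,2)(3,4), (1,4)(2,3)$ is not a Bruhat cover in $\F_4$ (they differ in $\ellfpf$ by $2$ rather than $1$). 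The main obstacle is precisely this borderline case: unlike the other ranges, it survives the inequality tests of condition (a) and is eliminated only by analyzing the local $4$-vertex matching using condition (b). Once $j = r$ is forced, $w = (q,r)y(q,r) = z$, completing the proof.
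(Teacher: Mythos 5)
Your argument is correct and follows exactly the route the paper intends: its proof of Lemma~\ref{ft-lem00} simply cites Proposition~\ref{fpfbruhatcover-prop}, Remark~\ref{fpf-remark}, and the definitions of $\fpsi$ and $\Pfpf^+(y,q)$, and your write-up is a careful expansion of precisely those ingredients. In particular, your identification of the one borderline case $j=m$ in case~(b), which passes condition~(a) but fails condition~(b) of Proposition~\ref{fpfbruhatcover-prop}, is the genuine content hidden behind the paper's one-line proof.
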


\begin{proof}
This holds by Proposition~\ref{fpfbruhatcover-prop},
Remark~\ref{fpf-remark}, and the definitions of $\fpsi(z)$ and $\Pfpf^+(y,q)$.
\end{proof}

   
For $z \in \F_\ZZ$  let
\be\label{fT-eq}   \fT_1(z) \omdef= \begin{cases}
\varnothing &\text{if  $z$ is FPF-Grassmannian} \\
 \Pfpf^-(y, p) &\text{otherwise}
 \end{cases}
 \ee
 where in the second case, we define $y=\fpsi(z)$ and $p =y(q)$
 where  $q$ is the maximal FPF-visible descent of $z$.

\begin{definition}\label{fT-def}
The \emph{FPF-involution Lascoux-Sch\"utzenberger tree} $\fT(z)$ of $z \in \F_\ZZ$
is the
tree with root $z$, in which the children of any vertex $v \in \F_\ZZ$
are the elements of $\fT_1(v)$.
\end{definition}

\begin{remark}
As the name suggests, our definition is inspired by the classical
construction of the \emph{Lascoux-Sch\"utzenberger tree}
for ordinary Stanley symmetric functions; see \cite{LS,Little} or \cite[\S4.2]{HMP4}.
\end{remark}

For $z \in \F_n$ we define $\fT(z) = \fT(\ifpf(z))$.
A given involution is allowed to correspond to more than one vertex in $\fT(z)$.
All vertices $v$ in $\fT(z)$ satisfy $\ellfpf(v) = \ellfpf(z)$ by construction,
so if $z \neq \wfpf$ then $\wfpf$ is not a vertex in $\fT(z)$.
An example tree $\fT(z)$ is shown in Figure~\ref{ftree-fig}.

\begin{figure}[h]
\[
\begin{tikzpicture}
%
  \node (b) at (0,0) {
\boxed{\arcstartc{.2}
{
{ }
\\
\\
*{.} \arc{.4}{r}  &
*{.} &
  *{.}  \arc{1.2}{rrrr} &
*{.} \arc{.6}{rr} &
*{\circ} \arc{1.2}{rrrrr} &
*{.} &
*{.} &
*{.} \arc{.8}{rrr}&
*{\bullet} \arc{.8}{rrr}&
*{.} &
*{\bullet} &
*{.} 
}
\arcstop}
};
  \node (c) at (5,0) {
};
\node (d) at (-5,-2) {
};
\node (e) at (0,-2) {
\boxed{\arcstartc{.2}
{
{ }
\\
\\
*{.}  \arc{.4}{r} &
*{.} &
  *{.} \arc{1.2}{rrrrr} &
*{.} \arc{.6}{rr} &
*{\circ}  \arc{1.2}{rrrrr}&
*{.} &
*{.} \arc{.6}{rr}&
*{.} &
*{\bullet} &
*{\bullet} &
*{.} \arc{.4}{r}&
*{.} 
}
\arcstop}
};
\node (f) at (5,-2) {
\boxed{\arcstartc{.2}
{
{ }
\\
\\
*{.}  \arc{.4}{r} &
*{.} &
  *{.} \arc{1.2}{rrrr} &
*{.}   \arc{1.2}{rrrr}&
*{.} \arc{1.2}{rrrrr}&
*{\circ} \arc{.6}{rrr}&
*{.} &
*{.} &
*{\bullet} &
*{\bullet} &
*{.} \arc{.4}{r}&
*{.} 
}
\arcstop}
};
\node (g) at (-5,-4) {
\boxed{\arcstartc{.2}
{
{ }
\\
\\
*{.} \arc{.4}{r}  &
*{.} &
  *{.} \arc{1.2}{rrrrrr} &
*{.}  \arc{.6}{rr}&
*{\circ} \arc{.6}{rrr}&
*{.} &
*{\bullet} \arc{.6}{rrr}&
*{.} &
*{\bullet} &
*{.} &
*{.} \arc{.4}{r}&
*{.} 
}
\arcstop}
};
\node (h) at (0,-4) {
\boxed{\arcstartc{.2}
{
{ }
\\
\\
*{.} \arc{.4}{r}  &
*{.} &
  *{.}\arc{1.2}{rrrrr}  &
*{.}  \arc{1.2}{rrrrr}&
*{.} \arc{.4}{r} &
*{.} &
*{\bullet} \arc{.6}{rrr}&
*{.} &
*{\bullet} &
*{.} &
*{.} \arc{.4}{r}&
*{.} 
}
\arcstop}
};
\node (i) at (5,-4) {
\boxed{\arcstartc{.2}
{
{ }
\\
\\
*{.}   \arc{.4}{r}&
*{.} &
  *{.} \arc{.8}{rrrr} &
*{.}  \arc{1.2}{rrrrr}&
*{.} \arc{.8}{rrr}&
*{\circ}  \arc{.8}{rrrr}&
*{.} &
*{\bullet} &
*{\bullet} &
*{.} &
*{.} \arc{.4}{r}&
*{.} 
}
\arcstop}
};
\node (j) at (-5,-6) {
\boxed{\arcstartc{.2}
{
{ }
\\
\\
*{.}  \arc{.6}{rr} &
*{\circ} \arc{1.2}{rrrrrr}&
  *{.} &
*{.} \arc{.6}{rr} &
*{.} \arc{1.2}{rrrr}&
*{.} &
*{\bullet} \arc{.8}{rrr}&
*{.} &
*{\bullet} &
*{.} &
*{.} \arc{.4}{r} &
*{.} 
}
\arcstop}
};
\node (k) at (0,-6) {
};
\node (l) at (5,-6) {
\boxed{\arcstartc{.2}
{
{ }
\\
\\
*{.}   \arc{.4}{r}&
*{.} &
  *{.} \arc{1.2}{rrrrr}  &
*{.}  \arc{.8}{rrr}&
*{.} \arc{1.2}{rrrr}&
*{\circ}  \arc{1.2}{rrrr}&
*{\bullet} &
*{\bullet} &
*{.} &
*{.} &
*{.} \arc{.4}{r}&
*{.} 
}
\arcstop}
};
  \draw  [-]
  (b) -- (e)
  (b) -- (f)
  (e) -- (g)
  (e) -- (h)
  (f) -- (i)
  (g) -- (j)
  (i) -- (l)
;
\end{tikzpicture}
\]
\caption{The tree $\fT(z)$ for $z =(1,2)(3,7)(4,6)(5,10)(8,11)(9,12)\in \F_{12}\hookrightarrow \F_\ZZ$.
We draw all vertices as elements of $\F_{12}\subset \I_{12}$
for convenience. 
The maximal FPF-visible inversion of each vertex is marked with $\bullet$,
and the minimal FPF-visible descent is marked with $\circ$ (when this is not also maximal).
 By Theorem \ref{fgrass-thm} and Corollary \ref{ft-cor1},
we have
$\Ffpf_{z} = P_{(5,2)} + P_{(4,3)} + P_{(4,2,1)}$.
}
\label{ftree-fig}
\end{figure}

\begin{corollary}\label{ft-cor1}
Suppose $z \in \F_\ZZ$ is a fixed-point-free involution that is not FPF-Grassmannian,
whose maximal FPF-visible descent is $q \in \ZZ$. The following identities then hold:
\ben
\item[(a)] $\Sfpf_z =(x_p+x_q) \Sfpf_{y} +  \sum_{v \in \fT_1(z)} \Sfpf_v$ where $y = \fpsi(z)$ and $p = y(q)$.

\item[(b)] $\Ffpf_z = \sum_{v \in \fT_1(z)} \Ffpf_v$.
\een
\end{corollary}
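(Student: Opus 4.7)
The plan is to recognize the corollary as essentially a direct application of the transition formulas in Theorems~\ref{monkfpf-thm} and \ref{monkfpf-thm2} to the involution $y=\fpsi(z)$, together with the identification of $\Pfpf^+(y,q) = \{z\}$ from Lemma~\ref{ft-lem00}. The whole point of the construction of $\fpsi$ is that it inverts the ``$+$ half'' of the transition formula at a distinguished cycle, leaving the ``$-$ half'' as the children in $\fT(z)$.

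First I would verify that $(p,q)$ is actually a cycle of $y$, i.e., $(p,q)\in \Cyc_\PP(y)$ (for part~(a), where we restrict to $z\in \F_\infty$) or $(p,q)\in \Cyc_\ZZ(y)$ (for part~(b)). Let $(q,r)$ be the maximal FPF-visible inversion of $z$, so $q<r$ and $p = z(r) < \min\{q,z(q)\}$, which in particular gives $p<q<r$ and $p\notin\{q,r\}$. Then $y(q) = (q,r)z(q,r)(q) = (q,r)z(r) = (q,r)(p) = p$, and since $z(p) = r$ (because $z$ is an involution and $z(r)=p$), a parallel computation gives $y(p)=q$. Thus $(p,q)$ is indeed a cycle of $y$, and it lies in $\PP\times\PP$ whenever $z\in \F_\infty$ (as the smaller entry $p$ still exceeds any fixed non-positive index).

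For part~(a), I would apply Theorem~\ref{monkfpf-thm} to $y$ with the cycle $(p,q)\in \Cyc_\PP(y)$ to obtain
\[
(x_p+x_q)\Sfpf_y \;=\; \sum_{v\in \Pfpf^+(y,q)} \Sfpf_v \;-\; \sum_{v\in \Pfpf^-(y,p)} \Sfpf_v.
\]
Lemma~\ref{ft-lem00} collapses the first sum to the single term $\Sfpf_z$, and by definition $\Pfpf^-(y,p) = \fT_1(z)$, so rearranging yields the identity in~(a). For part~(b) the argument is identical: apply Theorem~\ref{monkfpf-thm2} with the same cycle $(p,q)\in \Cyc_\ZZ(y)$ (no positivity of $p$ needed now) to obtain $\sum_{v\in \Pfpf^-(y,p)}\Ffpf_v = \sum_{v\in \Pfpf^+(y,q)}\Ffpf_v$, and again invoke Lemma~\ref{ft-lem00} to identify the right-hand side with $\Ffpf_z$.

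There is no real obstacle here since the work was done upstream: Lemma~\ref{dichotomy-lem} pinned down the shape of the maximal FPF-visible inversion, Lemma~\ref{ft-lem00} used that structure to show $\Pfpf^+(y,q)=\{z\}$, and Theorems~\ref{monkfpf-thm} and \ref{monkfpf-thm2} supply the algebraic identity. The only thing to be careful about is bookkeeping: confirming that $p<q$ with $p,q\in\PP$ when $z\in\F_\infty$, and noting that for $z\in \F_\ZZ\setminus \F_\infty$ part~(a) holds trivially under the convention $\Sfpf_v=0$ for $v\notin \F_\infty$.
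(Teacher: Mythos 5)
Your proposal is correct and follows exactly the paper's own route: the paper's proof of this corollary is a one-line citation of Theorems~\ref{monkfpf-thm} and~\ref{monkfpf-thm2} together with Lemma~\ref{ft-lem00}, which is precisely the argument you spell out (including the useful check that $(p,q)$ is genuinely a cycle of $y=\fpsi(z)$ and the bookkeeping about the convention $\Sfpf_v=0$ off $\F_\infty$).
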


\begin{proof}
The result follows from Theorems \ref{monkfpf-thm} and \ref{monkfpf-thm2} and Lemma \ref{ft-lem00}.
\end{proof}

We would like to show that
the intervals between the minimal and maximal FPF-visible 
descents of the vertices in $\fT(z)$ form a descending chain as one moves down the tree. 
This fails, however:
a child in the tree may have  strictly smaller FPF-visible descents than its parent.
A similar property does hold if 
we instead consider the visible descents of the image of $z \in \F_\ZZ$
under the map $\Imap : \F_\ZZ \to \I_\ZZ$ from Definition~\ref{I-def}.
Recall that a visible descent for $y \in \I_\ZZ$
is an integer $i \in \ZZ$ with
$z(i+1) \leq \min\{i,z(i)\}$.
The following is  \cite[Lemma 4.24]{HMP4}.

\begin{lemma}[See \cite{HMP4}]
\label{minvisdes-lem}
Let $z \in \I_\ZZ-\{1\}$ and suppose $j \in \ZZ$ is the smallest integer such that $z(j)<j$.
Then $j-1$ is the minimal visible descent of $z$.
\end{lemma}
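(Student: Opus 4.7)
The plan is to verify directly that $j-1$ satisfies the two defining inequalities of a visible descent, and then use the minimality hypothesis on $j$ to rule out any smaller candidate.

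First I would check that $j-1$ is itself a visible descent of $z$. Unpacking the definition, this amounts to showing that $z(j) \leq j-1$ and $z(j) \leq z(j-1)$. The first inequality is immediate from the hypothesis $z(j) < j$. For the second, the minimality of $j$ guarantees that $z(j-1) \geq j-1$ (since $j-1 < j$ is not in the set $\{i : z(i) < i\}$), and combining this with $z(j) \leq j-1$ yields $z(j) \leq j-1 \leq z(j-1)$. Hence $z(j) \leq \min\{j-1, z(j-1)\}$, as required.

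Next I would show no integer $i < j-1$ can be a visible descent. Suppose, for contradiction, that such an $i$ exists, so $z(i+1) \leq \min\{i, z(i)\}$. Then in particular $z(i+1) \leq i < i+1$, so $i+1$ lies in the set $\{k \in \ZZ : z(k) < k\}$. But $i+1 \leq j-1 < j$, contradicting the minimality of $j$. Therefore $j-1$ is the smallest visible descent.

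There is no serious obstacle here: the entire argument is a direct unpacking of the definitions together with the minimality of $j$. The only point that requires any care is noting that the hypothesis $z \in \I_\ZZ - \{1\}$ (together with $z$ having finite support) ensures that such a $j$ actually exists, which justifies the statement being nonvacuous. Once that is observed, the proof is essentially two lines.
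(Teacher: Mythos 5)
Your proof is correct. Note that the paper itself gives no proof of this lemma --- it is imported verbatim as \cite[Lemma 4.24]{HMP4} --- but your argument is exactly the direct definition-unpacking one expects: minimality of $j$ gives $z(j-1)\geq j-1$, which together with $z(j)\leq j-1$ shows $j-1$ is a visible descent, and any visible descent $i<j-1$ would force $z(i+1)<i+1$ with $i+1<j$, contradicting minimality. Your remark about existence of $j$ (finite support plus $z\neq 1$) is also the right observation to make the statement nonvacuous.
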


\begin{lemma}\label{fpf-min-des-lem}
Let $z \in \F_\ZZ-\{\wfpf\}$ and suppose $(i,j) \in \Cyc_\ZZ(z)$
is the cycle with $j$ minimal such that $i < b < j$ for some $(a,b) \in \Cyc_\ZZ(z)$.
Then $j-1$ is the minimal visible descent of $\Imap(z)$.
\end{lemma}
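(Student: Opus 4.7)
The plan is to derive this lemma directly from Lemma \ref{minvisdes-lem} by unpacking the definition of the map $\I : \F_\ZZ \to \I_\ZZ$ given in Definition \ref{I-def}.

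First I would verify that $\I(z) \neq 1$ so that Lemma \ref{minvisdes-lem} applies. This is precisely the proposition stated just before Proposition \ref{semiinverse-prop}, which asserts $\I(z) = 1$ if and only if $z = \wfpf$. Since we are assuming $z \in \F_\ZZ - \{\wfpf\}$, we have $\I(z) \neq 1$, so Lemma \ref{minvisdes-lem} tells us that the minimal visible descent of $\I(z)$ is $k-1$, where $k$ is the smallest integer satisfying $\I(z)(k) < k$.

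Next I would translate the condition $\I(z)(k) < k$ into a statement about cycles of $z$. By Definition \ref{I-def}, the nontrivial cycles of $\I(z)$ are exactly the pairs $(p,q) \in \Cyc_\ZZ(z)$ for which there exists some $(a,b) \in \Cyc_\ZZ(z)$ with $p < b < q$. Therefore $\I(z)(k) < k$ holds if and only if $k$ is not a fixed point of $\I(z)$ and $\I(z)(k) = \I(z)^{-1}(k) < k$, which happens if and only if $(\I(z)(k), k) \in \Cyc_\ZZ(z)$ and there is some $(a,b) \in \Cyc_\ZZ(z)$ with $\I(z)(k) < b < k$. Setting $i = \I(z)(k)$ and $j = k$, the minimality of $k$ matches precisely the minimality condition placed on $j$ in the statement of the lemma.

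Combining these two steps, the minimal visible descent of $\I(z)$ equals $j - 1$, as desired. No step here poses an obstacle; the result is essentially a direct translation between the minimal visible descent of $\I(z)$ as characterized in Lemma \ref{minvisdes-lem} and the description of the nontrivial cycles of $\I(z)$ given by Definition \ref{I-def}.
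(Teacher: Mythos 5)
Your proof is correct and takes essentially the same route as the paper, which simply observes that the $j$ in the statement is minimal with $\I(z)(j)<j$ and then invokes Lemma~\ref{minvisdes-lem}; you have merely spelled out the translation through Definition~\ref{I-def} in more detail.
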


\begin{proof}
The claim follows by the preceding lemma since $j$ is minimal such that $\Imap(z)(j)<j$.
\end{proof}

\begin{lemma}\label{fpf-vis-lem1}
Let $z \in \F_\ZZ$. A number $ i \in \ZZ$ is a visible descent of $\Imap(z)$ if and only if
one of the following conditions holds:
\ben
\item[(a)] $z(i+1) < z(i) < i$.
\item[(b)] $z(i) < z(i+1) < i$ and $\{ t \in \ZZ : z(i) < t < i\}  \subset \{ z(t) : i < t \}$.
\item[(c)] $z(i+1) < i < z(i)$ and $\{ t \in \ZZ: z(i+1) < t < i+1\} \not \subset \{ z(t) : i+1 <t \}$.
\een
\end{lemma}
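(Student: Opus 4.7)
The plan is to set $y=\I(z)$ and unpack the defining inequality $y(i+1)\le \min\{i,y(i)\}$ by case analysis, using the explicit description of $\I(z)$ given after Definition~\ref{I-def}: an integer $j$ is fixed by $y$ if and only if $\max\{j,z(j)\}<z(k)$ for all $k$ with $\min\{j,z(j)\}<k<\max\{j,z(j)\}$. The preliminary reduction is that $y(i+1)\le i<i+1$ forces $i+1$ to not be a fixed point of $y$, so $y(i+1)=z(i+1)$ and $z(i+1)<i+1$; in particular, the cycle of $z$ through $i+1$ is $(z(i+1),i+1)$.

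\emph{Case (I): $i$ is fixed by $y$.} Then $y(i)=i$ and the visibility condition reduces to $z(i+1)\le i$, which is automatic. I would first rule out $z(i)>i$: if this held, then since $z(i)=i+1$ would force $i+1$ to be $y$-fixed as well, we would have $z(i)\ge i+2$ and $i+1\in(i,z(i))$, whence the fixedness criterion forces $z(i+1)>z(i)\ge i+1$, contradicting $z(i+1)<i+1$. So $z(i)<i$. If $z(i+1)<z(i)$ then condition (a) holds; otherwise $z(i+1)>z(i)$, and ruling out $z(i+1)=i$ (which would force $z(i)=i+1$) gives $z(i)<z(i+1)<i$. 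The set containment in (b) is then the fixedness criterion ``$z(t)>i$ for all $t\in(z(i),i)$'' rewritten via the involution identity $\{z(s):s>i\}=\{t:z(t)>i\}$.

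\emph{Case (II): $i$ is not fixed by $y$.} Then $y(i)=z(i)$ and the condition reads $z(i+1)\le\min\{i,z(i)\}$. If $z(i)<i$ then $z(i+1)<z(i)<i$, giving (a). If $z(i)>i$, then after ruling out $z(i+1)=i$ (which would force both $i$ and $i+1$ to be $y$-fixed) we have $z(i+1)<i<z(i)$. The set non-containment in (c) is equivalent to ``$i+1$ is not $y$-fixed,'' which is automatic since $z(i+1)\neq i+1$. The subtlety here is that the fixedness criterion gives ``$\exists t\in(z(i+1),i+1)$ with $z(t)<t$'' while the set condition gives ``$\exists t\in(z(i+1),i+1)$ with $z(t)\le i$''; these are equivalent by an involution swap, since from any $t$ with $z(t)>t$ and $z(t)\le i$, the partner $t':=z(t)$ also lies in $(z(i+1),i+1)$ and satisfies $z(t')=t<t'$.

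For the converse direction, each of conditions (a), (b), (c) is checked directly to imply $y(i+1)\le\min\{i,y(i)\}$ using the fixedness criterion: in (a) and the $z(i)<i$ portion of (b) one shows $i+1$ is not $y$-fixed via $t=i$, while in (c) the non-containment is exactly what is needed. The main obstacle is the equivalence in (c) between the set-theoretic non-containment and the existence of a $t$ with $z(t)<t$; the involution-swap argument described above handles this cleanly, but it is the only step requiring more than unwinding definitions.
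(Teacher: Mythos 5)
Your argument is correct and follows the same route as the paper's (much terser) proof: both first characterize visible descents of $\I(z)$ by the three inequality patterns together with the conditions that $i$ is, respectively $i+1$ is not, a fixed point of $\I(z)$, and then translate those fixedness conditions into the set containments via the involution identity $\{z(t): t>i\}=\{s : z(s)>i\}$. One cosmetic slip: in Case (II) the fact that $i+1$ is not fixed by $\I(z)$ follows from your preliminary reduction $y(i+1)\le i<i+1$, not from ``$z(i+1)\neq i+1$'' as written.
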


\begin{proof}
It is straightforward to check that
$i \in \ZZ$ is a visible descent of $\Imap(z)$ if and only if either
(a) $z(i+1) < z(i) < i$;
(b) $z(i) < z(i+1) < i$ and $i$ is a fixed point of $\Imap(z)$;
or (c) $z(i+1) < i < z(i)$ and $i+1$ is not a fixed point of $\Imap(z)$.
The given conditions are equivalent to these statements.
\end{proof}

\begin{corollary}\label{fpf-vis-cor1}
Let $y,z \in \F_\ZZ$ and $i,j \in \ZZ$ with $i<j$. Suppose $y(t) = z(t)$ for all integers $t>i$.
Then $j$ is a visible descent of $\Imap(y)$ if and only if $j$ is a visible descent of $\Imap(z)$.
\end{corollary}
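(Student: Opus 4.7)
The plan is to reduce the statement directly to Lemma \ref{fpf-vis-lem1}, which classifies visible descents of $\I(z)$ into three mutually exclusive cases (a), (b), (c). After relabeling the variable in that lemma from $i$ to $j$ (to avoid clashing with the $i$ appearing in the statement of this corollary), I would argue that each of the conditions in (a)--(c) is determined entirely by data of the form $z(t)$ with $t > i$, so that the hypothesis $y(t) = z(t)$ for all $t > i$ makes them equivalent for $y$ and $z$.

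More concretely, since $j > i$, the values $z(j)$ and $z(j+1)$ equal $y(j)$ and $y(j+1)$. This takes care of the inequalities appearing in (a), (b), and (c). The only nontrivial part is the two set-theoretic conditions. In (b) one requires $\{t \in \ZZ : z(j) < t < j\} \subset \{z(t) : j < t\}$; the left-hand side depends only on $z(j)$ and $j$, and the right-hand side is $\{z(t) : t > j\}$, which equals $\{y(t) : t > j\}$ because $t > j > i$ forces $t > i$. In (c) one requires $\{t \in \ZZ : z(j+1) < t < j+1\} \not\subset \{z(t) : j+1 < t\}$; again the left side is fixed by $z(j+1) = y(j+1)$, and the right side equals $\{y(t) : t > j+1\}$ by the same argument, since $t > j+1 > i$.

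Combining these observations, the three conditions of Lemma \ref{fpf-vis-lem1} at position $j$ hold for $z$ if and only if they hold for $y$. Therefore $j$ is a visible descent of $\I(z)$ if and only if $j$ is a visible descent of $\I(y)$, as desired.

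I do not anticipate any real obstacles here: once Lemma \ref{fpf-vis-lem1} is in hand, the corollary is essentially a locality statement asserting that whether $\I(\cdot)$ has a visible descent at $j$ only looks at values at positions $\geq j > i$. The only delicate point is verifying that the subset conditions in cases (b) and (c) involve only indices $t > j \geq i+1 > i$ (and not, e.g., values $z(t)$ for small $t$), which is immediate from how those conditions are written.
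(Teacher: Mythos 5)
Your proof is correct and takes essentially the same approach as the paper: the paper's own proof is the one-line observation that, by Lemma~\ref{fpf-vis-lem1}, whether $j$ is a visible descent of $\I(z)$ depends only on the values $z(t)$ for $t \geq j$, all of which agree with $y(t)$ since $j > i$. You have simply spelled out the same locality argument case by case, which is fine.
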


\begin{proof}
By Lemma~\ref{fpf-vis-lem1},
 whether or not $j$ is a visible descent of $\Imap(z)$ depends only
on the action of $z$ on integers greater than or equal to $j$. 
\end{proof}

\begin{corollary}\label{fpf-vis-cor2}
Let $z \in \F_\ZZ$ and
suppose $i$ is a visible descent of $\Imap(z)$. Then either $i$ or $i-1$ is an FPF-visible descent of $z$.
Therefore, if $j$ is the maximal FPF-visible descent of $z$, then $i \leq j+1$.
 \end{corollary}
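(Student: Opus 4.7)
The plan is to prove the first assertion by a direct case analysis using the characterization in Lemma \ref{fpf-vis-lem1}, which states that $i$ is a visible descent of $\I(z)$ if and only if one of three mutually exclusive conditions (a), (b), (c) holds. The second assertion will then follow immediately: if $i$ is a visible descent of $\I(z)$, then $i$ or $i-1$ is an FPF-visible descent of $z$, hence $i - 1 \le j$, so $i \le j+1$.

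Recall that $k$ is an FPF-visible descent of $z$ precisely when $z(k+1) < \min\{k, z(k)\}$. In case (a), we have $z(i+1) < z(i) < i$, so $\min\{i, z(i)\} = z(i) > z(i+1)$, and therefore $i$ itself is an FPF-visible descent of $z$. In case (c), we have $z(i+1) < i < z(i)$, so $\min\{i,z(i)\} = i > z(i+1)$, and again $i$ is an FPF-visible descent of $z$.

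The interesting case is (b), where $z(i) < z(i+1) < i$ and $\{t \in \ZZ : z(i) < t < i\} \subset \{ z(t) : i < t\}$. Here the relation $z(i) < i$ does not by itself give $z(i) < \min\{i-1, z(i-1)\}$, so we must work to show that $i-1$ is an FPF-visible descent. First observe that $z$ is fixed-point-free, so the cycle containing $i$ cannot be $(i-1,i)$: otherwise $z(i) = i-1$ would force $i-1 < z(i+1) < i$, which is impossible. Hence $z(i) < i-1$. Applying the containment in (b) with $t = i-1$ yields $i-1 = z(s)$ for some $s > i$, so that $z(i-1) = s > i$. Combining, we get $z(i) < i-1 < i < z(i-1)$, and in particular $z(i) < \min\{i-1, z(i-1)\}$, i.e., $i-1$ is an FPF-visible descent of $z$.

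I do not anticipate any real obstacle here: the three cases of Lemma \ref{fpf-vis-lem1} each admit a short direct verification, and the only mildly subtle point is ruling out $z(i) = i-1$ in case (b) before invoking the set-theoretic hypothesis on $z$. Once the case analysis is complete, the final ``therefore'' clause is immediate from the definition of $j$ as the \emph{maximal} FPF-visible descent of $z$.
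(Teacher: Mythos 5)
Your proof is correct and follows essentially the same route as the paper: a case analysis on the three alternatives of Lemma \ref{fpf-vis-lem1}, with $i$ itself an FPF-visible descent in cases (a) and (c), and $i-1$ an FPF-visible descent in case (b) via the containment hypothesis applied at $t=i-1$. The paper states this more tersely, but your filled-in details for case (b) (including the observation that $z(i)<i-1$, which in fact already follows from $z(i)<z(i+1)<i$ being integers) are exactly the verification it leaves implicit.
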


\begin{proof}
It follows from Lemma~\ref{fpf-vis-lem1} that $i$ is an FPF-visible descent of $z$ unless
$z(i) < z(i+1) < i$ and $\{ t \in \ZZ : z(i) < t < i\}  \subset \{ z(t) : i < t \}$, in which case
 $i-1$ is an FPF-visible descent of $z$.
\end{proof}

The following statement is the first of two key technical lemmas in this section.

\begin{lemma}\label{crux-min-fpf-des-lem}
Let $y \in \F_\ZZ-\{\wfpf\}$ and $(p,q) \in \Cyc_\ZZ(y)$,
and suppose $z = (n,p)y(n,p) \in \Pfpf^-(y, p)$.
\ben
\item[(a)] If $i \in \ZZ\setminus \{ n, y(n), p, q\}$ is such that $\Imap(y)(i) = i$, then
$\Imap(z)(i) = i$.

\item[(b)] If $j$ and $k$ are the minimal visible descents of $\Imap(y)$ and  $\Imap(z)$
and $j \leq q-1$, then $j \leq k$.
\een
\end{lemma}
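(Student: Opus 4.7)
The plan is to apply Proposition \ref{fpfbruhatcover-prop} to reduce to two explicit scenarios for $A = \{a_1 < a_2 < a_3 < a_4\} = \{n, y(n), p, q\}$, and then analyze how the larger elements of the $y$- and $v$-cycles on $A$ differ. Since $y(A) = A$ and $n < p < q$, checking the two allowed cover types from Proposition \ref{fpfbruhatcover-prop} against $p < q$ leaves exactly two configurations: (Case 1) $m < n < p < q$ with $y$-cycles $(m,n), (p,q)$ on $A$ and $v$-cycles $(m,p), (n,q)$ on $A$; and (Case 2) $n < p < m < q$ with $y$-cycles $(n,m), (p,q)$ on $A$ and $v$-cycles $(n,q), (p,m)$ on $A$.

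For part (a), the key observation is that whether an integer $i$ with $i < y(i)$ is a fixed point of $\I(y)$ (or $\I(v)$) depends only on whether any cycle's larger element lies in the open interval $(i, y(i))$. The $A$-cycle larger elements are $\{n, q\}$ for $y$ and $\{p, q\}$ for $v$ in Case 1, and the common set $\{m, q\}$ for both $y$ and $v$ in Case 2. Since $i \notin A$ implies $y(i) = v(i) \notin A$ and the non-$A$ cycles of $y$ and $v$ coincide, Case 2 is immediate. In Case 1, I will rule out the offending scenario $p \in (i, y(i))$: the hypothesis $\I(y)(i) = i$ forces $n \notin (i, y(i))$, and combined with $n < p$ this yields $n \leq i$, so $i \in (n, p)$; likewise $q \notin (i, y(i))$ and $y(i) \notin A$ give $y(i) < q$, hence $y(i) \in (m, q)$. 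This contradicts condition (a) of Proposition \ref{fpfbruhatcover-prop}.

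For part (b), assuming for contradiction $k < j$, I will show $\I(v)(l) \geq l$ for every $l \leq j$, contradicting $\I(v)(k+1) < k+1$ coming from Lemma \ref{minvisdes-lem}. If $l \notin A$, then $v(l) = y(l)$: either $y(l) > l$ and $\I(v)(l) \geq l$ trivially, or $y(l) < l$, in which case $\I(y)(l) \geq l$ forces $\I(y)(l) = l$, whence part (a) yields $\I(v)(l) = l$. If $l \in A$, then $l \neq q$ since $q > j$. For $l \in \{m, n\}$ in Case 1 or $l \in \{n, p\}$ in Case 2, the cycles of $v$ give $v(l) > l$ directly. The remaining subcases are $l = p$ (Case 1) and $l = m$ (Case 2), where $v(l) < l$; here $\I(v)(l) \geq l$ reduces to showing that no cycle of $v$ has its larger element in $(v(l), l)$. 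The other $A$-cycle of $v$ contributes $q \notin (v(l), l)$. In Case 2, the hypothesis $\I(y)(m) \geq m$ alone forces the non-$A$ cycles of $y$ (hence of $v$) to avoid $(a_1, a_3) \supseteq (a_2, a_3) = (v(l), l)$. In Case 1, $\I(y)(n) \geq n$ clears non-$A$ larger elements from $(a_1, a_2)$, and any residual candidate cycle $(b', b)$ with $b \in (a_2, a_3)$ is eliminated by combining condition (a) of Proposition \ref{fpfbruhatcover-prop}—which forces $b' \leq a_1$—with $\I(y)(b) \geq b$, since then the larger element $a_2$ of the $y$-cycle $(a_1, a_2)$ would lie inside the forbidden interval $(b', b)$.

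The main obstacle is the final subcase of part (b)—eliminating obstructing non-$A$ cycle larger elements from the short interval $(a_2, a_3)$ in Case 1—which is the one place where the Bruhat cover condition must be combined with two separate applications of $\I(y)$-fixedness (at the index $l$ itself and at the putative obstructor $b$) in order to reach a contradiction.
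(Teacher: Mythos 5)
Your proposal is correct and follows essentially the same route as the paper's proof: the same two cover configurations from Proposition~\ref{fpfbruhatcover-prop}, part (a) reduced to ruling out the single discrepant larger element via condition (a) of that proposition, and part (b) reduced to the one problematic element of $A$ (namely $p$ in your Case 1 and $y(n)$ in your Case 2), handled by combining the cover condition with the minimality of $j$. The only difference is organizational — you argue part (b) pointwise by contradiction where the paper compares $\min S_y$ and $\min S_v$ via the sets $T$ and $U$ — but the underlying argument is the same.
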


\begin{remark}
Part (b) is false if $j \geq q$: consider $y = (6,7)\wfpf (6,7)$ and $(n,p,q) = (2,3,4)$.
There is no analogous inequality governing the minimal FPF-visible descents of $y$ and $z$.
\end{remark}

\begin{proof}
Since $y \lessdot_\F z = (n,p)y(n,p)\in \Pfpf^-(y, p) $,
it follows from Proposition~\ref{fpfbruhatcover-prop}
that either $y(n) < n < p < q$, in which case $ n < p<z(p) < q=z(n)$ and $y$ and $z$ 
correspond to the diagrams
\be\label{case1-eq}
y \ =\
\arcstartc{.1}
{
*{\dots}&
*{\bullet} \arc{1}{rr}&
*{\dots}&
*{\bullet} &
*{\dots}&
*{\bullet} \arc{1}{rr}&
*{\dots}&
*{\bullet}&
*{\dots}
\\
&
&
&
n
&
&
p
&
&
q
}
\arcstop
\qquand
z \ =\  
\arcstartc{.1}
{
*{\dots}&
*{\bullet} \arc{1}{rrrr}&
*{\dots}&
*{\bullet}  \arc{1}{rrrr}&
*{\dots}&
*{\bullet}&
*{\dots}&
*{\bullet}&
*{\dots}
\\
&
&
&
n
&
&
p
&
&
q
}
\arcstop
\ee
 or 
$n <p < y(n) < q$, in which case $ n < p<z(p) < q=z(n)$ and
 we instead have 
\be
\label{case2-eq}
y  \ =\  
\arcstartc{.1}
{
*{\dots}&
*{\bullet} \arc{1}{rrrr}&
*{\dots}&
*{\bullet}  \arc{1}{rrrr}&
*{\dots}&
*{\bullet}&
*{\dots}&
*{\bullet}&
*{\dots}
\\
&
n
&
&
p
&
&
&
&
q
}
\arcstop
\qquand
z  \ =\  
\arcstartc{.1}
{
*{\dots}&
*{\bullet} \arc{1}{rrrrrr}&
*{\dots}&
*{\bullet}  \arc{.5}{rr}&
*{\dots}&
*{\bullet}&
*{\dots}&
*{\bullet}&
*{\dots}
\\
&
n
&
&
p
&
&
&
&
q
}
\arcstop
\ee
Let $A = \{n, y(n),p,q\} = \{ n,p,z(p), q\}$ and note that $y(i) =z(i)$ for all $ i \in \ZZ\setminus A$.
Suppose $(a,b) \in \Cyc_\ZZ(y)$ is  such that $b \notin A$
and $b < y(i)$ for all $a<i<b$, so that $a$ and $b$ are both fixed points of $\Imap(y)$.
Then $(a,b)$ is also a cycle of $z$,
and to prove part (a)
it suffices to check that $b<z(i)$ for all $i \in A$ with $a<i<b$.
This holds if $i \in \{n, y(n)\}$ since then
$y(i) < z(i)$, and we cannot have $a<q<b$ since $y(q) < q$.
Suppose $a<p<b$; it remains to show that $b<z(p)$.
Since $b < y(i)$ for all $a<i<b$ by hypothesis,
it follows that if $y$ and $z$ are as in \eqref{case1-eq} then $n<a<p<b<q$,
and that if $y$ are $z$ are as in \eqref{case2-eq} then $a<p<b<y(n)$.
The first of these cases cannot occur in view of Proposition~\ref{fpfbruhatcover-prop}(a),
since $y \lessdot_\F z$.
In the second case $y(n) = z(p)$ so $b<z(p)$ as needed.

To prove part (b), note that $\wfpf \notin \{y, z\}$
so neither  $\Imap(y)$ nor $\Imap(z)$ is the identity.
Let $j$ and $k$ be the minimal visible descents of $\Imap(y)$ and $\Imap(z)$
and assume $j \leq q-1$.
Write $S_y$ for the set of integers $i \in  \ZZ\setminus A$ such that $\Imap(y)(i) < i $,
and let $T_y = S_y \setminus A$ and $U_y = S_y \cap A$.
Define $S_z$, $T_z$, and $U_z$ similarly.
Lemma~\ref{minvisdes-lem} implies that $j \leq k$ if and only if $\min S_y \leq \min S_z$.
Since $j\leq q-1$ we have $\min S_y \leq q$.
It follows from part (a) that $T_z \subset T_y$, so $\min T_y \leq \min T_z$.

There are two cases to consider. First suppose
 $y(n) < n < p< q$ and $z(p) < n < p < q = z(n)$. It is then evident from \eqref{case1-eq}
that $\{q\} \subset U_z \subset \{p,q\}$. Since $\min S_y \leq q$ by hypothesis,
  to prove 
that $\min S_y \leq \min S_z$
it suffices to show that if $p \in U_z$ then $\min S_y < p$.
 Since $y \lessdot_\F z$,
 neither $y$ nor $z$ can have any cycles $(a,b)$ with $y(n) < a  < p$ and $n<b<p$.
 It follows that if $p \in U_z$ then $y$ and $z$ share a cycle $(a,b)$ with either
 (i) $a<b$ and $ y(n) < b < n$,
 or (ii) $a<y(n) < n < b < p$.
 If (i) occurs then $n \in U_y$ while if
 (ii) occurs then $\min T_y < p$, so $\min S_y < p$ as desired.

Suppose instead that $n < p < y(n) < q$ and $ n < p<z(p) < q=z(n)$.
In view of \eqref{case2-eq}, we then have $\{q\} \subset U_z \subset \{ y(n), q\}$.
As $ \min S_y \leq q$, to prove that $\min S_y \leq \min S_z$
it now suffices to show that if $y(n) \in U_z$ then $y(n) \in U_y$.
This implication is clear from \eqref{case2-eq}, since  if $y(n)=z(p) \in U_z$
then $y$ and $z$ must share a cycle $(a,b)$ with $a<b$ and $p<b<y(n)$.
\end{proof}

\begin{lemma}\label{fpf-min-des-lem2}
Let $y \in \F_\ZZ-\{\wfpf\}$ and $(p,q) \in \Cyc_\ZZ(y)$
and suppose $z = (q,r)y(q,r) \in \Pfpf^+(y, q)$.
The involution $\Imap(y)$ has a visible descent less than $q-1$ if and only if $\Imap(z)$ does,
and in this case the minimal visible descents of $\Imap(y)$ and $\Imap(z)$ are equal.
\end{lemma}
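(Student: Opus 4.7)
The plan is to establish the stronger assertion that for every integer $i < q-1$, $i$ is a visible descent of $\I(y)$ if and only if $i$ is a visible descent of $\I(z)$. Once proved, this yields the lemma: the set of visible descents below $q-1$ is the same for $\I(y)$ and $\I(z)$, so its minimum (when nonempty) coincides, which in turn is also the overall minimum visible descent whenever a visible descent below $q-1$ exists.

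I would begin by describing explicitly how $y$ differs from $z$. Setting $u := y(r)$, conjugation by $(q,r)$ sends the cycles $(p,q)$ and $(r,u)$ of $y$ to the cycles $(p,r)$ and $(q,u)$ of $z$, while $y(t) = z(t)$ for every $t \notin A := \{p, q, r, u\}$. The hypothesis $z \in \Pfpf^+(y,q)$ together with Proposition~\ref{fpfbruhatcover-prop} applied to $A$ forces one of the two configurations $p < q < r < u$ or $p < u < q < r$; in both cases $p < q < r$ and $u > p$.

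Fix $i < q-1$ and compare the three clauses of Lemma~\ref{fpf-vis-lem1} for $\I(y)$ and $\I(z)$. Each clause combines a local inequality involving $y(i), y(i+1)$ (respectively $z(i), z(i+1)$) with a membership test asking whether certain integers $t < i+1$ lie in the image set $\{y(t') : t' > i\}$ (respectively $\{z(t'): t' > i\}$). For the local data, since $i+1 < q < r$, the indices $i$ and $i+1$ can meet $A$ only at $p$, or (in the configuration $p < u < q < r$) at $u$; elsewhere $y$ and $z$ agree. For the image sets, a direct computation of $y(A \cap (i,\infty))$ versus $z(A \cap (i,\infty))$ in each subcase shows that the symmetric difference is always contained in $\{q, r\}$, hence consists only of values strictly greater than $i$, so the membership predicates --- all of which are evaluated at integers $t \leq i < q$ --- return identical truth values for $y$ and $z$.

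It remains to handle the boundary positions where $\{i, i+1\}$ meets $\{p, u\}$ and the local data genuinely disagree. The key observation is that at such positions the values $y(p), z(p), y(u), z(u)$ all lie in $\{q, r\}$ and therefore exceed $i+1$; this renders clauses (a) and (b) of Lemma~\ref{fpf-vis-lem1} inapplicable, so only clause (c) must be compared, and its local inequality and membership test both already match between $y$ and $z$ by the preceding analysis. The most delicate subcases are $p+1 = q$, $u = p+1$, and (in the second configuration) $i+1 = u$; in each of them a direct check shows that neither $y$ nor $z$ makes $i$ a visible descent. The main obstacle is organizing this finite case analysis cleanly, but once done the per-$i$ equivalence is established and the lemma follows.
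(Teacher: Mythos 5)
Your proposal is correct, but it takes a genuinely different and considerably longer route than the paper. The paper's proof is essentially three lines: by Lemma~\ref{fpf-min-des-lem}, whether $\I(w)$ has a visible descent below $q-1$, and the value of its minimal visible descent when it does, are determined entirely by the set $\cC_w$ of cycles $(a,b)\in\Cyc_\ZZ(w)$ with $b<q$; since $q<r$, conjugation by $(q,r)$ only disturbs cycles whose larger endpoint is at least $q$, so $\cC_y=\cC_z$ and the conclusion is immediate. You instead prove the stronger pointwise statement that for every $i<q-1$ the integer $i$ is a visible descent of $\I(y)$ if and only if it is one of $\I(z)$, by running the three clauses of Lemma~\ref{fpf-vis-lem1} through the two configurations $p<q<r<u$ and $p<u<q<r$ forced by Proposition~\ref{fpfbruhatcover-prop}. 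I checked the key steps: the identification of the two configurations is right (the case $q<p$ is excluded because $(p,q)\in\Cyc_\ZZ(y)$ forces $p<q$), the symmetric difference of the image sets $\{y(t):t>i\}$ and $\{z(t):t>i\}$ is indeed contained in $\{q,r\}$ and hence invisible to membership tests at integers below $q$, and the boundary cases where $\{i,i+1\}$ meets $\{p,u\}$ are correctly disposed of since $y$ and $z$ send $p$ and $u$ into $\{q,r\}$, which exceeds $i+1$. Your approach buys the full equality of descent sets below $q-1$ rather than just of their minima, at the cost of a substantial case analysis; the paper's argument is the one to prefer here, since Lemma~\ref{fpf-min-des-lem} already packages exactly the invariant needed.
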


\begin{proof}
Let $\cC_w$ for $w \in \F_\ZZ$ be the set of cycles $(a,b) \in \Cyc_\ZZ(w)$ with $b < q$.
By Lemma~\ref{fpf-min-des-lem}, the set $\cC_w$ determines whether or not
$\Imap(w)$ has a visible descent less than $q-1$ and, when this occurs, the value of
$\Imap(w)$'s smallest visible descent.
Since $q < r$ we have $\cC_y = \cC_z$, so the result follows.
\end{proof}

Our second key technical lemma is the following.

\begin{lemma}\label{crux-fpf-des-lem}
Suppose $z \in \F_\ZZ$ is not FPF-Grassmannian, so that $\fpsi(z)\neq \wfpf$.
Let $(q,r)$ be the maximal FPF-visible inversion of $z$ and define $y = \fpsi(z) = (q,r)z(q,r)$.
\ben
\item[(a)] The maximal visible descent of $\Imap(z)$ is $q$ or $q+1$.
\item[(b)] The maximal visible descent of $\Imap(y)$ is at most $q$.
\item[(c)] The minimal visible descent of $\Imap(y)$ is equal to that of $\Imap(z)$, and is at most $q-1$.
\een
\end{lemma}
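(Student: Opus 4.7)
Part~(a): the upper bound is immediate from Corollary~\ref{fpf-vis-cor2}. For the lower bound, we exhibit $q$ or $q+1$ as a visible descent of $\I(z)$ by direct verification of Lemma~\ref{fpf-vis-lem1}, splitting on the case of Remark~\ref{fpf-remark}. In case~(a), the relation $z(q+1) < z(q) < q$ supplied by Lemma~\ref{dichotomy-lem} is exactly condition~(a) of Lemma~\ref{fpf-vis-lem1} at $i=q$. In case~(b), we test condition~(c) at $i=q$: if the required inclusion fails then $q$ is a visible descent; if it holds, then the complementary containment together with $z(q+1) < z(q+2) < q+1$ places us in condition~(b) of Lemma~\ref{fpf-vis-lem1} at $i=q+1$, giving $q+1$ as a visible descent.

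Part~(b): we rule out any visible descent of $\I(y)$ exceeding $q$ by direct verification of Lemma~\ref{fpf-vis-lem1} for $y$ at each $i>q$. Remark~\ref{fpf-remark} provides a strictly increasing sequence $y(q+1) < y(q+2) < \cdots < y(m)$ bounded above by $q$, which immediately rules out conditions~(a) and~(c) of Lemma~\ref{fpf-vis-lem1} at all such $i$. The subset inclusion demanded by condition~(b) fails because the value $q$ belongs to $\{t : y(i) < t < i\}$ but not to $\{y(s) : s > i\}$: the unique position $s$ with $y(s) = q$ is $s = p$, and $p < q + 1 \leq i$. For $i > m$, the involution $y$ agrees with $\wfpf$ and none of the conditions of Lemma~\ref{fpf-vis-lem1} hold.

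Part~(c): Lemma~\ref{ft-lem00} identifies $z$ as the unique element of $\Pfpf^+(y, q)$, so Lemma~\ref{fpf-min-des-lem2} applies. The plan is to first show that the minimum visible descent of $\I(z)$ is at most $q - 1$, then invoke Lemma~\ref{fpf-min-des-lem2}. Since $z$ is non-FPF-Grassmannian, $\I(z)$ is non-I-Grassmannian and hence has at least two visible descents, so by part~(a) the minimum is at most $q$. To tighten the bound we split on the maximum: if max equals $q$, two distinct visible descents force min $\leq q - 1$; if max equals $q+1$, we are in case~(b) of Remark~\ref{fpf-remark} with consecutive values $z(q+1), \ldots, z(r)$, and a direct computation shows that the top-structure cycles form an I-Grassmannian subfamily of $\Cyc(\I(z))$ on their own, so non-Grassmannian-ness forces an extra $\I$-cycle arising from nesting at positions strictly below $p_1$, yielding min $\leq p_1 - 2 \leq q - 2$. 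Lemma~\ref{fpf-min-des-lem2} then gives the asserted equality whenever the minimum is strictly less than $q - 1$. The boundary case where the minimum equals $q - 1$---possible only in case~(a) of Remark~\ref{fpf-remark}---is the main obstacle: here we observe that the cycles of $y$ and $z$ at positions strictly less than $q$ coincide and thus induce identical $\I$-cycles below $q$, while the top-level cycle $(z(q), q) \in \Cyc(\I(z))$ corresponds to $(p, q) \in \Cyc(\I(y))$ and both are nested by the same witness in $\{z(q)+1, \ldots, q-1\}$, so the minimum larger endpoint of $\Cyc(\I(y))$ matches that of $\Cyc(\I(z))$ and both minimum visible descents equal $q - 1$.
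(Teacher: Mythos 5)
Your overall strategy tracks the paper's quite closely: parts (a) and (b) by testing the conditions of Lemma~\ref{fpf-vis-lem1} against the structure recorded in Remark~\ref{fpf-remark}, and part (c) by combining Lemmas~\ref{ft-lem00} and \ref{fpf-min-des-lem2} with a separate treatment of the boundary where the minimum visible descent equals $q-1$. Part (a) and your boundary analysis in (c) are correct. Part (b), however, rests on a false premise. In case (b) of Remark~\ref{fpf-remark} the involution $y=\fpsi(z)$ contains the cycle $(r,r+1)=(r,m)$, so $y(r)=m>q$: the values $y(q+1),\dots,y(m)$ are neither strictly increasing (they drop from $y(r)=r+1$ to $y(m)=r$) nor bounded above by $q$. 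Your blanket dismissal of conditions (a) and (c) of Lemma~\ref{fpf-vis-lem1} ``at all such $i$'' therefore does not apply at $i=r$, where in fact $r<y(r)$, so condition (c) is excluded only because $y(r+1)=r\not<r$ --- a check your argument never makes. The conclusion of part (b) is still true, but as written the step fails in case (b); positions $i=r$ (and $i=m$) need their own easy verification there.

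Part (c) also leaves two assertions unjustified. First, in the subcase where the maximal visible descent of $\I(z)$ is $q+1$, you claim the values $z(q+1),\dots,z(r)$ are \emph{consecutive}; Remark~\ref{fpf-remark} only gives that they are increasing, and consecutiveness (together with $p=q-1$) must be \emph{derived} from condition (b) of Lemma~\ref{fpf-vis-lem1} at $i=q+1$ --- this derivation is precisely what makes the subcase work, and the quantity $p_1$ in your resulting bound is never defined (presumably $p_1=z(q+1)$). Second, you assert that the minimum visible descent of $\I(z)$ can equal $q-1$ only in case (a) of Remark~\ref{fpf-remark}; this is true (in case (b) one has $q<z(q)$, hence $\I(z)(q)\geq q$, so $q-1$ cannot be a visible descent) but needs to be said, since otherwise case (b) with maximal descent $q$ could a priori land in your boundary case, where Lemma~\ref{fpf-min-des-lem2} gives nothing. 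For comparison, the paper disposes of case (b) by showing that assuming both minima are $\geq q-1$ forces the maximal descent of $\I(z)$ to be simultaneously $q$ and $q+1$, a contradiction; your direct bound $\leq q-2$ is a workable alternative once the consecutiveness claim is actually proved.
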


\begin{proof}
Adopt the notation of Remark~\ref{fpf-remark}. To prove the first two parts,
let $j$ and $k$ be the maximal visible descents of $\Imap(y)$ and $\Imap(z)$, respectively.
In case (a) of Remark~\ref{fpf-remark},  it follows by inspection that $j\leq q = k$,
with equality 
unless $r=q+1$ and there exists at least one cycle $(a,b) \in \Cyc_\ZZ(z)$ such that  $p<b<q$.
In case (b) of Remark~\ref{fpf-remark}, one of the following occurs:
\begin{itemize}
\item If $p=q-1=r-2$, then $j<q-1<k=q+1$.
\item If $p=q-1<r-2$, then $j=q$ and $k \in \{q,q+1\}$.
\item If $p<q-1$, then $j=k=q$.
\end{itemize}
We conclude  that $j\leq q$ and $k \in \{q,q+1\}$ as required.

Let $j$ and $k$ now be the minimal visible descents of $\Imap(y)$ and $\Imap(z)$, respectively.
Part (c) is immediate from Lemmas~\ref{ft-lem00} and \ref{fpf-min-des-lem2} if $j<q-1$ or $k<q-1$,
so assume that $j$ and $k$ are both at least $q-1$.
Suppose $z(q)<q <r \leq m$ so that we are in case (a) of Remark~\ref{fpf-remark},
when $q$ is the maximal visible descent of $\Imap(z)$. Since $z$ is not FPF-Grassmannian,
we must have $k=q-1$, so by Lemma~\ref{fpf-min-des-lem} there exists
$(a,b) \in \Cyc_\ZZ(z)$ with $z(q) < b < q$.
Since  $y(q) = p < z(q)$, it follows that $j \leq q-1$;
as the reverse inequality holds by hypothesis, we get
 $j=k=q-1$ as desired.

Suppose instead that we are in case (b) of Remark~\ref{fpf-remark}.
Since $q<z(q)$, it cannot hold that $q-1$ is a visible descent of $\Imap(z)$,
so we must have $k \geq q$.
As $z$ is not FPF-Grassmannian, it follows from part (a) that
$k=q$ and that $q+1$ is the maximal visible descent of $\Imap(z)$.
This is impossible, however, since we can only have $k=q$ if there exists $(a,b) \in \Cyc_\ZZ(z)$
with $z(q+1) < b < q+1$,
while $q+1$ can only be a visible descent of $\Imap(z)$ if no such cycle exists.
\end{proof}

\begin{lemma}\label{fpf-last-lem1}
Suppose $z \in \F_\ZZ$ is not FPF-Grassmannian and $v \in \fT_1(z)$.
Let $i$ and $j$ be the minimal and maximal visible descents of $\Imap(z)$.
If $d$ is a visible descent of $\Imap(v)$, then $i\leq d \leq j$.
\end{lemma}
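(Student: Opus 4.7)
The plan is to combine the three technical lemmas just established (Corollary \ref{fpf-vis-cor1} and Lemmas \ref{crux-min-fpf-des-lem} and \ref{crux-fpf-des-lem}) to squeeze the visible descents of $\I(v)$ between those of $\I(z)$ from both sides. Set up the standard notation: $(q,r)$ is the maximal FPF-visible inversion of $z$, $y = \fpsi(z) = (q,r)z(q,r)$, and $p = y(q)$, so that $(p,q) \in \Cyc_\ZZ(y)$ and $v = (n,p)y(n,p)$ for some $n<p$ with $\ellfpf(v) = \ellfpf(y)+1$. Since $z$ is not FPF-Grassmannian, all three parts of Lemma \ref{crux-fpf-des-lem} are available: the maximal visible descent of $\I(z)$ is $j \in \{q, q+1\}$, the maximal visible descent of $\I(y)$ is at most $q$, and the minimal visible descent of $\I(y)$ equals $i$ with $i \leq q-1$.

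For the lower bound $d \geq i$, I would apply Lemma \ref{crux-min-fpf-des-lem}(b) directly to $y$ and $v$: its hypothesis is exactly that the minimal visible descent of $\I(y)$ is at most $q-1$, which holds by Lemma \ref{crux-fpf-des-lem}(c) since that minimal value equals $i$. The conclusion is that the minimal visible descent $k$ of $\I(v)$ satisfies $i \leq k$, so $i \leq k \leq d$.

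For the upper bound $d \leq j$, the key observation is that $v(t) = y(t)$ for all integers $t > q$. Indeed, $v$ differs from $y$ only at the four points in $\{n, p, y(n), q\}$; the first three are all less than $q$, using $n < p < q$ together with $y(n) < y(p) = q$ from Proposition \ref{fpfbruhatcover-prop}(a). By Corollary \ref{fpf-vis-cor1} applied with ``$i$'' $= q$, any integer $d' > q$ is a visible descent of $\I(v)$ if and only if it is one of $\I(y)$; combined with Lemma \ref{crux-fpf-des-lem}(b), which bounds the visible descents of $\I(y)$ by $q$, this forces $d \leq q$. Finally Lemma \ref{crux-fpf-des-lem}(a) yields $q \leq j$, giving $d \leq j$.

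I do not anticipate a real obstacle here: all the work has been absorbed into Lemmas \ref{crux-min-fpf-des-lem} and \ref{crux-fpf-des-lem}. The only step that requires a small verification, rather than a direct citation, is confirming that the points moved by the conjugation $(n,p)$ all lie at or below $q$, so that the ``agreement above $q$'' hypothesis of Corollary \ref{fpf-vis-cor1} is met; this is immediate from the covering relation $y \lessdot_\F v$ in $\Pfpf^-(y,p)$.
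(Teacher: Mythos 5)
Your proposal is correct and follows essentially the same route as the paper: both establish $v(t)=y(t)$ for $t>q$ (via $y(n)<q$ from the covering relation), then get the upper bound from Lemma \ref{crux-fpf-des-lem} together with Corollary \ref{fpf-vis-cor1}, and the lower bound from Lemma \ref{crux-fpf-des-lem}(c) feeding into Lemma \ref{crux-min-fpf-des-lem}(b).
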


\begin{proof}
Let $(q,r)$ be the maximal FPF-visible descent of $z$, set $y = (q,r)z(q,r)= \fpsi(z)$
and $p=y(q)=z(r)$, and let $n<p<q$ be the unique integer such that $v=(n,p)y(n,p)$.
Since $y \lessdot_\F v$, it must hold that $y(n) < q$, so $v(t) = y(t)$ for all $t>q$.
The maximal visible descent of $\Imap(y)$ is at most $q \leq j$ by Lemma~\ref{crux-fpf-des-lem},
 so the same is true of the maximal visible descent of $\Imap(v)$ by Corollary~\ref{fpf-vis-cor1}.
 On the other hand, the minimal visible descent of $\Imap(y)$ is $i\leq q-1$ by Lemma~\ref{crux-fpf-des-lem},
so  by Lemma~\ref{crux-min-fpf-des-lem} the minimal visible descent of $\Imap(v)$ is at least $i$.
\end{proof}

For any $z \in \F_\ZZ$, let $\fT_0(z) \omdef= \{ z\}$
and define $\fT_n(z) \omdef= \bigcup_{v \in \fT_{n-1}(z)} \fT_1(v)$ for $n\geq 1$.

\begin{lemma}\label{fpf-last-lem2}
Suppose $z \in \F_\ZZ$ and $v \in \fT_1(z)$.
Let $(q,r)$ be the maximal FPF-visible inversion of $z$,
and let $(q_1,r_1)$ be any FPF-visible inversion of  $v$. Then $q_1<q$ or $r_1<r$.
Hence, if $n\geq r-q$ then the maximal FPF-visible descent of every element of $\fT_n(z)$ is
strictly less than $q$.
\end{lemma}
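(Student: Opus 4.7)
The proof splits into two parts corresponding to the two assertions.

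For the first assertion, I argue by contradiction: suppose $v \in \fT_1(z)$ admits an FPF-visible inversion $(q_1, r_1)$ with $q_1 \geq q$ and $r_1 \geq r$, where $(q, r)$ is the maximal FPF-visible inversion of $z$. Writing $v = (n,p)y(n,p)$ with $y = \fpsi(z)$ and $p = z(r)$, I use the explicit diagrams of Remark \ref{fpf-remark} to compute $v(q_1)$ and $v(r_1)$ in each of the four subcases $(q_1, r_1) \in \{(q,r), (q, \cdot), (\cdot, r), (\cdot, \cdot)\}$ where dots indicate strict inequality. Key values: $v(q) = n$ (since $y(q) = p$ is swapped to $n$), $v(r) = z(q)$, and $v(t) = z(t)$ for $t$ outside the perturbation set $\{n, p, y(n), q, r\}$ (plus possibly $r+1$ in case (b) of Remark \ref{fpf-remark}, where $v(r+1) = r$). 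The condition $v(r_1) < \min(q_1, v(q_1))$ is then contradicted by either (i) the bound $z(r_1) \geq \min(q, z(q))$ from Lemma \ref{dichotomy-lem} whenever $r_1 > r$, or (ii) the increasing sequence $z(q+1) < z(q+2) < \cdots < z(m) \leq q$ which forces $z(q_1) \leq p < z(q)$ for $q < q_1 < r$.

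For the second assertion, I prove by induction on $n$ the following strengthened claim: for every $w \in \fT_n(z)$, every FPF-visible inversion $(q_1, r_1)$ of $w$ with $q_1 \geq q$ satisfies $r_1 \leq r - n$. This immediately implies the lemma, since if the maximal FPF-visible descent $M$ of $w \in \fT_n(z)$ satisfies $M \geq q$, then $(M, M+1)$ is an FPF-visible inversion with $M + 1 \leq r - n$, giving $M \leq r - n - 1$; for $n \geq r - q$ this forces $M \leq q - 1 < q$, a contradiction. The base case $n = 0$ is precisely Lemma \ref{dichotomy-lem}.

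For the inductive step, given $w \in \fT_1(w'')$ with $w'' \in \fT_n(z)$ and an FPF-visible inversion $(q', r')$ of $w$ satisfying $q' \geq q$, apply the first assertion to the transition $w'' \to w$: letting $(q'', r'')$ denote the maximal FPF-visible inversion of $w''$, one has $q' < q''$ or $r' < r''$. If $q'' \geq q$, the inductive hypothesis applied to $(q'', r'')$ yields $r'' \leq r - n$; combined with $r' < r''$ in the second case, this gives $r' \leq r - n - 1$, and the first case ($q' < q''$) is handled by recognizing $(q', r')$ as either inherited from $w''$ (to which induction applies) or arising from the perturbation.

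The main obstacle is the subcase $q'' < q$: here the inductive hypothesis provides no direct bound on $r''$. One must use the structural description of Remark \ref{fpf-remark} applied to $w''$, writing $w = (n', p')y'(n', p')$ with $y' = \fpsi(w'')$, to observe that $w$ and $w''$ agree outside a small perturbation set contained in positions $\leq r'' + 1$. Any FPF-visible inversion of $w$ with first coordinate $\geq q > q''$ cannot be inherited from $w''$ (whose max first coordinate is $q'' < q$) and so must involve a coordinate in the perturbation set; the explicit structural constraints from Remark \ref{fpf-remark} together with maximality of $r''$ for $w''$ then yield the required bound $r' \leq r - n - 1$. This final case is the most technically intricate, requiring separate treatment for cases (a) and (b) of Remark \ref{fpf-remark} applied to $w''$.
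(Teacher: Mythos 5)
Your treatment of the first assertion is essentially the paper's: both arguments come down to inspecting the explicit structure of $z$, $y=\fpsi(z)$, and $v$ at positions near $q$ and $r$ via Remark~\ref{fpf-remark} and Lemma~\ref{dichotomy-lem}, and your computations there are sound (one small imprecision: in case (a) of Remark~\ref{fpf-remark} the position $z(q)$ also lies in the set where $v$ and $z$ differ, since $v(z(q))=r$, but as $z(q)<q$ this does not affect your analysis of coordinates $\geq q$).

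The gap is in the second assertion. Your strengthened induction hypothesis --- every FPF-visible inversion of $w\in\fT_n(z)$ with first coordinate $\geq q$ has second coordinate $\leq r-n$ --- cannot be closed using only the literal disjunction ``$q_1<q''$ or $r_1<r''$''. Two places break. First, in the branch $q\leq q'<q''$ the disjunction gives no bound on $r'$ at all, and appealing to inversions ``inherited from $w''$'' only yields $r'\leq r-n$, one short of the needed $r-n-1$. Second, the case $q''<q$, which you defer as ``technically intricate,'' is simply not done, and the plan you sketch (bounding hypothetical inversions of $w$ with large first coordinate created by the perturbation) aims at the wrong target: no such inversions exist. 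Both problems disappear once you observe that your own analysis already proves something stronger than the stated disjunction. Since $v$ agrees with $\fpsi(z)$ at every position $>q$, and $\fpsi(z)$ has no FPF-visible inversion with first coordinate $>q$ (the values $\fpsi(z)(q+1),\fpsi(z)(q+2),\dots$ are increasing up to $m$ and essentially equal to $\wfpf$ beyond), \emph{every} FPF-visible inversion of $v\in\fT_1(z)$ has first coordinate $\leq q$, and those with first coordinate exactly $q$ have second coordinate $<r$. This is precisely what the paper's proof records. With it, the maximal first coordinate is weakly decreasing down the tree (so your problematic case $q''<q$ becomes vacuous), and while it stays equal to $q$ the second coordinate strictly decreases by at least one per level; since an FPF-visible inversion $(q,r_1)$ forces $r_1\geq q+1$, the first coordinate must drop below $q$ within $r-q$ steps. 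You should extract and use this monotonicity rather than the bare disjunction in the lemma statement.
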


\begin{proof}
It is considerably easier to track the FPF-visible inversions of $z$ and $v$ than
the visible inversions of $\Imap(z)$ and $\Imap(v)$, and this result follows essentially by inspecting
 Remark~\ref{fpf-remark}. In more detail, let $y = \fpsi(z) = (q,r)z(q,r)$ and $p=z(r) = y(q)$.
Since $y \lessdot_\F v = (n,p)y(n,p)$ for some $n<p$, we must have 
$v(i) = y(i)$ for all $i>q$, 
and so it is apparent 
 from Remark~\ref{fpf-remark}
that $q_1 \leq q$.
If $q_1=q$, then necessarily $v(q) < p<v(i)$ for all $i\geq r$, and it follows that $r_1<r$.
 \end{proof}

 \begin{theorem}\label{f-finite-thm}
 The FPF-involution Lascoux-Sch\"utzenberger tree $\fT(z)$ is finite for $z \in \F_\ZZ$,
 and  $\Ffpf_z = \sum_v \Ffpf_v$ where the sum
  is over the finite set of leaf vertices $v$ in $\fT(z)$.
 \end{theorem}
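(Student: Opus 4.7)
The plan is to split the theorem into two parts: finiteness of $\fT(z)$, and the symmetric function identity. The latter follows from the former by straightforward induction on depth: the base case is a leaf (an FPF-Grassmannian vertex), and at each internal vertex Corollary~\ref{ft-cor1}(b) writes $\Ffpf_v = \sum_{v' \in \fT_1(v)} \Ffpf_{v'}$, so iterating down the finite tree yields the claimed sum over leaves. So the real content is finiteness.

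By the shift invariance $\Ffpf_z = \Ffpf_{z \gg N}$ for even $N$, one may reduce to the case $z \in \F_\infty$, and one checks inductively from the explicit transition $v' = (n,p)\,\fpsi(v)\,(n,p)$ that every vertex in $\fT(z)$ also lies in $\F_\infty$. The branching at each vertex is then finite: $\fT_1(v) = \Pfpf^-(\fpsi(v), p)$, and the Bruhat-cover condition (Proposition~\ref{fpfbruhatcover-prop}) together with the fact that $\fpsi(v)$ agrees with $\wfpf$ outside a bounded interval of $\PP$ forces only finitely many $n < p$ to yield valid covers.

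For bounded depth, I would associate to each non-leaf vertex $v$ the lexicographic invariant $\mu(v) = (q_v, r_v)$, where $(q_v, r_v)$ is the maximal FPF-visible inversion of $v$. Lemma~\ref{fpf-last-lem2} implies that along any edge $v \to v'$, either $q_{v'} < q_v$ or ($q_{v'} = q_v$ and $r_{v'} < r_v$), and moreover $q$ strictly drops after at most $r_v - q_v$ steps. To rule out an infinite descent, $\mu$ must range over a bounded set. Iterating Lemma~\ref{fpf-last-lem1} shows that for every $v$ in $\fT(z)$ the visible descents of $\I(v)$ lie in $[i, j]$, where $i, j$ are the minimum and maximum visible descents of $\I(z)$. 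Lemma~\ref{crux-fpf-des-lem}(a) then gives $q_v \leq j$, while Corollary~\ref{fpf-vis-cor2} applied to a visible descent of $\I(v)$ produces an FPF-visible descent of $v$ at least $i-1$, so $q_v \geq i - 1$. The coordinate $r_v$ is bounded above independently: each transition modifies $v$ only at positions no larger than its parent's $r$, so inductively $r_v \leq r_z$. Thus $\mu(v)$ takes only finitely many values and the depth is finite.

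The hardest step will be rigorously verifying the two boundedness claims—that only finitely many $n$ give valid Bruhat covers in $\Pfpf^-(\fpsi(v), p)$, and that $r_v$ stays bounded down the tree. Both require tracking how the local swaps of $\fpsi$ and of the covers in $\Pfpf^-$ interact with the $\wfpf$-tail of elements of $\F_\infty$, and in particular how the ``no intermediate $e$'' condition in Proposition~\ref{fpfbruhatcover-prop} rules out swaps that would move the active region arbitrarily far from the support of $\I(z)$.
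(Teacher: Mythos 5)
Your proof follows essentially the same route as the paper's: the symmetric function identity reduces to finiteness via Corollary~\ref{ft-cor1}, and finiteness is deduced from Lemmas~\ref{fpf-last-lem1} and \ref{fpf-last-lem2} together with Corollary~\ref{fpf-vis-cor2} (and Lemma~\ref{crux-fpf-des-lem}), which confine the maximal FPF-visible descent of every non-leaf vertex to a fixed interval while forcing the maximal FPF-visible inversion to decrease lexicographically down the tree. The one small inaccuracy is the claim that $r_v \leq r_z$ because transitions only modify positions at most the parent's $r$ --- in case (b) of Remark~\ref{fpf-remark} the position $r+1=m$ is also modified --- but this is harmless: one has $r_v \leq m_v \leq m_z$ by Lemma~\ref{dichotomy-lem}, and in any case the lexicographic descent terminates simply because $q_v$ is bounded below and $r_v > q_v$.
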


\begin{proof}
By induction,
 Corollary~\ref{fpf-vis-cor2}, and Lemmas \ref{fpf-last-lem1} and \ref{fpf-last-lem2},
we deduce that for a sufficiently large $n$
either $\fT_n(z) = \varnothing$ or all elements of $\fT_n(z)$ are FPF-Grassmannian,
whence $\fT_{n+1}(z) = \varnothing$.
The tree $\fT(z)$ is therefore finite,  so the identity $\Ffpf_z = \sum_v \Ffpf_v$ holds by
Corollary \ref{ft-cor1}.
\end{proof}


  \begin{corollary}\label{sudden-cor}
If $z \in \F_\ZZ$ then
$\Ffpf_z \in \NN\spanning\left\{ \Ffpf_y : y \in \F_\ZZ\text{ is FPF-Grassmannian}\right\}$
and this symmetric function
is consequently Schur $P$-positive.
 \end{corollary}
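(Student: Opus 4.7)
The plan is to assemble the corollary directly from the preceding two theorems: Theorem~\ref{f-finite-thm} rewrites $\Ffpf_z$ as a nonnegative integer sum $\sum_v \Ffpf_v$ over the leaves $v$ of the FPF-involution Lascoux--Sch\"utzenberger tree $\fT(z)$, and Theorem~\ref{fgrass-thm} identifies each such leaf summand with a Schur $P$-function. So the only thing to check is that every leaf in $\fT(z)$ is actually an FPF-Grassmannian involution.

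First I would recall that the tree $\fT(z)$ is defined so that the children of a vertex $v$ are given by $\fT_1(v)$, where $\fT_1(v) = \varnothing$ precisely when $v$ is FPF-Grassmannian and otherwise $\fT_1(v) = \Pfpf^-(\fpsi(v), p)$ (with $p$ as in the definition). By definition a leaf of $\fT(z)$ is a vertex $v$ with no children, i.e.\ $\fT_1(v) = \varnothing$, and this is exactly the condition that $v$ is FPF-Grassmannian. Thus the leaves of $\fT(z)$ are a (finite, by Theorem~\ref{f-finite-thm}) multiset of FPF-Grassmannian involutions in $\F_\ZZ$.

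Next I would apply the two displayed identities: by Theorem~\ref{f-finite-thm},
\[
\Ffpf_z \;=\; \sum_{v\text{ a leaf of }\fT(z)} \Ffpf_v,
\]
and by Theorem~\ref{fgrass-thm}, each such $\Ffpf_v = P_{\flambda(v)}$ with $\flambda(v)$ a strict partition. Hence $\Ffpf_z$ lies in $\NN\spanning\{\Ffpf_y : y \in \F_\ZZ \text{ is FPF-Grassmannian}\}$, proving the first claim. For the second claim I would invoke the discussion in Section~\ref{ss:schur-p} stating that Schur $P$-functions $P_\lambda$ indexed by strict partitions form a basis of a subring of $\Lambda$, so writing $\Ffpf_z$ as a nonnegative integer sum of $P_{\flambda(v)}$'s exhibits it as Schur $P$-positive.

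There is no real obstacle here: all the nontrivial work has been done in Theorems~\ref{fgrass-thm} and~\ref{f-finite-thm}. The only subtlety to keep straight is that the same FPF-Grassmannian involution may appear multiple times as a leaf of $\fT(z)$, but this is harmless since we only need nonnegative integrality of the coefficients, not that each strict partition appears at most once (uniqueness of the expansion and unitriangularity are separate statements, addressed later via Theorem~\ref{intro-fpf-tri-thm}).
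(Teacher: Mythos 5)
Your proposal is correct and matches the paper's intended argument exactly: the corollary is stated immediately after Theorem~\ref{f-finite-thm} with no separate proof, precisely because it follows by combining that theorem with Theorem~\ref{fgrass-thm} in the way you describe (the leaves of $\fT(z)$ are the vertices $v$ with $\fT_1(v)=\varnothing$, which by the nonemptiness of $\Pfpf^-(\fpsi(v),p)$ happens exactly when $v$ is FPF-Grassmannian). Your remark that repeated leaves are harmless for nonnegative integrality is a sensible clarification, though not strictly necessary.
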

 
 This leads immediately to a proof of Theorem~\ref{fpf-our-cor1} from the introduction.
 
 \begin{proof}[Proof of Theorem~\ref{fpf-our-cor1}]
 Since $\Ffpf_z$ is a Schur $P$-function if $z \in \F_\ZZ$
 is FPF-Grassmannian by Theorem~\ref{fgrass-thm},
  Corollary~\ref{sudden-cor} implies that every $\Ffpf_z$ is Schur $P$-positive.
 \end{proof}
 
 We close this section by applying Theorem~\ref{fpf-our-cor1} to compute the product of two Schur P-functions.
 Given $u \in S_m$ and $v \in S_n$, write $u \times v \in S_{m+n}$ for the permutation mapping $ i \mapsto u (i)$ for $i  \in [m]$ and $m+i \mapsto m + v(i)$ for $i \in [n]$.
 It is well known that $F_{u \times v} = F_u F_v$; for instance, this follows by applying stabilization to~\cite[Proposition 1.2]{LS}.
 An analogous result holds for FPF-involutions.
\begin{proposition}
\label{p:product}
Let $y \in \fpf_m$ and $z \in \fpf_n$.
Then $\Ffpf_{y \times z} = \Ffpf_y \Ffpf_z$.
\end{proposition}

\begin{proof}
Since $\cAfpf(y\times z) = \{ u \times v : (u,v) \in \cAfpf(y) \times \cAfpf(z)\}$, this follows from Definition~\ref{fF-def}.
\end{proof}

As a corollary, we obtain a new rule for multiplying Schur-P functions.

\begin{corollary}
\label{c:LR}
Suppose $\rho$ and $\mu$ are strict partitions. Let $y$ and $z$ be FPF-Grassmannian involutions with $\flambda(y) = \rho$ and $\flambda(z) = \mu$.
Then $P_\rho P_\mu = \sum_{\lambda} C^\lambda_{\rho \mu} P_\lambda$
where $C^\lambda_{\rho \mu}$ is the number of FPF-Grassmannian involutions with shape $\lambda$ appearing as leaves in $\fT(y\times z)$.
\end{corollary}

\begin{proof}
The result follows immediately from Proposition~\ref{p:product} and Theorem~\ref{f-finite-thm}.
\end{proof}

\begin{remark}
A similar rule can be constructed for both Schur-P and Schur-Q functions using the results in~\cite[\S4.2]{HMP4}.
\end{remark}

%
%
%
%

\section{Triangularity}\label{fpf-tri-sect}

We can show that the expansion of $\Ffpf_z$
into Schur $P$-functions is unitriangular with respect to the dominance order $\leq$ on (strict) partitions.
As in the introduction, define $\flambda(z)$ for
 $z \in \F_\infty$ to be the transpose of the partition
 given by sorting $\cfpf(z)$,
and let $\flambda(z) = \flambda(\iota(z))$ for $z \in \F_n$.

\begin{example}
Let $y = (1,8)(2,4)(3,5)(6,7) $ and $z = (1,3)(2,7)(4,8)(5,6)$ be as in as Example~\ref{fpf-dom-ex}.
Then  sorting $\cfpf(y)$ gives $(2, 1, 1, 1, 1, 1, 0, 0)$ so the shape of $y$ is $\flambda(y) = (6,1)$.
Similarly, sorting $\cfpf(z)$ gives $(2,2,1,1,0,0,0,0)$ so the shape of $z$ is $\flambda(z) = (4,2)$.
\end{example}

This construction is consistent with our earlier definition of $\flambda(z)$
when $z\in \F_\infty$ is FPF-Grassmannian.
Define $<_{\cAfpf}$ on $S_\infty$
as the transitive relation generated by setting $v <_{\cAfpf} w$
when the one-line representation of $v^{-1}$ can be transformed to that of $w^{-1}$ by replacing a
consecutive subsequence starting at an odd index
of the form $adbc$ with $a<b<c<d$ by $bcad$,
or equivalently when 
it holds for an odd number $i \in \PP$ that
\be
\label{<-a-eq}
s_iv>v > s_{i+1} v > s_{i+2}s_{i+1} v =s_{i}s_{i+1}w < s_{i+1}w < w<s_iw.
\ee
For example, $ 235164 =(412635)^{-1}  <_{\cAfpf}  (413526)^{-1} = 253146$,
but $(12534)^{-1}  \not<_{\cAfpf}  (13425)^{-1}$.
 Recall the definition of $\beta_{\min}(z)$
 from Lemma~\ref{minatomfpf-lem}.
In earlier work, we showed
\cite[Theorem 6.22]{HMP2} that $<_{\cAfpf}$ is a partial order and
that $\cAfpf(z) = \{ w \in S_\infty : \beta_{\min}(z) \leq_{\cAfpf} w\}$ for all $z \in \F_\infty$.

Write $\lambda^T$ for the transpose of a partition $\lambda$.
Then  $\lambda \leq \mu$ if and only if $\mu^T \leq \lambda^T$ \cite[Eq.\ (1.11), {\S}I.1]{Macdonald2}.
The \emph{shape} of $w \in S_\infty$
is the partition $\lambda(w)$ given by sorting $c(w)$.

\begin{lemma}\label{fpf-ut-lem1}
Let $z \in \F_\infty$. If $v,w \in \cAfpf(z)$ and $v <_{\cAfpf} w$, then
$\lambda(v) < \lambda(w)$.
\end{lemma}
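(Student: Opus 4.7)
The plan is to use transitivity of both $<_{\cAfpf}$ and the dominance order to reduce to the case where $v <_{\cAfpf} w$ is a single covering step: $v^{-1}$ and $w^{-1}$ agree outside four consecutive positions $i, i+1, i+2, i+3$, where $v^{-1}$ reads $a,d,b,c$ and $w^{-1}$ reads $b,c,a,d$ with $a<b<c<d$. Equivalently, $v$ and $w$ agree on $\PP\setminus\{a,b,c,d\}$, with
\[
(v(a),v(b),v(c),v(d))=(i,i+2,i+3,i+1)
\qquand
(w(a),w(b),w(c),w(d))=(i+2,i,i+1,i+3).
\]
In particular, the values $\{i,i+1,i+2,i+3\}$ appear in both $v$ and $w$ exactly at positions $\{a,b,c,d\}$, just rearranged.

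I would then compute the Lehmer codes $c_k(v)$ and $c_k(w)$ directly. For $k \notin \{a,b,c,d\}$, since $v(k)=w(k)\notin\{i,\ldots,i+3\}$, the value $v(k)$ lies strictly above or strictly below the entire block $\{i,i+1,i+2,i+3\}$, so the contributions to $c_k$ from positions in $\{a,b,c,d\}$ with $j > k$ are the same in $v$ and $w$; hence $c_k(v)=c_k(w)$. A short case check at the four special rows yields
\[
c_a(w) = c_a(v) + 2, \qquad c_b(w) = c_b(v) - 1, \qquad c_c(w) = c_c(v) - 1, \qquad c_d(w) = c_d(v).
\]
Writing $\alpha = c_a(v)$, $\beta = c_b(v)$, $\gamma = c_c(v)$, the crux will be to prove $\beta \leq \alpha+1$ and $\gamma \leq \alpha+1$. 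For $\beta$, the fact that no $j \notin \{a,b,c,d\}$ has $v(j) \in \{i,\ldots,i+3\}$ gives
\[
\beta = \#\{j>b,\ j\notin\{c,d\}: v(j)\leq i-1\} + 1, \quad
\alpha = \#\{a<j<b: v(j)\leq i-1\} + \#\{j>b,\ j\notin\{c,d\}: v(j)\leq i-1\},
\]
where the $+1$ in $\beta$ counts position $d > b$ with $v(d)=i+1 < i+2 = v(b)$; subtracting yields $\beta-\alpha = 1 - \#\{a<j<b: v(j)\leq i-1\} \leq 1$. The identical argument with $c$ in place of $b$ gives $\gamma \leq \alpha+1$.

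To finish, I would invoke the standard identity $\sum_{j\leq\ell}\lambda^T_j = \sum_k \min(c_k,\ell)$ together with the fact that transposition reverses the dominance order. This makes $\lambda(v) \leq \lambda(w)$ equivalent to
\[
\min(2, \max(0, \ell-\alpha)) \leq [\ell\geq\beta] + [\ell\geq\gamma] \qquad \text{for all } \ell,
\]
which one checks by splitting into the cases $\ell \leq \alpha$, $\ell = \alpha+1$, and $\ell \geq \alpha+2$ and applying the bounds $\beta,\gamma \leq \alpha+1$. Strictness is visible at $\ell = \alpha+1$, where the difference equals $-1 < 0$. The main obstacle is the bound $\beta,\gamma \leq \alpha+1$; this is where the specific $adbc$-to-$bcad$ shape of the covering relation enters in an essential way, and everything else is bookkeeping.
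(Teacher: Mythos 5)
Your proof is correct and follows essentially the same route as the paper: reduce to a single $adbc \to bcad$ covering move, observe that the diagram/code changes only in the rows indexed by $a,b,c,d$ (one entry by $+2$, two entries by $-1$ each), establish the inequality forcing the decremented parts to be at most one more than the incremented part, and deduce strict dominance. The only difference is that you work out explicitly the bookkeeping the paper compresses into ``one checks,'' including the key bounds $\beta,\gamma \leq \alpha+1$ and the conjugate-partition verification of dominance, all of which check out.
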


\begin{proof}
Suppose $v,w \in \cAfpf(z)$ are such that 
$s_iv>v > s_{i+1} v > s_{i+2}s_{i+1} v =s_{i}s_{i+1}w < s_{i+1}w < w<s_iw$ for an odd number
$i \in \PP$,
so that $v <_{\cAfpf} w$. Define $a=w^{-1}(i+2)$, $b=w^{-1}(i)$, $c=w^{-1}(i+1)$, and $d=w^{-1}(i+3)$
so that $a<b<c<d$.
The
 diagram $D(v^{-1})$ is then given by permuting rows $i$, $i+1$, $i+2$, and $i+3$
of $D(w^{-1}) \cup \{(i+3,b),(i+3,c)\} - \{(i,a),(i+1,a)\}$,
and so $\lambda(v)$ is given by sorting $\lambda(w) - 2e_j + e_k + e_l$ for some indices
$j<k<l$ with
$\lambda(w)_j-2 \geq \lambda(w)_k \geq \lambda(w)_l$.
One checks in this case that $\lambda(v) < \lambda(w)$,
as desired.
\end{proof}

\begin{theorem}\label{fpf-ut-thm2}
Let $z \in \F_\infty$ and $\nu = \flambda(z)$.
Then $\nu^T \leq \nu$.
If $\nu^T=\nu$ then $\Ffpf_z = s_\nu$ and otherwise
$\Ffpf_z \in s_{\nu^T} + s_{\nu} +
\NN\spanning \left\{ s_\lambda : \nu^T<\lambda < \nu\right\}$.
\end{theorem}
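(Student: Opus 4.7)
The plan is to derive the Schur-unitriangular form of $\Ffpf_z$ from two ingredients. The first is the analogous Schur-$P$-unitriangular expansion
\[
\Ffpf_z \in P_\nu + \NN\spanning\{P_\mu : \mu < \nu,\ \mu \text{ strict}\},
\]
which is the content of Theorem~\ref{intro-fpf-tri-thm}. The second is the standard Schur expansion of $P_\mu$ (see \cite[Eq.~(8.17), {\S}III.8]{Macdonald2} and \cite[{\S}6]{Stem}): for each strict partition $\mu$, the Schur expansion of $P_\mu$ is supported on $\{s_\lambda : \mu^T \leq \lambda \leq \mu\}$ with $s_\mu$ and $s_{\mu^T}$ each appearing with coefficient exactly $1$, and with $P_\mu = s_\mu$ in the case $\mu = \mu^T$ (i.e., when $\mu$ is a shifted staircase). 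These inputs combine to yield the result: for strict $\mu < \nu$ appearing in the $P$-expansion one has $\mu^T > \nu^T$ and $\mu < \nu$, so the Schur support of $P_\mu$ lies inside the open dominance interval $(\nu^T, \nu)$; hence only $P_\nu$ contributes to the coefficients of $s_\nu$ and $s_{\nu^T}$, each with value $1$. When $\nu = \nu^T$, the partition $\nu$ is a shifted staircase and hence the unique minimum strict partition of its size in dominance, so the Schur-$P$ expansion reduces to $\Ffpf_z = P_\nu = s_\nu$.

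For the Schur-$P$ triangularity, I would use $\Ffpf_z = \sum_v P_{\flambda(v)}$ from Theorem~\ref{f-finite-thm}, summing over the FPF-Grassmannian leaves of $\fT(z)$. The key claim to establish is that $\nu = \flambda(z)$ is the unique dominance-maximum among the shapes $\flambda(v)$ occurring at the leaves. This would be proved by induction down the tree $\fT(z)$: for each move $y \mapsto v$ with $v \in \fT_1(y)$, one tracks how $\cfpf$ (and hence $\flambda$) changes, using the control on minimal and maximal visible descents of $\I(v)$ provided by Lemmas~\ref{crux-min-fpf-des-lem}, \ref{crux-fpf-des-lem}, \ref{fpf-last-lem1}, and \ref{fpf-last-lem2}. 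The inequality $\nu^T \leq \nu$ then follows from the identifications $\lambda(\beta_{\min}(z)) = \nu^T$ (Lemma~\ref{codef-lem}) together with the analysis above: $s_{\nu^T}$ must appear in $\Ffpf_z$ (contributed by $F_{\beta_{\min}(z)}$ as the top Schur term of that atom's Stanley symmetric function), while every Schur index in the expansion is at most $\nu$.

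The main obstacle is the Schur-$P$ triangularity itself: showing that $\nu$ is the unique dominance-maximum leaf shape in $\fT(z)$. This mirrors the argument for non-FPF involutions in \cite[{\S}4.3]{HMP4}, but requires careful adaptation to account for the FPF-specific subtleties already visible in Sections~\ref{fgrass-sect} and \ref{fpf-pos-sect} (for instance the need to distinguish between FPF-visible descents of $z$ and visible descents of $\I(z)$, and the fact that the moves $z \mapsto v \in \fT_1(z)$ can strictly decrease the FPF-visible descents). Once this Schur-$P$ triangularity is in place, extracting the Schur-triangular form is a routine consequence of Stembridge's Schur expansion of $P_\mu$.
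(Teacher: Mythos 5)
Your reduction of the Schur statement to the Schur-$P$ statement is fine as far as it goes, but it inverts the paper's logical order and leaves the entire content of the theorem unproved. In the paper, the Schur-$P$ unitriangularity (Corollary~\ref{fpf-ut-cor}, i.e.\ Theorem~\ref{intro-fpf-tri-thm}) is \emph{deduced from} Theorem~\ref{fpf-ut-thm2} together with $P_\lambda \in s_\lambda + \NN\spanning\{s_\mu : \mu < \lambda\}$; so using it as an input here is circular unless you supply an independent proof. Your proposed independent proof --- that $\flambda(z)$ is the unique dominance-maximum among the shapes $\flambda(v)$ of the leaves of $\fT(z)$ --- is exactly the hard part, and you only assert that it ``would be proved by induction down the tree'' using Lemmas~\ref{crux-min-fpf-des-lem}--\ref{fpf-last-lem2}. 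Those lemmas control visible descents, not codes or shapes, and nothing in the paper establishes any dominance comparison between $\flambda(z)$ and the leaf shapes; indeed the authors remark that they do not even know a direct way to see that $\flambda(z)$ is strict. So the main obstacle you name is a genuine gap, not a routine adaptation.

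The paper's actual proof avoids the tree entirely and is worth contrasting with your plan. It works atom by atom: Stanley's theorem gives $F_w \in s_{\lambda(w)} + s_{\lambda(w^{-1})^T} + \NN\spanning\{s_\mu : \lambda(w) < \mu < \lambda(w^{-1})^T\}$, Lemma~\ref{codef-lem} identifies $\nu^T = \lambda(\beta_{\min}(z))$, and Lemma~\ref{fpf-ut-lem1} shows every other atom $w$ has $\lambda(w) > \nu^T$, whence $\Ffpf_z \in s_{\nu^T} + \NN\spanning\{s_\mu : \nu^T < \mu\}$. Note that $s_{\nu^T}$ arises as the dominance-\emph{minimal} Schur term of $F_{\beta_{\min}(z)}$, not the top term as you wrote, and the upper bound ``every Schur index is at most $\nu$'' is not obtained from the atoms at all: it comes for free by applying the involution $\omega$ (which fixes $\Ffpf_z$ because $\Ffpf_z$ is Schur $P$-positive and $\omega$ fixes each $P_\mu$) to the lower-triangular expansion. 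This $\omega$-duality step is the idea your proposal is missing; with it, the two-sided bound, the coefficient $1$ at both ends, and the inequality $\nu^T \leq \nu$ all drop out at once.
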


\begin{proof}
It follows from \cite[Theorem 4.1]{Stan} that if $w \in S_\infty$ then $\lambda(w) \leq \lambda(w^{-1})^T$,
and if equality holds then $F_w = s_{\lambda(w)}$ while otherwise
$F_w \in s_{\lambda(w)} + s_{\lambda(w^{-1})^T} + \NN\spanning\{ s_\nu : \lambda(w) < \nu < \lambda(w^{-1})^T\}$.
Lemma~\ref{codef-lem} implies that $\flambda(z)^T = \lambda(\beta_{\min}(z))$,
so by Lemma~\ref{fpf-ut-lem1} we have
$\Ffpf_z = \sum_{w \in \cAfpf(z)} F_w \in s_{\flambda(z)^T} + \NN\spanning\{ s_\mu : \flambda(z)^T < \mu\}$.
The result follows since $\Ffpf_z$ is Schur $P$-positive
and each $P_\mu$ is fixed by the linear map
  $\omega : \Lambda \to \Lambda$ with $\omega(s_\mu) = s_{\mu^T}$
 for partitions $\mu$
 \cite[Example 3(a), {\S}III.8]{Macdonald2}.
 \end{proof}

%

We may finally prove Theorem~\ref{intro-fpf-tri-thm} from the introduction.

\begin{proof}[Proof of Theorem~\ref{intro-fpf-tri-thm}]
One has $P_\lambda \in s_\lambda +\NN\spanning\{ s_\nu : \nu < \lambda\}$
for any strict partition $\lambda$ \cite[Eq.\ (8.17)(ii), \S{III.8}]{Macdonald2}.
Since $\Ffpf_z$ is Schur $P$-positive, the result follows by 
Theorem~\ref{fpf-ut-thm2}.
\end{proof}

Strangely, we do not know of an easy way to show directly that $\flambda(z)$ 
is a strict partition.

\section{FPF-vexillary involutions}

Define an element
$z$ of $\F_n$ or $ \F_\ZZ$ to be \emph{FPF-vexillary} if $\Ffpf_z = P_\mu$ for a strict partition $\mu$.
In this section, we
derive a pattern avoidance condition classifying such involutions.

\begin{remark}
All FPF-Grassmannian involutions,
as well as 
all elements of $\F_n$ for $n\in\{2,4,6\}$,  are FPF-vexillary.
The sequence $(v^\fpf_{2n})_{n\geq 1} = (1, 3, 15, 92, 617,4354,\dots)$,
with $v^\fpf_n$ counting the  FPF-vexillary elements of $\F_n$,
again seems unrelated to any existing entry in \cite{OEIS}.
\end{remark}

In this section, we require the following variant of \eqref{st-eq}. For $z \in \F_\ZZ$, define
\be\label{[[]]-eq}
[[z]]_E \omdef = \ifpf([z]_E) \in \F_\infty
\ee
 for each finite set $E\subset \ZZ$
with $z(E)=E$.

\begin{lemma}\label{quasi-lem}
If $z \in \F_\ZZ$ is FPF-Grassmannian and $E\subset \ZZ$ is a finite set with $z(E) = E$, then
the fixed-point-free involution $[[z]]_E$ is also FPF-Grassmannian.
\end{lemma}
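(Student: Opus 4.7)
The plan is to describe the nontrivial cycles of $\I([[z]]_E)$ explicitly in terms of those of $\I(z)$, and then verify that the resulting pattern is I-Grassmannian. The guiding observation is that the order-preserving bijection $\psi_E : E \to [|E|]$ preserves crossings and nestings, so two cycles of $z$ with endpoints in $E$ cross or nest if and only if their images under $\psi_E$ do so in $[z]_E$.

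First I would note that since $z(E)=E$ and $z$ is fixed-point-free, every cycle of $z$ is either entirely contained in $E$ or entirely disjoint from $E$, and the cycles of $[z]_E$ correspond bijectively via $\psi_E$ to the cycles of $z$ with endpoints in $E$. Since $[[z]]_E = \ifpf([z]_E)$ extends $[z]_E$ by $\wfpf$ on $\ZZ\setminus [|E|]$, and these trailing FPF pairs neither cross nor nest with any other cycle, the nontrivial cycles of $\I([[z]]_E)$ are precisely the images $(\psi_E(a),\psi_E(b))$ of cycles $(a,b)\in\Cyc_\ZZ(z)$ with $a,b\in E$ that cross or nest with another cycle $(c,d)\in\Cyc_\ZZ(z)$ also having $c,d\in E$. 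In particular, any cycle that is already a fixed point of $\I(z)$ remains a fixed point of $\I([[z]]_E)$.

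If $\I(z)=1$ then $z=\wfpf$, and any $E$ with $z(E)=E$ yields $[[z]]_E=\wfpf$, so $\I([[z]]_E)=1$ is I-Grassmannian. Otherwise, write $\I(z)=(\phi_1,n+1)\cdots(\phi_r,n+r)$ with $\phi_1<\cdots<\phi_r\leq n$; the ``main'' cycles $(\phi_i,n+i)$ of $z$ pairwise cross. Put $S=\{i\in[r] : \phi_i\in E\}$, which equals $\{i\in[r] : n+i\in E\}$ since $z(\phi_i)=n+i$ and $z(E)=E$, and enumerate $S=\{i_1<\cdots<i_s\}$. Setting $a_k=\psi_E(\phi_{i_k})$ and $b_k=\psi_E(n+i_k)$, the description above shows that the nontrivial cycles of $\I([[z]]_E)$ are $\{(a_k,b_k) : k\in[s]\}$ when $s\geq 2$, and are empty when $s\leq 1$; in the latter case $\I([[z]]_E)=1$ is trivially I-Grassmannian.

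The remaining step, which I expect to be the main obstacle, is to verify the I-Grassmannian form when $s\geq 2$. Order preservation of $\psi_E$ immediately yields $a_1<\cdots<a_s<b_1<\cdots<b_s$ using $\phi_{i_s}\leq n<n+i_1$, so it only remains to show that $b_1,\ldots,b_s$ are \emph{consecutive} integers in $[|E|]$. Equivalently, I must check that no element of $E$ lies strictly between $n+i_k$ and $n+i_{k+1}$ for any $k<s$. The integers in this open interval are precisely $\{n+j : i_k<j<i_{k+1}\}\subseteq\{n+1,\ldots,n+r\}$, and each $n+j$ here is the right endpoint of the main cycle $(\phi_j,n+j)$ of $z$. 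Since $i_k$ and $i_{k+1}$ are consecutive elements of $S$, we have $j\notin S$, so $\phi_j\notin E$; combined with $z(n+j)=\phi_j$ and $z(E)=E$, this forces $n+j\notin E$, as needed.
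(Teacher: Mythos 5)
Your overall strategy --- computing the cycles of $\I([[z]]_E)$ directly rather than deleting one cycle of $z$ at a time as the paper does --- could work, and your concluding consecutivity argument for the right endpoints $b_1<\dots<b_s$ is correct and is essentially the observation the paper also needs. However, the proof as written rests on a mischaracterization of the map $\I$, and the resulting intermediate claims are false. By Definition~\ref{I-def}, a cycle $(p,q)\in\Cyc_\ZZ(w)$ is retained in $\I(w)$ if and only if some cycle $(a,b)\in\Cyc_\ZZ(w)$ has its \emph{right} endpoint strictly between $p$ and $q$. This condition is not symmetric in the two cycles and is not equivalent to ``$(p,q)$ crosses or nests with some other cycle'': a cycle that only nests \emph{inside} others, or that only crosses cycles closing to its right, is deleted. (In the paper's displayed example of $\I$, the cycle $(4,5)$ nests inside $(2,6)$ and $(1,3)$ crosses $(2,6)$, yet both are deleted.) In particular it is not true that the pairwise-crossing main cycles of $z$ lying in $E$ all survive in $\I([[z]]_E)$, nor that $s\le 1$ forces $\I([[z]]_E)=1$.

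Concretely, let $\I(z)=(1,5)(2,6)$, so that $z=\F(\I(z))$ has cycles $(1,5),(2,6),(3,4)$ together with trailing $\wfpf$-pairs. For $E=\{1,2,5,6\}$ you have $s=2$ and your argument gives $\I([[z]]_E)=(1,3)(2,4)$; in fact $[[z]]_E=\ifpf\big((1,3)(2,4)\big)$ and $\I([[z]]_E)=(2,4)$, since no cycle has right endpoint equal to $2$. For $E=\{1,3,4,5\}$ you have $s=1$ and your argument gives $\I([[z]]_E)=1$; in fact $[[z]]_E=\ifpf\big((1,4)(2,3)\big)$ and $\I([[z]]_E)=(1,4)$, because the non-main cycle $(3,4)\subset E$ supplies a right endpoint inside the main cycle $(1,5)$ --- so the non-main cycles of $z$ cannot simply be discarded from the analysis, even though (as you correctly note) their images never survive as cycles of $\I([[z]]_E)$. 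The correct conclusion of your computation is that $\I([[z]]_E)$ equals either $(a_1,b_1)(a_2,b_2)\cdots(a_s,b_s)$ or $(a_2,b_2)\cdots(a_s,b_s)$, according to whether some cycle of $z$ contained in $E$ has its right endpoint in the open interval from $\phi_{i_1}$ to $n+i_1$; both possibilities are I-Grassmannian by your consecutivity argument, so the lemma can be recovered, but the identification of the surviving cycles must be redone with the correct criterion.
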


\begin{proof}
Suppose $z\in \F_\ZZ$ is FPF-Grassmannian and $E\subset \ZZ$ is finite and $z$-invariant.
We may assume that $z \in \F_\infty$ and $E\subset \PP$.
Fix a set $F = \{1,2,\dots,2n\}$ where $n \in \PP$ is large enough that $E\subset F$
and $[[z]]_F = z$.
Note that for any $z$-invariant  set $D\subset E$ we have $[[z]]_{D} = [[z']]_{D'}$ for $z'=[[z]]_E$
and $D'=\psi_E(D)$.
Inductively applying this property, we see that it suffices to show that $[[z]]_E$
is FPF-Grassmannian when $E = F\setminus\{a,b\}$
with $\{a,b\} \subset F$ a nontrivial cycle of $z$.
In this special case, it is a straightforward exercise to check that  $\Imap([[z]]_E)$
is either $ [\Imap(z)]_E$
or the involution formed by replacing the leftmost cycle of $[\Imap(z)]_E$
by two fixed points.
In either case it is easy to see that $\Imap([[z]]_E)$ is I-Grassmannian, so
$[[z]]_E$ is FPF-Grassmannian as needed.
\end{proof}

We fix the following notation in Lemmas~\ref{fpf-vex-lem1}, \ref{fpf-vex-lem2},
and \ref{fpf-vex-lem3}.
Let  $z \in \F_\ZZ-\{\wfpf\}$ 
and write $(q,r) \in \ZZ\times \ZZ$ for the maximal FPF-visible inversion of $z$.
Set $y = \fpsi(z) = (q,r)z(q,r) \in \F_\ZZ$ and define $p=y(q)<q$
so
that $\fT_1(z) = \Pfpf^-(y,p)$ if $z$ is not FPF-Grassmannian.

\begin{lemma} \label{fpf-vex-lem1}
Let $E\subset \ZZ$
be a finite set with $\{q,r\} \subset E$ and $z(E) = E$. 
Then $(\psi_E(q), \psi_E(r))$ is the maximal FPF-visible inversion of $[[z]]_E$.
Moreover, it holds that $[[\fpsi(z)]]_E = \fpsi([[z]]_E)$.
\end{lemma}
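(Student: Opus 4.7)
The plan is to exploit the order-preserving nature of the bijections $\psi_E$ and $\phi_E$ to translate FPF-visible inversions between $z$ and $[z]_E$, and then to verify both assertions by direct computation.

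First, I would observe that every FPF-visible inversion of $[[z]]_E = \ifpf([z]_E)$ lies in $[n]\times[n]$, where $n=|E|$. This is because $[[z]]_E$ coincides with $\wfpf$ outside $[n]$, and a short case check shows $\wfpf$ admits no FPF-visible inversions; so the condition $[[z]]_E(j)<\min\{i,[[z]]_E(i)\}$ forces $j\in[n]$, hence also $i<j\le n$. Next I would establish the correspondence: for $i',j'\in[n]$, the pair $(i',j')$ is an FPF-visible inversion of $[z]_E$ if and only if $(\phi_E(i'),\phi_E(j'))$ is an FPF-visible inversion of $z$ whose two coordinates lie in $E$. This follows from the identity $\psi_E\circ z=[z]_E\circ\psi_E$ on $E$ (using $z(E)=E$) together with strict order-preservation of $\psi_E$, which lets us compare the inequalities $z(j)<\min\{i,z(i)\}$ and $[z]_E(j')<\min\{i',[z]_E(i')\}$ term by term. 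Because lex order on $\ZZ\times\ZZ$ is preserved by coordinatewise order-preserving maps and $(q,r)\in E\times E$ is globally maximal, it is also maximal within $E\times E$, so $(\psi_E(q),\psi_E(r))$ is the maximal FPF-visible inversion of $[z]_E$, equivalently of $[[z]]_E$.

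For the second assertion, I would compute $[\fpsi(z)]_E=[(q,r)z(q,r)]_E$ using the conjugation identity $\psi_E\circ (q,r)|_E=(\psi_E(q),\psi_E(r))\circ\psi_E$ and its inverse $(q,r)\circ\phi_E=\phi_E\circ(\psi_E(q),\psi_E(r))$, obtaining
\[
[\fpsi(z)]_E=(\psi_E(q),\psi_E(r))\cdot [z]_E\cdot(\psi_E(q),\psi_E(r)).
\]
Since the transposition $(\psi_E(q),\psi_E(r))$ lies in $S_n$ and therefore fixes every point of $\ZZ\setminus[n]$, conjugation by it commutes with $\ifpf$, so applying $\ifpf$ to both sides yields
\[
[[\fpsi(z)]]_E=(\psi_E(q),\psi_E(r))\cdot[[z]]_E\cdot(\psi_E(q),\psi_E(r))=\fpsi([[z]]_E),
\]
where the final equality uses the first assertion to identify $(\psi_E(q),\psi_E(r))$ as the maximal FPF-visible inversion of $[[z]]_E$.

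The main bookkeeping obstacle will be carefully separating the two conventions $[z]_E\in\F_n$ and $[[z]]_E=\ifpf([z]_E)\in\F_\infty$, particularly when arguing that no FPF-visible inversion of $[[z]]_E$ can involve an index outside $[n]$; this requires the small case analysis using the structure of $\wfpf$ sketched above. Once this localization is in hand, everything else reduces to routine transfer via the order-preserving bijection $\psi_E$.
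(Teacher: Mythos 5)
Your proposal is correct and follows essentially the same route as the paper's (much terser) proof: the paper likewise establishes the first claim via the order-preserving bijection $\psi_E\times\psi_E$ between FPF-visible inversions of $z$ in $E\times E$ and all FPF-visible inversions of $[[z]]_E$, and deduces the second claim directly from the definition of $\fpsi$ using $\{q,r,z(q),z(r)\}\subset E$. Your explicit localization step (showing no FPF-visible inversion of $[[z]]_E$ involves an index outside $[n]$) and the conjugation identities are exactly the details the paper leaves implicit.
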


\begin{proof}
The first assertion holds since
the set of FPF-visible inversions of $z$ contained in $E\times E$
and the set of all FPF-visible inversions of $[[z]]_E$ are in bijection via the order-preserving map $\psi_E \times \psi_E$. 
The second claim follows from the definition of $\fpsi$ since $\{q,r,z(q),z(r)\} \subset E$.
\end{proof}

Define 
\be
\label{lfpf-eq}
\Lfpf(z) \omdef= \{ i \in \ZZ : i<p \text{ and }(i,p)y(i,p) \in \Pfpf^-(y,p)\}.
\ee
For any $E\subset \ZZ$ we define
\be\label{ccfpf-eq}
\Cfpf(z, E) \omdef= \left\{ (i,p)y(i,p) : i \in E\cap \Lfpf(z)\right\}.
\ee
Also let
$\Cfpf(z) \omdef= \Cfpf(z, \ZZ)$, so that $\Cfpf(z) = \fT_1(z)$
if $z$ is not FPF-Grassmannian.
The following shows that $\Cfpf(z)$ is always nonempty.

\begin{lemma} \label{fpf-vex-grass-lem}
If $z \in \F_\ZZ-\{\wfpf\}$ is FPF-Grassmannian, then $|\Cfpf(z)| = 1$. 
\end{lemma}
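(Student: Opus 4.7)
The strategy is to apply the FPF transition formula to the cycle of $y = \fpsi(z)$ determined by the maximal FPF-visible inversion of $z$, and then exploit the fact that $\Ffpf_z$ is a single Schur $P$-function together with Schur $P$-positivity to collapse the cover sum on the other side to one term.

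Let $(q,r)$ be the maximal FPF-visible inversion of $z$, set $y = \fpsi(z) = (q,r)z(q,r)$, and put $p = y(q) = z(r)$. By definition of an FPF-visible inversion, $z(r) < q$, so $p < q$ and $(p,q) \in \Cyc_\ZZ(y)$; unwinding the definitions of $\Lfpf(z)$ and $\Cfpf(z)$ also shows $\Cfpf(z) = \Pfpf^-(y,p)$ as sets. By Lemma~\ref{ft-lem00}, $\Pfpf^+(y,q) = \{z\}$, so applying Theorem~\ref{monkfpf-thm2} to the cycle $(p,q)$ of $y$ yields
\[
\sum_{z' \in \Cfpf(z)} \Ffpf_{z'} \;=\; \sum_{z'' \in \Pfpf^+(y,q)} \Ffpf_{z''} \;=\; \Ffpf_z.
\]
Since $z$ is FPF-Grassmannian, Theorem~\ref{fgrass-thm} identifies the right side as the single Schur $P$-function $P_{\flambda(z)}$.

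Next, I would invoke the Schur $P$-positivity result established in Section~\ref{fpf-pos-sect}: each $\Ffpf_{z'}$ expands as $\sum_\mu a_{z',\mu}P_\mu$ with $a_{z',\mu} \in \NN$. Equating coefficients against $P_{\flambda(z)}$ in the Schur $P$-basis and using nonnegativity forces $a_{z',\mu} = 0$ for all $\mu \ne \flambda(z)$ and $\sum_{z' \in \Cfpf(z)} a_{z',\flambda(z)} = 1$. Finally, each $\Ffpf_{z'}$ is nonzero, since $\cAfpf(z')$ is nonempty and so $\Ffpf_{z'}$ contains a reduced-word summand $f_{\mathbf{a}}$ with positive integer coefficients. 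Thus $a_{z',\flambda(z)} \geq 1$ for every $z' \in \Cfpf(z)$, and combined with $\sum_{z'} a_{z',\flambda(z)} = 1$ this forces $|\Cfpf(z)| = 1$, as desired.

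I do not expect any serious obstacle: all the technical ingredients (the transition identity, the singleton $\Pfpf^+(y,q) = \{z\}$, Schur $P$-positivity, and $\Ffpf_z = P_{\flambda(z)}$ for FPF-Grassmannian $z$) are already in place. The only delicate step is observing that each nonzero Schur $P$-positive summand contributes at least $1$ to the coefficient of $P_{\flambda(z)}$, which pins down the cardinality immediately.
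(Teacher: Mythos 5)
Your proof is correct, but it takes a genuinely different route from the paper's. The paper argues combinatorially: writing $z = \F(g)$ for an I-Grassmannian $g$ (via Proposition~\ref{semiinverse-prop}) and inspecting the pictures in Remark~\ref{fpf-remark}, it identifies the unique element explicitly as $(i,p)y(i,p)$ where $i$ is the greatest integer less than $p$ with $y(i)<q$. You instead run an algebraic positivity argument: $\Cfpf(z)=\Pfpf^-(y,p)$, so by Theorem~\ref{monkfpf-thm2} and Lemma~\ref{ft-lem00} the sum $\sum_{z'\in\Cfpf(z)}\Ffpf_{z'}$ equals $\Ffpf_z=P_{\flambda(z)}$ (Theorem~\ref{fgrass-thm}), and since each $\Ffpf_{z'}$ is nonzero and Schur $P$-positive with integer coefficients, comparing coefficients in the $P_\lambda$-basis forces a single summand. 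All ingredients you cite are established before this lemma (the Schur $P$-positivity corollary in Section~\ref{fpf-pos-sect} does not depend on Lemma~\ref{fpf-vex-grass-lem}), so there is no circularity, and the identification $\Cfpf(z)=\Pfpf^-(y,p)$ and the membership $(p,q)\in\Cyc_\ZZ(y)$ both check out. What you give up is the explicit description of the unique element of $\Cfpf(z)$, but that description is not used downstream (only the cardinality is, in Lemma~\ref{fpf-vex-lem3}); what you gain is the avoidance of the case analysis in Remark~\ref{fpf-remark}, at the cost of invoking heavier machinery (the transition identity and Schur $P$-positivity) for a statement the paper treats as an elementary check.
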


\begin{proof}
Assume $z \in \F_\ZZ-\{\wfpf\}$ is FPF-Grassmannian.
By Proposition~\ref{semiinverse-prop} we have $z = \Fmap(g)$ for an I-Grassmannian involution
$g \in \I_\ZZ$. Using this fact and the observations in Remark~\ref{fpf-remark},
one checks that $\Cfpf(z) = \{(i,p)y(i,p)\}$
where $i$ is the greatest integer less than $p$ such that $y(i) < q$.
\end{proof}

\begin{lemma} \label{fpf-vex-lem2}
Let $E\subset \ZZ$ be a finite set 
such that $\{q,r\} \subset E$ and $z(E) = E$.
\ben
\item[(a)] The operation $v \mapsto [[v]]_E$ restricts to an injective map $\Cfpf(z,E) \to \Cfpf([[z]]_E)$.

\item[(b)] If $E$ contains $\Lfpf(z)$, then the injective map in  (a) is a bijection.
\een
\end{lemma}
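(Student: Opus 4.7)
The plan is to use the order-preserving bijection $\psi_E : E \to [|E|]$ to transport Bruhat-cover data between $z$ and its pattern $[[z]]_E$. The setup is already prepared by Lemma~\ref{fpf-vex-lem1}: since $\fpsi([[z]]_E) = [[y]]_E$ and the maximal FPF-visible inversion of $[[z]]_E$ is $(\psi_E(q), \psi_E(r))$, setting $p' := \psi_E(p)$ gives
\[
\Cfpf([[z]]_E) = \{(i',p')[[y]]_E(i',p') : i' < p',\ (i',p')[[y]]_E(i',p') \in \Pfpf^-([[y]]_E, p')\}.
\]
Note also that $E$ is $y$-invariant, since $y$ and $z$ agree off the set $\{p,q,r,z(q)\} \subseteq E$.

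For part (a), I take $v = (i,p)y(i,p) \in \Cfpf(z,E)$ with $i \in E \cap \Lfpf(z)$. Because $i, p \in E$ and $y(E) = E$, the value $y(i)$ lies in $E$, so $v$ preserves $E$ and $[[v]]_E = (\psi_E(i), p')[[y]]_E(\psi_E(i), p')$. To verify $[[v]]_E \in \Pfpf^-([[y]]_E, p')$ I check both conditions of Proposition~\ref{fpfbruhatcover-prop}. Condition (a), the absence of an intermediate index, transfers because any obstructing index $e' \in [|E|]$ would pull back under $\phi_E$ to $e \in E \subseteq \ZZ$ obstructing the cover $y \lessdot_\F v$. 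Condition (b) is a local four-element pattern condition, preserved by any order-preserving bijection (using that $\{i,p,y(i),q\}$ has four distinct elements, which follows from $y$ being FPF with $y(p)=q$). Injectivity is immediate from injectivity of $\psi_E$.

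For part (b), I prove surjectivity under the additional assumption $\Lfpf(z) \subseteq E$. Given any $v' = (i', p')[[y]]_E(i', p') \in \Cfpf([[z]]_E)$, set $i_0 = \phi_E(i') \in E$; I aim to show $i_0 \in \Lfpf(z)$, from which $v' = [[(i_0,p)y(i_0,p)]]_E$ will realize $v'$ as the image of an element of $\Cfpf(z,E)$. Condition (b) of Proposition~\ref{fpfbruhatcover-prop} for $y \lessdot_\F (i_0,p)y(i_0,p)$ transfers from $[[y]]_E$ back to $y$ by the same order-preservation argument as in (a).

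The main obstacle, and the only place the hypothesis $\Lfpf(z) \subseteq E$ is used, is verifying condition (a) for $y$: there is no $e \in \ZZ$ with $i_0 < e < p$ and $y(i_0) < y(e) < q$. Suppose for contradiction such $e$ exists, and choose $e^*$ with $y(e^*)$ maximal among such $e$. The maximality immediately yields condition (a) of Proposition~\ref{fpfbruhatcover-prop} for the pair $y, (e^*,p)y(e^*,p)$, while condition (b) is automatic: a case check on the position of $y(e^*)$ relative to $e^*$ and $p$ shows that the pattern of $y$ on $\{e^*, p, y(e^*), q\}$ is $(1,2)(3,4)$ when $y(e^*) < p$ and $(1,3)(2,4)$ when $p < y(e^*) < q$. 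Hence $e^* \in \Lfpf(z) \subseteq E$, so $\psi_E(e^*) \in [|E|]$ satisfies $\psi_E(i_0) < \psi_E(e^*) < p'$ and $[[y]]_E(\psi_E(i_0)) < [[y]]_E(\psi_E(e^*)) < [[y]]_E(p')$, contradicting $v' \in \Pfpf^-([[y]]_E, p')$. This completes the proof.
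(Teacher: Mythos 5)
Part (a) of your argument is fine and agrees with the paper's (which dismisses it as straightforward). The gap is in part (b), in the sentence asserting that condition (b) of Proposition~\ref{fpfbruhatcover-prop} is ``automatic'' for the pair $(e^*,p)$. Knowing that $[y]_A$ for $A=\{e^*,p,y(e^*),q\}$ is the pattern $(1,2)(3,4)$ or $(1,3)(2,4)$ is not what condition (b) requires: it requires $[y]_A \lessdot_\F [(e^*,p)y(e^*,p)]_A$, and this fails exactly when $e^*<y(e^*)<p$. In that sub-case $A$ sorts as $e^*<y(e^*)<p<q$, so $[y]_A=(1,2)(3,4)$ but $[(e^*,p)]_A=(1,3)$, and conjugation yields $(1,4)(2,3)$, for which $\ellfpf$ jumps by $2$ rather than $1$; hence $e^*\notin\Lfpf(z)$ and your contradiction disappears. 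This sub-case is not excluded by your maximality rule: if $y$ contains the cycles $(1,2)$, $(3,4)$, $(6,10)$ with $i_0=2$, $p=6$, $q=10$, the obstructing indices are $3$ and $4$, your rule (maximal $y(e^*)$) selects $e^*=3$, and one checks that $3\notin\Lfpf(z)$ while its partner $4=y(3)$ does lie in $\Lfpf(z)$.

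This is precisely the subtlety the paper's proof is organized around: taking $j$ to be the maximal obstructing index, it shows that \emph{either $j$ or $k=z(j)$} belongs to $\Lfpf(z)$ but not to $E$. Your argument can be repaired in the same spirit: when $e^*<y(e^*)<p$, pass to the partner $y(e^*)$, for which $[(y(e^*),p)]_A=(2,3)$ does produce a cover, and verify condition (a) for the pair $(y(e^*),p)$ by a further maximality argument; since $E$ is $y$-invariant, $y(e^*)\in\Lfpf(z)\subset E$ forces $e^*\in E$ as well, and then $\psi_E(e^*)$ supplies the obstruction in $[[y]]_E$ contradicting $v'\in\Pfpf^-([[y]]_E,p')$. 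As written, however, the proof of (b) is incomplete.
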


\begin{proof}
Part (a) is straightforward from the definition of $\Cfpf(z)$ given Lemma~\ref{fpf-vex-lem1}.
We prove the contrapositive of part (b).
Suppose $a<b = \psi_E(p)$ and $(a,b)[[y]]_E(a,b)$ belongs to $\Cfpf([[z]]_E)$
but is not in the image of $\Cfpf(z,E)$ under the map $v \mapsto [[v]]_E$.
Suppose $a = \psi_E(i)$ for $i \in E$.
Then $(a,b)[[y]]_E(a,b) = [[(i,p)y(i,p)]]_E$,
and 
it follows from Proposition~\ref{fpfbruhatcover-prop}
that $[[y]]_E(a) < [[y]]_E(b)$, so we likewise have $y(i) < y(p)$.
Since $(i,p)y(i,p) \notin \Cfpf(z,E)$, 
there must exist an integer $j$ with $i<j<p$ and $y(i)<y(j)<y(p)$.
Let $j$ be maximal with this property and set $k = z(j)$.
One can check using Proposition~\ref{fpfbruhatcover-prop} that 
 either $j$ or $k$ belongs to $ \Lfpf(z)$ but not $E$, so $E\not\supset \Lfpf(z)$.
\end{proof}

We say that $z \in \F_\ZZ$ \emph{contains a bad FPF-pattern} if there is a finite set $E\subset \ZZ$
with $z(E) = E$ and $|E| \leq 12$, 
such that $[[z]]_E$ is not FPF-vexillary.  We refer to  $E$
as a \emph{bad FPF-pattern} for $z$.

\begin{lemma}\label{fpf-vex-lem3}
If  $z \in \F_\ZZ$ is such that $|\fT_1(z)| \geq 2$, then $z$ contains a bad FPF-pattern.
\end{lemma}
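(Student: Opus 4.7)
The plan is to construct an explicit $z$-invariant set $E_0 \subset \ZZ$ of size at most $8$ that witnesses the branching in $\fT_1(z)$, and then combine Lemmas~\ref{fpf-vex-lem2} and~\ref{fpf-vex-grass-lem} with the linear independence of Schur $P$-functions to deduce that $[[z]]_{E_0}$ is not FPF-vexillary.

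Since $|\fT_1(z)| \geq 2$, the involution $z$ is not FPF-Grassmannian, so $\fT_1(z) = \Cfpf(z) = \{(i,p) y (i,p) : i \in \Lfpf(z)\}$, where $(q,r)$ is the maximal FPF-visible inversion of $z$, $y = \fpsi(z)$, and $p = y(q) = z(r)$. Choose distinct $i_1, i_2 \in \Lfpf(z)$; the corresponding involutions $v_j = (i_j,p)y(i_j,p)$ are distinct because $v_j(q) = i_j$. Let $E_0 = \{q, z(q), r, p, i_1, z(i_1), i_2, z(i_2)\}$, which is visibly $z$-invariant. Using Remark~\ref{fpf-remark}, the four elements $q, r, z(q), p$ are always distinct; the inequalities $i_1, i_2 < p$ combined with the cycle structure of $z$ rule out coincidences between $\{i_1, z(i_1), i_2, z(i_2)\}$ and $\{q, r, z(q), p\}$, and the only collision that can occur inside the former set is $z(i_1) = i_2$. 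Thus $|E_0| \leq 8 \leq 12$.

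Because $\{q, r\} \subset E_0$, Lemma~\ref{fpf-vex-lem2}(a) provides an injection $\Cfpf(z, E_0) \hookrightarrow \Cfpf([[z]]_{E_0})$; since $i_1, i_2 \in E_0 \cap \Lfpf(z)$, the source has at least two elements, and hence $|\Cfpf([[z]]_{E_0})| \geq 2$. By Lemma~\ref{fpf-vex-grass-lem} this forces $[[z]]_{E_0}$ to not be FPF-Grassmannian, so $\fT_1([[z]]_{E_0}) = \Cfpf([[z]]_{E_0})$ likewise has size at least $2$. Theorem~\ref{f-finite-thm} guarantees that $\fT([[z]]_{E_0})$ is finite, and since each subtree rooted at a child contains at least one leaf, this tree has at least two leaves. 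Then Corollary~\ref{ft-cor1}(b) and Theorem~\ref{fgrass-thm} together express $\Ffpf_{[[z]]_{E_0}}$ as a sum (indexed by the leaves) of at least two Schur $P$-functions with positive integer multiplicities. Since the $P_\lambda$ are linearly independent in $\Lambda$, this sum cannot equal a single $P_\nu$, so $[[z]]_{E_0}$ is not FPF-vexillary, and $E_0$ is a bad FPF-pattern for $z$.

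The main obstacle is the combinatorial check that $|E_0| \leq 8$, which amounts to ruling out coincidences among the eight listed elements via Remark~\ref{fpf-remark} and the inequalities $i_1, i_2 < p$; once this is in hand, the remainder is a clean synthesis of the preceding structural lemmas with the linear independence of Schur $P$-functions in $\Lambda$.
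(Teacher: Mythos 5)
Your proof is correct and follows essentially the same route as the paper: take two distinct children $(i_1,p)y(i_1,p)\neq(i_2,p)y(i_2,p)$ of $z$, form the $z$-invariant set of size at most $8$ on which $z$ and these children can disagree, and apply Lemma~\ref{fpf-vex-lem2}(a) together with Lemma~\ref{fpf-vex-grass-lem} to see that $[[z]]_{E_0}$ is not FPF-Grassmannian and has at least two children, hence is not FPF-vexillary. The paper leaves the final step (that $|\fT_1(w)|\geq 2$ forces $w$ not FPF-vexillary, via Corollary~\ref{ft-cor1}(b) and linear independence of the $P_\lambda$) implicit, whereas you spell it out; this is a faithful expansion, not a different argument.
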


\begin{proof}
If $u\neq v$ and $\{u,v\} \subset \fT_1(z)$,
then $u$, $v$, and $z$ agree outside a set $E\subset \ZZ$ of size 8
with $z(E)=E$.
It follows by Lemmas~\ref{fpf-vex-grass-lem} and \ref{fpf-vex-lem2}
that $E$ is a bad FPF-pattern for $z$.
\end{proof}

\begin{lemma}\label{fpf-vex-lem4}
Suppose  $z \in \F_\ZZ$ is such that $\fT_1(z) = \{v\}$ is a singleton set.
Then $z$ contains no bad FPF-patterns if and only if  $v$ contains no bad FPF-patterns.
\end{lemma}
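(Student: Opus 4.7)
My plan is to transfer bad FPF-patterns between $z$ and $v$ using the restriction machinery of Lemmas~\ref{fpf-vex-lem1}--\ref{fpf-vex-lem3}. Let $n$ denote the unique element of $\Lfpf(z)$, so that $v = (n,p)y(n,p)$ with $y = \fpsi(z)$, $p = y(q) = z(r)$, and $(q,r)$ the maximal FPF-visible inversion of $z$. Set $T = \{n, y(n), p, q, r, z(q)\}$. By the analysis in Remark~\ref{fpf-remark}, $T$ is invariant under both $z$ and $v$, the three $z$-cycles contained in $T$ are $\{n,y(n)\}$, $\{p,r\}$, $\{z(q),q\}$, and the three $v$-cycles are $\{n,q\}$, $\{y(n),p\}$, $\{z(q),r\}$. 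In particular $z$ and $v$ agree outside $T$, so any $z$-invariant finite $E$ disjoint from $T$ is automatically $v$-invariant with $[[z]]_E = [[v]]_E$, and is a bad FPF-pattern for one involution iff it is a bad FPF-pattern for the other.

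The key step is the following reduction: for any finite $E \supset T$ with $z(E) = E$, the involution $[[z]]_E$ is FPF-vexillary if and only if $[[v]]_E$ is FPF-vexillary. To see this, Lemma~\ref{fpf-vex-lem1} shows that $(\psi_E(q),\psi_E(r))$ is the maximal FPF-visible inversion of $[[z]]_E$ and $\fpsi([[z]]_E) = [[y]]_E$. Since $\Lfpf(z) = \{n\} \subset E$, Lemma~\ref{fpf-vex-lem2}(b) gives a bijection $\Cfpf(z,E) \to \Cfpf([[z]]_E)$, and $\Cfpf(z,E) = \{v\}$ forces $\Cfpf([[z]]_E) = \{[[v]]_E\}$. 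When $[[z]]_E$ is not FPF-Grassmannian this is exactly $\fT_1([[z]]_E)$, so Corollary~\ref{ft-cor1}(b) yields $\Ffpf_{[[z]]_E} = \Ffpf_{[[v]]_E}$; when it is FPF-Grassmannian, both $[[z]]_E$ and $[[v]]_E$ are FPF-vexillary by Theorem~\ref{fgrass-thm} combined with Lemma~\ref{quasi-lem}. Either way $[[z]]_E$ and $[[v]]_E$ share FPF-vexillarity status.

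I would then prove the contrapositive of the lemma in each direction. Suppose $E$ is a bad FPF-pattern for $z$. If $E \cap T = \emptyset$, the remark above shows $E$ is also a bad FPF-pattern for $v$. Otherwise I set $E^\ast = E \cup T$, which is both $z$- and $v$-invariant; by the key reduction, it suffices to show $[[z]]_{E^\ast}$ is non-FPF-vexillary, and for this I would use Lemma~\ref{fpf-vex-lem2} (now applied to $[[z]]_{E^\ast}$ restricted back down to $[[z]]_E$) together with the observation that any Grassmannian leaf of $\fT([[z]]_{E^\ast})$ restricts to a Grassmannian leaf of $\fT([[z]]_E)$: since $\Ffpf_{[[z]]_E}$ is not a single Schur $P$-function, neither can $\Ffpf_{[[z]]_{E^\ast}}$ be. The reverse implication (bad FPF-pattern for $v$ yields one for $z$) is handled by the symmetric argument with the roles of $z$ and $v$ swapped.

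The main obstacle is the size bound $|E^\ast| \leq 12$ in the enlargement step. Naively $|E^\ast| \leq |E| + |T| \leq 18$, but $E$ being $z$- (resp.\ $v$-) invariant forces $E \cap T$ to be a union of whole $z$- (resp.\ $v$-) cycles in $T$, so $|E \cap T| \in \{0,2,4,6\}$. The nontrivial cases $|E \cap T| \in \{2,4\}$ still leave room for $|E^\ast|$ to exceed $12$, so the bound must be recovered by first replacing $E$ with a minimal bad FPF-pattern and analyzing the three cases on $|E \cap T|$ separately, using the explicit structure of the $z$- and $v$-cycles in $T$ listed above. This bookkeeping, together with the propagation of non-FPF-vexillarity under extension via the tree structure, is the most technical part of the argument.
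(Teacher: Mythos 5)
Your reduction framework is essentially the paper's: you identify the (at most) $6$-element set $T$ outside of which $z$ and $v$ agree, transfer patterns disjoint from $T$ directly, and for a pattern $B$ meeting $T$ pass to $E^*=B\cup T$ (of size at most $16$, by the cycle-union observation) and use Lemma~\ref{fpf-vex-lem2}(b) to get $\Cfpf([[z]]_{E^*})=\{[[v]]_{E^*}\}$. Your ``key reduction'' --- that $[[z]]_{E^*}$ and $[[v]]_{E^*}$ are FPF-vexillary or not together --- is also correct, via Corollary~\ref{ft-cor1}(b).

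The gap is in how you finish. You need to pass from ``$[[z]]_{E^*}$ \emph{contains a bad FPF-pattern}'' to ``$[[v]]_{E^*}$ \emph{contains a bad FPF-pattern}'', and your route is: bad sub-pattern of $[[z]]_{E^*}$ $\Rightarrow$ $[[z]]_{E^*}$ not FPF-vexillary $\Rightarrow$ $[[v]]_{E^*}$ not FPF-vexillary $\Rightarrow$ $[[v]]_{E^*}$ has a bad sub-pattern. The first and last implications are exactly the equivalence ``FPF-vexillary $\Leftrightarrow$ no bad FPF-patterns'', i.e.\ Theorem~\ref{fpf-vex-thm} --- whose proof \emph{depends on this lemma}. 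Your proposed justification of the first implication (``any Grassmannian leaf of $\fT([[z]]_{E^*})$ restricts to a Grassmannian leaf of $\fT([[z]]_E)$'') is not established anywhere and is not obviously true: Lemma~\ref{fpf-vex-lem2} controls only the first level of the tree, and the bijectivity hypothesis $E\supset\Lfpf(\cdot)$ can fail at deeper levels. The last implication is worse, since $|E^*|$ can exceed $12$, so non-vexillarity of $[[v]]_{E^*}$ gives no bad pattern for $v$ without again invoking the unproven equivalence. You acknowledge the size bookkeeping is unresolved, but the problem is not just bookkeeping --- it is circularity. The paper avoids all of this: after the same reduction to involutions in $\ifpf(\F_{16})$, it simply verifies the statement for all $940{,}482$ relevant cases by computer. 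If you want a non-computational proof of this step you would need a genuinely new argument, not the one sketched here.
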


\begin{proof}
By definition, $z$ and $v$ agree outside a set $A\subset \ZZ$ of size 6 with $v(A)=z(A)=A$.
If $z$ (respectively, $v$) contains a bad FPF-pattern that is disjoint from $A$, then 
the
other involution clearly does also.
If $z$ contains a bad FPF-pattern $B$ that intersects $A$, then $E = A\cup B$ 
has size at most 16 since $|B| \leq 12$ and both $A$ and $B$
are $z$-invariant.
In this case, 
$[[z]]_E$ contains a bad FPF-pattern
and Lemma~\ref{fpf-vex-lem2}(b)
shows that $\Cfpf([[z]]_E) = \{ [[v]]_E\}$, and if $[[v]]_E$ contains a bad FPF-pattern then $v$ does also.
By similar arguments, it follows that if 
$v$ contains a bad FPF-pattern $B$ that intersects $A$, then 
$E = A\cup B$ has size at most 16,  $[[v]]_E$ contains a bad FPF-pattern, 
$\Cfpf([[z]]_E) = \{ [[v]]_E\}$, and $v$ contains a bad FPF-pattern if $[[v]]_E$ does.

These observations
show that to prove the lemma, it suffices to consider the case when $z$ 
belongs to the image of $\ifpf : \F_{16} \hookrightarrow \F_\ZZ$.
Using a computer, we have checked that if $z$ is such an involution and 
 $\Cfpf(z) = \{v\}$ is a singleton set, then
$z$ contains no bad FPF-patterns if and only if  $v$ contains no bad FPF-patterns. 
There are  940,482 
possibilities for $z$, a sizeable but tractable number.
\end{proof}

\begin{theorem}\label{fpf-vex-thm}
 An involution $z \in \F_\ZZ$
is FPF-vexillary if and only if $[[z]]_E$ is FPF-vexillary for all sets $E\subset \ZZ$ with $z(E) = E$
and $|E| =12$.
\end{theorem}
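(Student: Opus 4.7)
The plan is to reformulate FPF-vexillarity as a condition on the tree $\fT(z)$ and then prove the pattern-avoidance characterization by induction. The key observation is that every leaf $v$ of $\fT(z)$ is FPF-Grassmannian and so contributes $\Ffpf_v = P_{\flambda(v)}$ by Theorem~\ref{fgrass-thm}; combining this with Theorem~\ref{f-finite-thm} gives $\Ffpf_z = \sum_v P_{\flambda(v)}$, summed with multiplicity over all leaves. Distinct Schur $P$-functions are linearly independent, and Corollary~\ref{fpf-ut-cor} forces the coefficient of $P_{\flambda(z)}$ in this sum to be exactly $1$, so $z$ is FPF-vexillary if and only if $\fT(z)$ has exactly one leaf.

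With this in hand, I would prove by induction on the depth of $\fT(z)$ (which is finite by Theorem~\ref{f-finite-thm}) the intermediate claim that $z$ is FPF-vexillary if and only if $z$ contains no bad FPF-pattern. The base case is when $z$ is FPF-Grassmannian: Theorem~\ref{fgrass-thm} gives FPF-vexillarity, and Lemma~\ref{quasi-lem} ensures that $[[z]]_E$ is FPF-Grassmannian, hence FPF-vexillary, for every $z$-invariant $E$. For the inductive step, if $|\fT_1(z)| \geq 2$, then each child subtree contributes at least one leaf, so $\fT(z)$ has at least two leaves and $z$ is not FPF-vexillary; Lemma~\ref{fpf-vex-lem3} then supplies a bad FPF-pattern. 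If instead $\fT_1(z) = \{v\}$, then Corollary~\ref{ft-cor1}(b) gives $\Ffpf_z = \Ffpf_v$, so $z$ is FPF-vexillary iff $v$ is; the inductive hypothesis applied to $v$ (whose tree has strictly smaller depth) together with Lemma~\ref{fpf-vex-lem4} completes the step.

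It remains to upgrade ``bad FPF-pattern'' (size $\leq 12$) to ``bad FPF-pattern of size exactly $12$'', as stated in the theorem. The nontrivial direction is: given a bad FPF-pattern $E$ of (necessarily even) size $k < 12$, I would enlarge $E$ by adjoining $(12-k)/2$ cycles of $z$ of the form $(a,a+1)$ with $a$ odd and $a > \max(E)$. Such cycles exist in abundance since $z$ agrees with $\wfpf$ outside a finite set. The resulting set $E'$ has size $12$ and is $z$-invariant, and $[[z]]_{E'}$ is obtained from $[[z]]_E$ by appending trailing trivial $\wfpf$-cycles. Under $\ifpf$, both $[[z]]_E$ and $[[z]]_{E'}$ map to the same element of $\F_\infty$, so $\Ffpf_{[[z]]_{E'}} = \Ffpf_{[[z]]_E}$ and FPF-vexillarity is preserved; hence $E'$ is still a bad pattern of the required size.

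The hardest step is Lemma~\ref{fpf-vex-lem4}, on which the inductive reduction to a finite pattern set crucially depends; its proof already relied on a computer verification for roughly $9.4 \times 10^{5}$ cases in $\F_{16}$. Everything else in the plan is formal manipulation of the tree together with the stability $\Ffpf_z = \Ffpf_{z \gg N}$ from Theorem-Definition~\ref{fF-def}, so any conceptual improvement would need to replace that exhaustive check with a structural reason why ``containing a bad FPF-pattern'' is invariant across the single-child step $z \mapsto v$ in $\fT(z)$.
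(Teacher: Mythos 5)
Your proposal is correct and follows essentially the same route as the paper: the same recursion over the tree $\fT(z)$ using Lemmas~\ref{quasi-lem}, \ref{fpf-vex-lem3} and \ref{fpf-vex-lem4} together with Corollary~\ref{ft-cor1}(b), with the paper's recursively defined set $\cX$ replaced by an explicit induction on tree depth. The only additions are points the paper leaves implicit --- the leaf-counting reformulation of FPF-vexillarity and the padding of a bad pattern of size $k<12$ up to size exactly $12$ by adjoining trailing cycles $(a,a+1)$ of $z$ --- both of which you handle correctly.
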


\begin{proof}
Let $\cX \subset \F_\ZZ$ be the set that contains $z\in \F_\ZZ$
 if and only if $z$ is FPF-Grassmannian or 
$\fT_1(z) = \{v\}$ and $v \in \cX$.
It follows from Corollary~\ref{ft-cor1}(b) that 
$\cX$ is the set of all FPF-vexillary involutions in $\F_\ZZ$.
On the other hand,
Lemmas~\ref{quasi-lem}, \ref{fpf-vex-lem3}, and \ref{fpf-vex-lem4}
show that $\cX$ is the set of involutions $z \in \F_\ZZ$
that contain no bad FPF-patterns.
Thus $z \in \F_\ZZ$ is FPF-vexillary if and only if $z$ has no
bad FPF-patterns, which is equivalent to the theorem statement.
\end{proof}

\begin{corollary}\label{fvex-cor}
An involution $z \in \F_\ZZ$ is FPF-vexillary if and only if for
all finite sets $E\subset \ZZ$ with $z(E) = E$
the involution $[z]_E$ is not any of the following sixteen permutations:
{\small
\[
\ba
(1,3)(2,4)(5,8)(6,7),        &&&&&&& (1,5)(2,3)(4,7)(6,8),   &&&&&&    (1,6)(2,4)(3,8)(5,7), \\
(1,3)(2,5)(4,7)(6,8),        &&&&&&& (1,5)(2,3)(4,8)(6,7),   &&&&&&    (1,6)(2,5)(3,8)(4,7), \\
(1,3)(2,5)(4,8)(6,7),        &&&&&&& (1,5)(2,4)(3,7)(6,8),   &&&&&&    (1,3)(2,4)(5,7)(6,9)(8,10), \\
(1,3)(2,6)(4,8)(5,7),        &&&&&&& (1,5)(2,4)(3,8)(6,7),   &&&&&&    (1,3)(2,5)(4,6)(7,9)(8,10), \\
(1,4)(2,3)(5,7)(6,8),        &&&&&&& (1,6)(2,3)(4,8)(5,7),   &&&&&&    (1,3)(2,4)(5,7)(6,8)(9,11)(10,12). \\
(1,4)(2,3)(5,8)(6,7),
\ea
\]
}
\end{corollary}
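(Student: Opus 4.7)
The plan is to invoke Theorem \ref{fpf-vex-thm} and the characterization established in its proof to reduce the corollary to a finite computational verification. The proof of that theorem shows that $\cX$, the set of FPF-vexillary involutions in $\F_\ZZ$, coincides with the set of involutions containing no bad FPF-pattern, where a bad FPF-pattern for $z$ is a $z$-invariant set $E\subset\ZZ$ with $|E|\leq 12$ such that $[[z]]_E$ is not FPF-vexillary. Consequently, it suffices to identify the minimal (under pattern containment) non-FPF-vexillary involutions in $\F_n$ for $n \in \{2,4,6,8,10,12\}$, and then to verify that the resulting list coincides with the sixteen permutations displayed in the statement.

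First I would confirm, using direct computation of the FPF-involution Lascoux-Sch\"utzenberger tree $\fT(z)$ together with Corollary \ref{ft-cor1}(b) and Theorem \ref{fgrass-thm}, that every element of $\F_n$ is FPF-vexillary for $n \in \{2,4,6\}$. I would then iterate through $n = 8, 10, 12$: for each $z \in \F_n$, compute the expansion $\Ffpf_z = \sum_{v} P_{\flambda(v)}$ by enumerating the leaves of $\fT(z)$, declare $z$ to be FPF-vexillary if and only if the expansion consists of a single Schur $P$-summand, and then collect the non-FPF-vexillary involutions. Discarding those that contain a strictly smaller non-FPF-vexillary pattern yields the list of minimal non-FPF-vexillary involutions of size at most 12, which the computation should show coincides with the sixteen permutations of sizes 8, 10, and 12 displayed in the corollary.

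With this list established, the two directions of the corollary follow immediately. For the forward implication, if $z$ is FPF-vexillary and $z(E)=E$, then $[[z]]_E\in\cX$ by the characterization of $\cX$ in the proof of Theorem \ref{fpf-vex-thm}, so $[z]_E$ cannot equal any of the sixteen non-FPF-vexillary permutations. For the reverse implication, if $z$ is not FPF-vexillary, then it contains a bad FPF-pattern of size at most 12, which by minimality of the sixteen patterns must itself contain one of them. The main obstacle is simply the scale of the enumeration in $\F_{12}$, where $|\F_{12}| = 11!! = 10{,}395$; however, this is substantially smaller than the enumeration of $940{,}482$ involutions in $\F_{16}$ already executed in the proof of Lemma \ref{fpf-vex-lem4}, and the tree algorithm of Section \ref{fpf-pos-sect} makes each individual check efficient.
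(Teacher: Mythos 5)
Your proposal is correct and follows essentially the same route as the paper: reduce to patterns of size at most $12$ via Theorem~\ref{fpf-vex-thm}, then determine the non-FPF-vexillary elements of $\F_n$ for $n\leq 12$ by a computer calculation based on the Lascoux--Sch\"utzenberger tree formulas of Theorems~\ref{fgrass-thm} and \ref{f-finite-thm}. The paper states this in two sentences; your write-up merely makes the enumeration and the minimality argument explicit.
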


\begin{proof}
It follows by a computer calculation using the formulas
in Theorems \ref{fgrass-thm} and \ref{f-finite-thm} that
$z \in \ifpf(\F_{12}) \subset \F_\infty$ is not FPF-vexillary if and only if
there is a $z$-invariant subset $E\subset \ZZ$ such that $[z]_E$
is one of the given involutions. The corollary follows from this fact by Theorem \ref{fpf-vex-thm}.
\end{proof}

\section{Pfaffian formulas}\label{pfaffian-fpf-sect}

The \emph{Pfaffian} of a skew-symmetric $n\times n$ matrix $A$
is 
\be\label{pf-eq}
 \pf A
\omdef= \sum_{z \in \F_n} (-1)^{\ellfpf(z)} \prod_{z(i)<i \in [n] } A_{z(i),i}
.\ee
It is a classical fact that $\det A  = (\pf A)^2$. Since $\det A = 0$ when $A$ is skew-symmetric
but $n$ is odd, the definition \eqref{pf-eq}
is consistent with the fact that the set $\F_n$ of fixed-point-free involutions in
$S_n$ is nonempty only if $n$ is even.
If $A = (a_{ij})$ is a $2\times 2$ skew-symmetric matrix then $\pf A = a_{12} = -a_{21}$.
If $A = (a_{ij})$ is a $4\times 4$ skew-symmetric matrix then
$\pf A = a_{21} a_{43} - a_{31}a_{42} + a_{41}a_{32}$.

Both  $\Sfpf_z$ and $\Ffpf_z$ can be expressed by certain Pfaffian formulas 
when $z$ is FPF-Grassmannian.
We fix the following notation for the duration of this section: first, let
\be
\label{fpf-nrphi-eq}
n,r \in \PP \qquand \phi \in \PP^r \text{ with }0 < \phi_1 < \phi_2<\dots <\phi_r< n.
\ee
Set $\phi_i=0$ for $i>r$.
Define
$y =  (\phi_1, n+1)(\phi_2,n+2)\cdots (\phi_r,n+r) \in \I_\infty$ and
$z= \Fmap(y)$.
Let
\be\label{sfpf-ffpf-eq}
\Sfpf[\phi_1,\phi_2,\dots,\phi_r;n]  \omdef= \Sfpf_z
\qquand
\Ffpf[\phi_1,\phi_2,\dots,\phi_r;n] \omdef= \Ffpf_z.
\ee
When $r$ is odd, we also set $\Sfpf[\phi_1,\phi_2,\dots,\phi_r,0;n] \omdef= \Sfpf_z$ and
$\Ffpf[\phi_1,\phi_2,\dots,\phi_r,0;n] \omdef= \Ffpf_z$.

\begin{proposition}\label{fpf-pf-prop}
In the notation just given, $z \in \F_\infty$ is FPF-Grassmannian
with shape $\flambda(z) = (n-\phi_1, n-\phi_2,\dots,n-\phi_r)$.
Moreover, each FPF-Grassmannian element of $\F_\infty-\{\wfpf\}$
occurs as such an involution $z$ for a unique choice of
$n,r \in \PP$ and $\phi \in \PP^r$ as in \eqref{fpf-nrphi-eq}.
\end{proposition}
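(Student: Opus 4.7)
The plan is to prove three things in order: (a) $z = \F(y)$ lies in $\F_\infty$; (b) $z$ is FPF-Grassmannian with $\flambda(z) = (n - \phi_1, \ldots, n - \phi_r)$; and (c) every FPF-Grassmannian element of $\F_\infty - \{\wfpf\}$ arises from a unique parameter triple $(n,r,\phi)$ as in \eqref{fpf-nrphi-eq}. Claim (a) is immediate: since $\supp(y) \subset [n+r] \subset \PP$, the definition of $\F$ makes $\F(y)$ agree with $\wfpf$ outside a bounded subset of $\PP$, so $\F(y) \in \F_\infty$.

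For (b), I would write down the cycles of $z$ explicitly: they consist of the $y$-cycles $(\phi_i, n+i)$ for $i \in [r]$ together with the $\F$-added pairs $(\phi(2k-1), \phi(2k))$ for $k \in \ZZ$, where $\phi\colon \ZZ \to \ZZ \setminus \supp(y)$ is the order-preserving bijection sending $0$ to any fixed even integer $m < \phi_1$. The strategy is to compute $\I(z)$ directly from its definition, check that the resulting involution is I-Grassmannian, and read off the shape. The $y$-cycle $(\phi_i, n+i)$ lies in $\I(z)$ because either the cycle $(\phi_1, n+1)$ (when $i \geq 2$) or an $\F$-added pair whose upper endpoint is a fixed point of $y$ in $(\phi_1, n]$ (when $i = 1$; such a pair exists because $\phi_r < n$ forces $n$ itself to be a fixed point of $y$) supplies a nesting witness. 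For the $\F$-added pairs, the interior of any such pair meets $\ZZ$ only inside $\supp(y)$, so the only potential nesting witnesses are the integers $n+1,\ldots,n+r$; a parity analysis of the $\phi$-index of $n$ (equivalently, of $n-r$) shows that the $\F$-added pair closest to the block $\{n+1, \ldots, n+r\}$ sits either strictly below or strictly above this block, and so no $\F$-added pair is nested. The resulting $\I(z)$ is an I-Grassmannian whose parameters $(n',r',\phi')$ satisfy $(n'-\phi'_1,\ldots,n'-\phi'_{r'}) = (n - \phi_1, \ldots, n - \phi_r)$, yielding the shape formula.

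Finally, for (c), I would invoke Proposition~\ref{semiinverse-prop}, which gives $\F(\I(z)) = z$ for any $z \in \F_\ZZ$: given an FPF-Grassmannian $z \in \F_\infty - \{\wfpf\}$, the involution $\I(z)$ is I-Grassmannian by definition, and reading off its parameters as in \eqref{fpf-nrphi-eq} supplies both existence and uniqueness of $(n, r, \phi)$, since these parameters are recoverable from $\I(z)$, which is in turn recoverable from $z$. The main obstacle is the parity analysis in (b): the $\F$-added pair containing the fixed point $n$ of $y$ is governed by the parity of $n-r$, and one must carefully check in each case that the relevant pair always stays on one side of the block $\{n+1, \ldots, n+r\}$ to rule out an unwanted straddle, as this is what ultimately guarantees that $\I(z)$ is a genuinely I-Grassmannian involution with the advertised shape.
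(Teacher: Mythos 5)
Your overall strategy---compute $\I(z)$ directly from the cycle structure of $z=\F(y)$, verify it is I-Grassmannian, read off the shape, and use $\F\circ\I=\mathrm{id}$ for uniqueness---is the same as the paper's, but the two central claims you make about $\I(z)$ are false, and the ``straddle'' you identify as the main obstacle cannot be ruled out: it genuinely occurs in half of all cases. Take $n=2$, $r=1$, $\phi=(1)$, so $y=(1,3)$ and $z=\F(y)=\cdots(1,3)(2,4)(5,6)(7,8)\cdots$. The $y$-cycle $(\phi_1,n+1)=(1,3)$ is \emph{not} a cycle of $\I(z)$: the only candidate witness would need upper endpoint $2$, but $2$ is the \emph{lower} endpoint of its cycle $(2,4)$. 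Meanwhile the $\F$-added pair $(2,4)=(n,n+r+1)$ straddles the block $\{n+1,\dots,n+r\}=\{3\}$ and \emph{is} a cycle of $\I(z)$, witnessed by $(1,3)$; so $\I(z)=(2,4)\neq y$. The root of the error is your claim that $\phi_r<n$ forces some added pair to have its upper endpoint in $(\phi_1,n]$: the fixed point $n$ of $y$ may be the lower endpoint of its added pair. In general, the number of fixed points of $y$ in $(m,n]$ is $n-m-r$, so whenever $n-r$ is odd the added pair $(n,n+r+1)$ straddles the block and lies in $\I(z)$ (any $(\phi_i,n+i)$ is a witness), and when additionally $\phi=(n-r,n-r+1,\dots,n-1)$ the cycle $(\phi_1,n+1)$ drops out of $\I(z)$ altogether. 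Your argument for $i\geq 2$ is fine (the witness $(\phi_1,n+1)$ only needs to be a cycle of $z$, not of $\I(z)$), but the conclusion ``$\I(z)=y$'' that you need for the shape formula fails whenever $n-r$ is odd.

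The proposition survives because in those cases $\I(z)$ is a \emph{different} I-Grassmannian involution whose shape happens to coincide with $(n-\phi_1,\dots,n-\phi_r)$: one gets $y\cdot(n,n+r+1)$, or $(\phi_2,n+2)\cdots(\phi_r,n+r)(n,n+r+1)$, or $(2,n+2)(3,n+3)\cdots(n,2n)$ when $\phi=(1,2,\dots,n-1)$. This is exactly the case analysis on $X=[n]\setminus\{\phi_1,\dots,\phi_r\}$ (namely $|X|$ even, $|X|$ odd and at least $3$, $|X|=1$) that the paper's proof performs, and it is not optional. Its omission also undermines your step (c): since $\I(z)\neq y$ in general, ``reading off the parameters of $\I(z)$'' does not return $(n,r,\phi)$ directly --- for instance in the straddling case the I-Grassmannian parameters of $\I(z)$ are $(n,r+1,(\phi_1,\dots,\phi_r,n))$, which do not satisfy \eqref{fpf-nrphi-eq} --- so one must still match the parameters of $\I(z)$ back to $(n,r,\phi)$ case by case and check that distinct triples never produce the same $\I(z)$ before $\F\circ\I=\mathrm{id}$ yields uniqueness.
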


\begin{proof}
Let $X = [n]\setminus \{\phi_1,\phi_2,\dots,\phi_r\}$ so that $n \in X$.
If $|X|$ is even then $\Imap(z) = y$. If $|X|$ is odd and at least 3, then $\Imap(z) = y \cdot (n,n+r+1)$.
If $|X|= 1$, finally, then $\phi = (1,2,\dots,n-1)$ and $\Imap(z) = (2,n+2)(3,n+3)\cdots(n,2n)$.
In each case, 
$\flambda(z) =  (n-\phi_1, n-\phi_2,\dots,n-\phi_r)$ as desired.
The second assertion holds
since
an FPF-Grassmannian element of $\F_\infty$ is uniquely determined by
its image under $\Imap : \F_\infty \to \I_\infty$,
which must be I-Grassmannian with  an even number of fixed points in $[n]$
and
not equal to $(i+1,n+1)(i+2,n+2)\cdots(n,2n-i)$ for any $i \in [n]$. 
\end{proof}

Let $\ell^+(\phi)$ be whichever of $r$ or $r+1$ is even,
and let $[a_{ij}]_{1\leq i < j \leq n}$ denote the skew-symmetric matrix with $a_{ij}$
in position $(i,j)$ and $-a_{ij}$ in position $(j,i)$ for $i<j$ (and zeros on the diagonal).

\begin{corollary}\label{fpf-jt-cor}
In the setup of \eqref{fpf-nrphi-eq},
 $\Ffpf[\phi_1,\phi_2,\dots,\phi_r;n]
 = \pf\left[ \Ffpf[\phi_i,\phi_j;n]\right]_{1\leq i < j\leq \ell^+(\phi)}.$
\end{corollary}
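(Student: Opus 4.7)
The plan is to reduce the corollary to the classical Pfaffian identity of Schur for Schur $P$-functions (see e.g.\ Macdonald \cite{Macdonald2}, \S{III.8}, Ex.\ 13, or J\'ozefiak).

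First I would translate everything into Schur $P$-functions. By Proposition \ref{fpf-pf-prop}, the involution $z$ associated to $(\phi_1, \dots, \phi_r; n)$ is FPF-Grassmannian with shape $\flambda(z) = \lambda := (n-\phi_1, \dots, n-\phi_r)$, and by Theorem \ref{fgrass-thm} we have $\Ffpf[\phi_1, \dots, \phi_r; n] = P_\lambda$. Similarly, for each pair $1\leq i < j\leq r$ the involution $\F((\phi_i, n+1)(\phi_j, n+2))$ is FPF-Grassmannian of shape $(n-\phi_i, n-\phi_j)$, so $\Ffpf[\phi_i, \phi_j; n] = P_{(n-\phi_i, n-\phi_j)}$. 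When $r$ is odd, the convention $\Ffpf[\phi_i, 0; n] = \Ffpf[\phi_i; n] = P_{(n-\phi_i)}$ matches the standard convention $P_{(a,0)} = P_{(a)}$ used in the classical Pfaffian identity.

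Second, I would invoke the classical Schur Pfaffian formula: for any strict partition $\lambda$ with $\ell^+$ parts (where $\ell^+$ is whichever of $\ell(\lambda)$ or $\ell(\lambda)+1$ is even, padding by zero when needed), one has
\[
P_\lambda = \pf\bigl[P_{(\lambda_i, \lambda_j)}\bigr]_{1\leq i<j\leq \ell^+}.
\]
Applying this identity to $\lambda = (n-\phi_1, \dots, n-\phi_r)$ and using the translations above yields the corollary.

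Since the classical Pfaffian identity is available as a black box, there is no real obstacle: the only thing to check is that the indexing conventions for the padding case (odd $r$) match between the paper's notation $\Ffpf[\cdots, 0; n]$ and the standard convention $P_{(\cdot, 0)} = P_{(\cdot)}$, which follows immediately from Definition-Theorem \ref{schurp-def} for one-part strict partitions.
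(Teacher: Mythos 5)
Your proposal is correct and follows essentially the same route as the paper: Proposition \ref{fpf-pf-prop} and Theorem \ref{fgrass-thm} convert each $\Ffpf[\cdots;n]$ into a Schur $P$-function, and then Macdonald's Pfaffian identity $P_\lambda = \pf[P_{(\lambda_i,\lambda_j)}]_{1\leq i<j\leq \ell^+(\lambda)}$ gives the result. Your extra check that the odd-$r$ padding convention $\Ffpf[\phi_i,0;n]=P_{(n-\phi_i)}$ agrees with $P_{(a,0)}=P_{(a)}$ is a detail the paper leaves implicit, but it is the right thing to verify.
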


\begin{proof}
Macdonald proves that 
if $\lambda$ is a strict partition then
$P_\lambda = \pf[P_{\lambda_i\lambda_j}]_{1\leq i <j \leq \ell^+(\lambda)}$
 \cite[Eq.\ (8.11), {\S}III.8]{Macdonald2}.
 Given this fact
 and the preceding proposition,
the result follows from  Theorem~\ref{fgrass-thm}.
\end{proof}

Our goal is to prove that the identity in this corollary
holds 
with $\Ffpf[{\cdots};n]$ replaced by $\Sfpf[{\cdots};n]$.
In the following lemmas, we let
\be
\label{mfpf-eq}
\Mfpf[\phi;n]  =\Mfpf[\phi_1,\phi_2,\dots,\phi_r;n] \omdef= \left[\Sfpf[\phi_i,\phi_j;n]\right]_{1\leq i <j \leq \ell^+(\phi)}
\ee denote the $\ell^+(\phi)\times \ell^+(\phi)$ skew-symmetric matrix with $\Sfpf[\phi_i,\phi_j;n]$ is position $(i,j)$ for $i<j$.

\begin{lemma}\label{fpf-pf-lem1}
Maintain the notation of \eqref{fpf-nrphi-eq}, and suppose $p \in [n-1]$. Then
\[
\partial_p  \(\pf \Mfpf[\phi;n] \)=
\begin{cases}
\pf \Mfpf[\phi+e_i;n] &\text{if }p = \phi_i \notin \{ \phi_2-1, \dots,\phi_r-1\}\text{ for some $i \in [r]$}\\
0&\text{otherwise}
\end{cases}
\]
where $e_i = (0,\dots,0,1,0,0,\dots)$ is the standard basis vector
whose $i$th coordinate is 1.
\end{lemma}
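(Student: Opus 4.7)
The plan is to apply $\partial_p$ term-by-term to the Pfaffian expansion
\[
\pf \Mfpf[\phi;n] \;=\; \sum_{w \in \F_m}(-1)^{\ellfpf(w)}\!\!\prod_{(a,b)\in\Cyc(w)}\!\!\Sfpf[\phi_a,\phi_b;n],\qquad m=\ell^+(\phi),
\]
and to exploit the fact that the twisted Leibniz rule $\partial_p(fg) = (\partial_p f)g + (s_p f)\partial_p g$ collapses dramatically because most of the two-row Schubert polynomial factors are $s_p$-invariant.

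First I would settle the two-row base case: for $p \in [n-1]$, show that $\partial_p\Sfpf[\phi_a,\phi_b;n] = 0$ (and hence $s_p$ fixes the polynomial) whenever $p \notin \{\phi_a,\phi_b\}$; that $\partial_{\phi_a}\Sfpf[\phi_a,\phi_b;n]$ equals $\Sfpf[\phi_a+1,\phi_b;n]$ when $\phi_a+1 < \phi_b$ and vanishes when $\phi_a+1 = \phi_b$; and an analogous statement for $\partial_{\phi_b}$. All of this follows from Theorem~\ref{fpfthm} together with the fact that the underlying two-row FPF-Grassmannian involution of shape $(n-\phi_a,n-\phi_b)$ has FPF-visible descent set contained in $\{s_{\phi_a},s_{\phi_b}\}$, and I would adopt the skew-symmetric convention $\Sfpf[c,c;n]=0$ to uniformly handle collisions. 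The passage from $\partial_p f = 0$ to $s_p f = f$ is the elementary identity recalled in Section~\ref{divided-sect}.

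Second, I would apply $\partial_p$ to the Pfaffian expansion. If $p \neq \phi_i$ for every $i$, then in every $w$-summand every factor is $s_p$-fixed with vanishing $\partial_p$, so the iterated Leibniz rule yields $0$, proving the ``otherwise'' branch. If instead $p = \phi_{i^*}$ for the (necessarily unique) index $i^*\in[r]$, then in each $w$-summand the unique pair $(c,d)\in\Cyc(w)$ containing $i^*$ is the only one whose factor can be differentiated, and all other factors pass through $\partial_p$ unchanged, giving
\[
\partial_p\pf\Mfpf[\phi;n] \;=\; \sum_{w\in\F_m}(-1)^{\ellfpf(w)}\bigl(\partial_p\Sfpf[\phi_c,\phi_d;n]\bigr)\!\!\!\prod_{(a,b)\neq(c,d)}\!\!\!\Sfpf[\phi_a,\phi_b;n].
\]
By the base case, each differentiated factor is exactly the corresponding row/column-$i^*$ entry of $\Mfpf[\phi+e_{i^*};n]$, while the remaining factors are the rest of the entries of that same matrix. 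Since both Pfaffian expansions are indexed by the common set $\F_m$, the signs $(-1)^{\ellfpf(w)}$ line up automatically, and the sum is literally $\pf\Mfpf[\phi+e_{i^*};n]$. When the condition $\phi_{i^*}\notin\{\phi_2-1,\dots,\phi_r-1\}$ fails, i.e.\ $\phi_{i^*}+1 = \phi_j$ for some $j>i^*$, rows $i^*$ and $j$ of $\Mfpf[\phi+e_{i^*};n]$ become identical, and the Pfaffian of a skew-symmetric matrix with two equal rows vanishes (swapping them both negates the Pfaffian and preserves the matrix), recovering the ``otherwise'' branch in that subcase as well.

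The main obstacle is the two-row analysis in the first step: pinning down the FPF-visible descent set of the two-row FPF-Grassmannian involution and, in particular, handling the boundary subcases $\phi_a+1=\phi_b$ and $\phi_b=n-1$ (where $\phi_b+1=n$ pushes the parameter outside the setup of~\eqref{fpf-nrphi-eq}), so that the putative $\Sfpf[\phi_a,n;n]$ is interpreted consistently with the shape/skew-symmetry conventions used in the Pfaffian expansion of $\Mfpf[\phi+e_{i^*};n]$. Once this base-case dictionary is in hand, the reassembly in the second step is essentially formal.
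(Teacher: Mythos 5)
Your proposal is correct and follows essentially the same route as the paper's proof: compute $\partial_p$ of the individual entries $\Sfpf[\phi_i,\phi_j;n]$ via the defining recurrence \eqref{fpf-eq}, use the (twisted) Leibniz rule to see that $\partial_p$ acts on the Pfaffian by differentiating only the row and column indexed by the $k$ with $p=\phi_k$, and dispose of the case $\phi_k=\phi_{k+1}-1$ by observing that the resulting skew-symmetric matrix has two identical columns and hence vanishing Pfaffian. The paper is terser about the two-row base case, but the content is identical.
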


\begin{proof}
Let $\fk M = \Mfpf[\phi;n]$. If $1\leq i < j \leq \ell^+(\phi)$ then \eqref{fpf-eq} implies that
$\partial_p\fk M_{ij} =\partial_p \Sfpf[\phi_i,\phi_j;n]$ is
$\Sfpf[\phi_i +1, \phi_j]$ if $p = \phi_i  \neq \phi_j-1$,
$\Sfpf[\phi_i, \phi_j+1]$ if $p=\phi_j$,
and 0 otherwise.
Thus if $p \notin \{\phi_1,\phi_2,\dots,\phi_r\}$ then $\partial_p  \(\pf \fk M \) = 0$.
Suppose $p = \phi_k$.
Then $\partial_p\fk M_{ij}=0$ unless $i=k$ or $j=k$,
so 
$\partial_p  \(\pf \fk M \) = \pf \fk N$ where $\fk N$
is the matrix formed by applying $\partial_p$ to the entries in the $k$th row and $k$th column
of $\fk M$.
If $k<r$ and $\phi_k= \phi_{k+1}-1$, then columns $k$ and $k+1$ of $\fk N$ are identical,
so   $\pf \fk M= \pf \fk N = 0$.
If $k=r$ or if $k<r$ and $\phi_k \neq \phi_{k+1}-1$, then $\fk N = \Mfpf[\phi+e_k;n]$.
\end{proof}

\begin{lemma} \label{fpf-pf-lem2}
Let $n\geq 2$ and $D=(x_1+x_2)(x_1+x_3)\cdots (x_1+x_{n})$.
Then
$\pf \Mfpf[1;n] = D$, and if $b\in \PP$ is such that $1<b< n$, then $\pf \Mfpf[1,b;n]$
is divisible by $D$.
\end{lemma}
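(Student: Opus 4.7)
The plan is to compute each Pfaffian directly from Lemma~\ref{fgrass-lem4}. Since $\ell^+(\phi) = 2$ in both cases, each $\Mfpf$ is a $2 \times 2$ skew-symmetric matrix whose Pfaffian coincides with its $(1,2)$-entry.

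For the first claim, $\pf \Mfpf[1;n] = \Sfpf[1,0;n] = \Sfpf_z$ where $z = \F\bigl((1,n+1)\bigr)$ is FPF-Grassmannian of shape $\flambda(z) = (n-1)$. Since $\pi_{1,1} = \id$, Lemma~\ref{fgrass-lem4} gives $\Sfpf_z = x_1^{n-1} G_{1,n}$, and expanding $G_{1,n} = x_1^{-(n-1)}\prod_{k=2}^n (x_1+x_k)$ shows this is precisely $D$.

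For the second claim, $\pf \Mfpf[1,b;n] = \Sfpf[1,b;n] = \Sfpf_z$ with $z = \F\bigl((1,n+1)(b,n+2)\bigr)$ FPF-Grassmannian of shape $(n-1, n-b)$. Lemma~\ref{fgrass-lem4} yields
\[
\Sfpf_z = \pi_{1,1}\pi_{b,2}\bigl(x_1^{n-1} x_2^{n-b} G_{2,n}\bigr) = \pi_{b,2}\bigl(x_1^{n-1} x_2^{n-b} G_{2,n}\bigr),
\]
and substituting the explicit product formula for $G_{2,n}$ factors the argument as $D \cdot h$, where $h = x_2^{2-b}\prod_{k=3}^n(x_2+x_k) \in \cL$.

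The crux is a symmetry observation: $D = \prod_{k=2}^n(x_1+x_k)$ is symmetric in $x_2, \ldots, x_n$, so $s_i D = D$, hence $\pi_i D = D$, for all $i \in \{2, 3, \ldots, n-1\}$. Since the hypothesis $b < n$ ensures that $\pi_{b,2} = \pi_{b-1}\pi_{b-2}\cdots \pi_2$ involves only such indices, the product rule $\pi_i(Df') = D\pi_i(f')$ recalled in Section~\ref{divided-sect} lets us pull $D$ out to obtain $\Sfpf_z = D \cdot \pi_{b,2}(h)$. This establishes divisibility in $\cL$, and since each irreducible factor $x_1 + x_k$ of $D$ is coprime to every variable $x_i$, divisibility in $\cP$ follows. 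The only mild obstacle is keeping track of the symmetry range of $D$ against the indices of $\pi_{b,2}$; this lines up exactly because $b - 1 \leq n - 1$.
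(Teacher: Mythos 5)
Your reduction of both Pfaffians to single Schubert polynomials is correct (the matrices are $2\times 2$, so $\pf \Mfpf[1;n]=\Sfpf[1;n]$ and $\pf\Mfpf[1,b;n]=\Sfpf[1,b;n]$), and your central idea --- that $D$ is symmetric in $x_2,\dots,x_n$, hence commutes past the operators $\pi_2,\dots,\pi_{b-1}$ --- is exactly the mechanism the paper uses (there in the form $\partial_{b-1}(Dg)=D\,\partial_{b-1}g$ inside an induction on $b$ via Lemma~\ref{fpf-pf-lem1}). The passage from divisibility in $\cL$ to divisibility in $\cP$ is also fine. However, there is a genuine gap in your invocation of Lemma~\ref{fgrass-lem4}: the data $(\phi_1,\dots,\phi_r;n)$ appearing in that lemma are read off from $\I(z)$, not from the involution $y$ with $z=\F(y)$. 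By Proposition~\ref{fpf-pf-prop}, $\I(\F(y))=y$ only when $|[n]\setminus\{\phi_1,\dots,\phi_r\}|$ is even; otherwise $\I(z)$ acquires the extra cycle $(n,n+r+1)$, and Lemma~\ref{fgrass-lem4} then produces $\pi_{\phi_1,1}\cdots\pi_{\phi_r,r}\pi_{n,r+1}\bigl(x^{\flambda(z)}G_{r+1,n}\bigr)$, not the expression you wrote. Concretely, your computation of $\Sfpf[1;n]$ is justified by the lemma only for odd $n$ (for even $n\geq 4$ one has $\I(z)=(1,n+1)(n,n+2)$), and your computation of $\Sfpf[1,b;n]$ only for even $n$. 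The two expressions do turn out to be equal --- the extra factor $G_{r+1,n}/G_{r,n}$ is annihilated by $\pi_{n,r+1}$ after pulling out the symmetric part --- but that is an additional claim requiring proof, not a citation.

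The gap is repairable without much damage to your architecture. For the first claim you do not need Lemma~\ref{fgrass-lem4} at all: $\F((1,n+1))$ is FPF-dominant with $\Dfpf=\{(i,1):2\leq i\leq n\}$, so Theorem~\ref{fpfdominant-thm} gives $\Sfpf[1;n]=D$ directly, which is what the paper does. For the second claim, either establish the identity $\Sfpf[\phi_1,\dots,\phi_r;n]=\pi_{\phi_1,1}\cdots\pi_{\phi_r,r}\bigl(x^{(n-\phi_1,\dots,n-\phi_r)}G_{r,n}\bigr)$ for the parametrization of \eqref{fpf-nrphi-eq} (handling the parity where $\I(z)\neq y$), or follow the paper's route: compute $\pf\Mfpf[1,2;n]=(x_2+x_3)\cdots(x_2+x_n)D$ from Theorem~\ref{fpfdominant-thm} and induct on $b$ using $\pf\Mfpf[1,b;n]=\partial_{b-1}\bigl(\pf\Mfpf[1,b-1;n]\bigr)$ from Lemma~\ref{fpf-pf-lem1}, together with the symmetry of $D$ in $x_{b-1}$ and $x_b$.
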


\begin{proof}
Theorem \ref{fpfdominant-thm} implies 
that $\pf \Mfpf[1;n] = D$
and, when $n> 2$, that
 $\pf \Mfpf[1,2;n] = (x_2+x_3)\cdots (x_2+x_n)D$.
 If $2<b<n$
 then 
 $ \pf \Mfpf[1,b;n] = \partial_{b-1} ( \pf \Mfpf[1,b-1;n])$
 by the previous lemma.
 Since $D$ is symmetric in $x_{b-1}$ and $x_b$,
the desired property holds by induction.
\end{proof}

If $i : \PP\to \NN$ is a map with $i^{-1}(\PP)\subset [n]$,
then we define $x^i = x_1^{i(1)} x_2^{i(2)}\cdots x_n^{i(n)}$.
Given a nonzero polynomial
$f = \sum_{i : \PP \to \NN} c_i x^i \in \ZZ[x_1,x_2,\dots]$,
let $j : \PP \to \NN$
be the lexicographically minimal index
such that $c_j \neq 0$ and define $\minlex (f) = c_j x^j$.
We refer to $\minlex(f)$ as the
\emph{least term} of $f$.
Set $\minlex(0) = 0$,
so that $\minlex(fg) = \minlex(f) \minlex(g)$ for any polynomials $f,g$.
The following is \cite[Proposition 3.14]{HMP1}.

\begin{lemma}[See \cite{HMP1}]\label{fpf-minlex-lem}
If $z \in \F_\infty$ then
$\minlex (\Sfpf_z) =  x^{\cfpf(z)}= \prod_{(i,j) \in \Dfpf(z)} x_i $.
\end{lemma}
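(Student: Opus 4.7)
The plan is to reduce the claim to a classical identity for ordinary Schubert polynomials and then invoke Lemma~\ref{codef-lem}. By Definition~\ref{fS-def} we have $\Sfpf_z = \sum_{w \in \cAfpf(z)} \fkS_w$, and since each $\fkS_w$ has nonnegative integer coefficients, the lex-minimum monomials of the summands cannot cancel in the sum. A classical fact about Schubert polynomials is that $\minlex(\fkS_w) = x^{c(w)}$ for every $w \in S_\infty$; this can be deduced, for instance, from the Billey--Jockusch--Stanley monomial expansion, or by induction on $\ell(w)$ from the recurrence~\eqref{maineq}. Taking this for granted reduces the lemma to showing that $x^{c(\beta_{\min}(z))}$ is the lex-minimum of $\{x^{c(w)} : w \in \cAfpf(z)\}$, for then $\minlex(\Sfpf_z) = x^{c(\beta_{\min}(z))} = x^{\cfpf(z)}$ by Lemma~\ref{codef-lem}.

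For this remaining step, I would invoke the description of atoms from \cite[Theorem 6.22]{HMP2} recalled in Section~\ref{fpf-tri-sect}, which gives $\cAfpf(z) = \{w \in S_\infty : \beta_{\min}(z) \leq_{\cAfpf} w\}$. By transitivity it suffices to prove $c(v) <_{\text{lex}} c(w)$ for each generating relation $v <_{\cAfpf} w$ of the partial order. In such a relation, $v^{-1}$ and $w^{-1}$ agree outside four consecutive positions $i, i+1, i+2, i+3$ (with $i$ odd), where $v^{-1}$ reads $a,d,b,c$ and $w^{-1}$ reads $b,c,a,d$ for some $a<b<c<d$. A direct inspection shows that the codes $c(v)$ and $c(w)$ agree in every position less than $a$, while $c_a(w) = c_a(v) + 2$: the values $w(b) = i$ and $w(c) = i+1$ both lie below $w(a) = i+2$, whereas $v(b), v(c), v(d)$ all exceed $v(a) = i$. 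Thus $c(v) <_{\text{lex}} c(w)$, which finishes the argument when combined with the previous paragraph.

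The main potential obstacle is the lex-minimality identity $\minlex(\fkS_w) = x^{c(w)}$ for ordinary Schubert polynomials under the specific lex convention of Section~\ref{pfaffian-fpf-sect}; once that is accepted, the remaining combinatorial argument about codes of atoms is short and parallels the computation in the proof of Lemma~\ref{fpf-ut-lem1}. The final equality $x^{\cfpf(z)} = \prod_{(i,j) \in \Dfpf(z)} x_i$ in the statement is immediate from the definition of $\cfpf(z)$ as the row-counts of $\Dfpf(z)$.
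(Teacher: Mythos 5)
Your proposal is correct. Note that the paper itself supplies no proof of this lemma; it is quoted verbatim from \cite[Proposition 3.14]{HMP1}, so there is no in-text argument to compare against. Your reconstruction is the natural (and, as far as I can tell, the intended) one: since $\Sfpf_z=\sum_{w\in\cAfpf(z)}\fkS_w$ has nonnegative coefficients, $\minlex(\Sfpf_z)$ is governed by the lex-least of the monomials $\minlex(\fkS_w)=x^{c(w)}$, and the identity $\cAfpf(z)=\{w : \beta_{\min}(z)\leq_{\cAfpf} w\}$ reduces everything to checking that each generating relation $v<_{\cAfpf}w$ strictly increases the code in the order $\prec$. Your local computation of the codes is right: for $x<a$ the sets $\{w(y):y>x\}$ and $\{v(y):y>x\}$ coincide and $v(x)=w(x)$, so the codes agree there, while at position $a$ exactly the two extra values $w(b)=i$, $w(c)=i+1$ fall below $w(a)=i+2$, giving $c_a(w)=c_a(v)+2$. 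Two small points worth making explicit: (i) the conclusion that the coefficient of $x^{\cfpf(z)}$ in $\Sfpf_z$ is exactly $1$ requires the \emph{strict} inequality $x^{c(\beta_{\min}(z))}\prec x^{c(w)}$ for every other atom $w$, which your computation does deliver (every monomial of $\fkS_w$ is $\succeq x^{c(w)}\succ x^{c(\beta_{\min}(z))}$, so only $\fkS_{\beta_{\min}(z)}$ contributes to that monomial, with coefficient $1$); and (ii) the classical input $\minlex(\fkS_w)=x^{c(w)}$ under this particular convention for $\prec$ does hold and follows, e.g., from Kohnert's rule, since Kohnert moves only push boxes to rows of smaller index and hence produce exponent vectors that are strictly $\prec$-larger at the first position where they differ from $c(w)$.
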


Let $\mathscr{M}$ denote the set of monomials $x^i = x_1^{i(1)}x_2^{i(2)}\cdots$
for maps $i : \PP \to \NN$
with $i^{-1}(\PP)$ finite.
Define $\prec$ as the ``lexicographic''  order on $\mathscr{M}$,
that is, the order with $x^i \prec x^j$
when there exists $n \in \PP$ such that
 $i(t) = j(t)$ for $1 \leq t < n$ and $i(n) < j(n)$.
 Note that $\minlex(\Sfpf_z) \in \mathscr{M}$.
  Also, observe that if $a,b,c,d \in \mathscr{M}$ and $a\preceq c$ and $b\preceq d$, then
  $ab \preceq cd$ with equality if and only if $a=c$ and $b=d$.

\begin{lemma}\label{fpf-pf-igrass-lem}
Let $i,j,n \in \PP$. The following identities then hold:
\ben
\item[(a)] If $i < n$ then $\minlex ( \Sfpf[i;n] ) \succeq x_{i+1}x_{i+2}\cdots x_{n}$,
with equality if and only if $i$ is odd.
\item[(b)]
If $i<j< n$ then $\minlex ( \Sfpf[i,j;n] ) \succeq (x_{i+1}x_{i+2}\cdots x_n)(x_{j+1}x_{j+2}\cdots x_n)$,
with equality if and only if $i$ is odd and $j$ is even.
\een
\end{lemma}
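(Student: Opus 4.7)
By Lemma~\ref{fpf-minlex-lem}, $\minlex(\Sfpf_z) = \prod_{(a,b) \in \Dfpf(z)} x_a$ for any $z \in \F_\infty$, so the lemma reduces to a direct computation of $\Dfpf(z)$ for the FPF-Grassmannian involutions $z = \F((i,n+1))$ (in part (a)) and $z = \F((i,n+1)(j,n+2))$ (in part (b)) identified by Proposition~\ref{fpf-pf-prop}. By Definition~\ref{Fmap-def}, the nontrivial cycles of $z$ consist of the cycles of $y$ together with ``fill-in'' cycles obtained by pairing the remaining positive integers consecutively in their natural order. Contiguous fill-in pairs $(a, a+1)$ contribute nothing to $\Dfpf(z)$, while any ``skip'' fill-in cycle $(a, a')$ with $a' > a+1$ that jumps over support elements of $y$ contributes one entry $(b, a) \in \Dfpf(z)$ for each support element $b$ strictly between $a$ and $a'$ with $z(b) > a$.

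For part (a), the cycle $(i, n+1)$ contributes $(a, i)$ to $\Dfpf(z)$ for every $a \in \{i+1, \ldots, n\}$ with $z(a) > i$. A parity count of the non-support block $\{1, \ldots, i-1\}$ shows that there is a skip fill-in cycle over $i$ if and only if $i$ is even, in which case this cycle must be $(i-1, i+1)$. When $i$ is odd, no skip occurs, $\Dfpf(z) = \{(i+1, i), (i+2, i), \ldots, (n, i)\}$, and $\minlex(\Sfpf[i;n]) = x_{i+1} x_{i+2} \cdots x_n$ exactly. When $i$ is even, the skip cycle $(i-1, i+1)$ contributes $(i, i-1)$ to $\Dfpf(z)$ while the would-be entry $(i+1, i)$ drops out (since $z(i+1) = i-1 < i$), giving $\minlex = x_i\, x_{i+2} \cdots x_n$; this is lex-strictly greater than $x_{i+1} x_{i+2} \cdots x_n$, since the two monomials first differ at position $i$, where the former has exponent $1$ and the latter has exponent $0$.

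Part (b) proceeds analogously, but the analysis must track three possible skip fill-in cycles. If every fill-in cycle is contiguous, then the contributions from $(i, n+1)$ and $(j, n+2)$ yield $\Dfpf(z) = \{(a, i) : i+1 \le a \le n\} \cup \{(a, j) : j+1 \le a \le n\}$ and $\minlex = (x_{i+1}\cdots x_n)(x_{j+1}\cdots x_n)$. Analyzing the parities of the three non-support blocks $\{1,\ldots,i-1\}$, $\{i+1,\ldots,j-1\}$, and $\{j+1,\ldots,n\}$, the clean case holds precisely when $i$ is odd and $j$ is even. In every other parity combination at least one skip fill-in cycle is present: a skip $(i-1, i+1)$ over $i$ alone when $i$ is even and $j \ge i+2$, a skip $(j-1, j+1)$ over $j$ alone when $j$ is odd and $j \ge i+2$, or a ``double skip'' $(i-1, j+1)$ over both when $j = i+1$ and $i$ is even. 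Each such skip contributes at least one entry to $\Dfpf(z)$ in a row $\le \max\{i, j\}$ while removing one or more clean entries in strictly higher rows; in monomial terms this substitutes some $x_a$ in the clean target by $x_{a'}$ with $a' < a$, producing a lex-strictly greater $\minlex$.

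The principal technical obstacle is the parity bookkeeping in part (b), since a skip over $i$ shifts all subsequent pair positions in the ordering of non-support integers and can thereby influence whether a skip over $j$ also occurs. The cleanest approach is to work through the four parity subcases $(i \bmod 2, j \bmod 2) \in \{(0,0), (0,1), (1,0), (1,1)\}$, treating the degenerate subcase $j = i+1$ separately, and to verify in each case that any defect relative to the clean target corresponds to replacing some $x_a$ by $x_{a'}$ with $a' < a$; the claimed lex inequality then follows automatically.
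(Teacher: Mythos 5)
Your proposal is correct and follows essentially the same route as the paper: apply Lemma~\ref{fpf-minlex-lem} to reduce everything to computing $\Dfpf(z)$ for $z=\F(y)$, then do a parity case analysis of the ``fill-in'' cycles to see exactly when the skip cycle $(i-1,i+1)$ (resp.\ $(j-1,j+1)$ or $(i-1,j+1)$) appears and shifts an exponent to a lower index, forcing a lex-strictly greater least term. The paper's own proof is terser --- it calls the computation routine and works out only the subcase $i$ even, $j=i+1$ --- so your systematic treatment of the skip cycles and the four parity subcases is, if anything, a more complete account of the same argument.
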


\begin{proof}
The result follows by routine calculations using Lemma~\ref{fpf-minlex-lem}.
For example, suppose $i<j<n$ and let $y = (i,n+1)(j,n+2)$ and  $z = \Fmap(y)$,
so that $\Sfpf[i,j;n] =\Sfpf_z$.
If $i$ is even and $j=i+1$,
 then $\Dfpf(z) = \{(i,i-1),(i+1,i-1)\} \cup \{(i+1,i), (i+3,i),\dots,(n,i)\}\cup \{(i+3,i+1),\dots,(n,i+1)\}$
  so $\minlex( \Sfpf[i,j;n]) = (x_{i}x_{i+1}x_{i+3}\cdots x_n)(x_{j}x_{j+2}\cdots x_n)$.
 The other cases follow by similar analysis.
\end{proof}

\begin{lemma}\label{fpf-pf-lem3}
If  $n \in \PP$ and $r \in [n-1]$ then 
$\Sfpf[1,2,\dots,r;n] = \pf \Mfpf[1,2,\dots,r;n].$
\end{lemma}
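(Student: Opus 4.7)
The proof will proceed by induction on $r$, with the base case $r=1$ following directly from Lemma~\ref{fpf-pf-lem2}: the matrix $\Mfpf[1;n]$ is the $2\times 2$ skew-symmetric matrix whose $(1,2)$-entry is $\Sfpf[1,0;n]=\Sfpf[1;n]$, so $\pf\Mfpf[1;n]=\Sfpf[1;n]$, and that lemma identifies this common value with $D$.

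For the inductive step $r\ge 2$, set $P=\pf\Mfpf[1,2,\dots,r;n]$ and $Q=\Sfpf[1,2,\dots,r;n]$. Since the FPF-Grassmannian involution corresponding to $\phi=(1,2,\dots,r)$ is FPF-dominant, Theorem~\ref{fpfdominant-thm} yields the explicit product formula
\[
Q \;=\; \prod_{i=1}^{r}\prod_{k=i+1}^{n}(x_i+x_k),
\]
so the task reduces to showing that $P$ equals this product. Both polynomials are homogeneous of degree $rn-\binom{r+1}{2}$. Applying Lemma~\ref{fpf-pf-lem1} with $\phi=(1,\dots,r)$, one checks that $\partial_p P=0$ for every $p\in[n-1]\setminus\{r\}$: for $p<r$ we have $p=\phi_p$ with $\phi_p=\phi_{p+1}-1$, and for $r<p<n-1$ we have $p\notin\{\phi_1,\dots,\phi_r\}$. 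The same vanishings hold for $Q$, since the product is manifestly symmetric in $\{x_1,\dots,x_r\}$ and in $\{x_{r+1},\dots,x_n\}$. Thus $P$ and $Q$ both lie in the subring of polynomials symmetric in these two blocks of variables.

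To identify $P$ and $Q$ inside this subring, the plan is to compare both $\partial_r P$ and $\partial_r Q$ and then their least monomials. By Lemma~\ref{fpf-pf-lem1}, $\partial_r P=\pf\Mfpf[1,2,\dots,r-1,r+1;n]$, while a direct calculation on the product formula gives an explicit expression for $\partial_r Q$. Matching these requires analyzing the Pfaffian for $\phi=(1,\dots,r-1,r+1)$, which no longer corresponds to an FPF-dominant case and must be handled either by a secondary induction on the ``distance'' of $\phi$ from $(1,\dots,r)$ or by a direct Pfaffian expansion that exploits Lemma~\ref{fpf-pf-lem2} (each entry of the first row remains divisible by $D$). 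Having shown $\partial_r P=\partial_r Q$, we have $P-Q\in\ker\partial_r$, and combined with the symmetries above, any surviving difference would be highly constrained.

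The final step uses Lemmas~\ref{fpf-minlex-lem} and~\ref{fpf-pf-igrass-lem} to match least monomials: $\minlex Q=x^{\cfpf(z)}$, while applying Lemma~\ref{fpf-pf-igrass-lem} to each entry $M_{ij}$ of $\Mfpf[1,\dots,r;n]$ along a perfect matching of $[\ell^+(r)]$ lets us identify the lexicographically smallest contribution to $\pf P$, check that it comes from a single perfect matching with the right sign, and verify that it equals $x^{\cfpf(z)}$. Since $P$ and $Q$ have the same degree, the same least term, and the same image under every $\partial_p$ for $p\in[n-1]$, they must coincide. The main obstacle will be the two interlocking verifications: controlling $\partial_r P=\partial_r Q$ at the ``non-dominant'' intermediate step $\phi=(1,\dots,r-1,r+1)$, and tracking the signed cancellations in the Pfaffian expansion well enough to isolate the single least-monomial contribution.
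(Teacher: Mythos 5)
Your overall scaffolding (base case from Lemma~\ref{fpf-pf-lem2}, the product formula for $\Sfpf[1,\dots,r;n]$ from Theorem~\ref{fpfdominant-thm}, and the vanishing $\partial_p P=\partial_p Q=0$ for $p\in[n-1]\setminus\{r\}$ via Lemma~\ref{fpf-pf-lem1}) matches the paper's setup, but the argument has two genuine gaps. First, the step you flag as "the main obstacle" --- showing $\partial_r P=\partial_r Q$ by analyzing $\pf \Mfpf[1,\dots,r-1,r+1;n]$ --- is never actually carried out, and it is not a routine verification: that Pfaffian is exactly as hard as the statement being proved. Second, and more seriously, even granting every claimed identity, your concluding inference fails. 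Knowing that $P-Q$ is annihilated by all $\partial_p$ for $p\in[n-1]$ only tells you $P-Q$ is symmetric in $x_1,\dots,x_n$, and a nonzero homogeneous symmetric polynomial can have a lex-least term strictly $\succ$ a prescribed monomial of the same degree: for instance with $n=4$, $r=2$, the monomial symmetric function $m_{(2,1,1,1)}(x_1,\dots,x_4)$ has degree $5$ and least term $x_1x_2x_3x_4^2 \succ x_2x_3^2x_4^2 = \minlex\bigl(\Sfpf[1,2;4]\bigr)$. So "same degree, same least term, same image under every $\partial_p$" does not force $P=Q$.

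The paper closes this gap with a divisibility argument that your proposal is missing. Writing $D_i=(x_i+x_{i+1})\cdots(x_i+x_n)$, Lemma~\ref{fpf-pf-lem2} shows every entry of the first column of $\Mfpf[1,\dots,r;n]$ is divisible by $D_1$, hence so is $\pf \Mfpf[1,\dots,r;n]$ (expand along the first row); combining this with the symmetry of the Pfaffian in $x_1,\dots,x_r$ and the fact that $s_i(D_i)$ is divisible by $D_{i+1}$ yields that $\pf\Mfpf[1,\dots,r;n]$ is divisible by $D_1D_2\cdots D_r=\Sfpf[1,\dots,r;n]$. Since the two sides are homogeneous of the same degree, they agree up to a scalar, and \emph{only then} does the least-term comparison (via Lemma~\ref{fpf-pf-igrass-lem}, where the unique matching attaining the minimum is $\Theta_m$) pin the scalar to $1$. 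If you insert this divisibility step, your least-term analysis becomes the correct finishing move and the induction on $r$ and the computation of $\partial_r P$ become unnecessary.
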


\begin{proof}
The proof is similar to that of \cite[Lemma 4.77]{HMP4}.
Let
$D_i = (x_i+x_{i+1})(x_i+x_{i+2})\cdots(x_i+x_n)$ for $i \in [n-1]$
and  $\fk M = \Mfpf[1,2,\dots,r;n]$.
Theorem \ref{fpfdominant-thm} implies that
$\Sfpf[1,2,\dots,r;n]  = D_1D_2\cdots D_r$. 
Lemma~\ref{fpf-pf-lem1} 
implies that $\pf \fk M$ is symmetric in $x_1,x_2,\dots,x_r$.
Lemma~\ref{fpf-pf-lem2}
implies that every entry in the first column of $\fk M$,
and therefore also $\pf \fk M$, is divisible by $D_1$.
Since $s_i(D_i)$ is divisible by $D_{i+1}$,
it follows that 
$\pf \fk M$ is 
 divisible by $\Sfpf[1,2,\dots,r;n]$.
 To prove the lemma, it suffices to show that
$\pf \fk M$ and $\Sfpf[1,2,\dots,r;n]$ have the same least term.

Let $m \in \PP$ be whichever of $r$ or $r+1$ is even
and choose $z \in \F_{m}$.
By
 Lemma \ref{fpf-pf-igrass-lem},
\[
\minlex \(\prod_{z(i)<i \in [m] } \fk M_{z(i),i}\) 
\succeq (x_2\cdots x_n)(x_3\cdots x_n)\cdots (x_{r+1}\cdots x_n) = \minlex( \Sfpf[1,2,\dots,r;n]) 
,\] with equality if and only if $i$ is odd and $j$ is even whenever $i < j = z(i)$.
The only element  $z \in \F_m$ with the latter property is the involution 
$z  = (1,2)(3,4)\cdots(m-1,m) = \wfpf_m$, so
we deduce from  \eqref{pf-eq}  
 that $\minlex( \pf \fk M) = \minlex( \Sfpf[1,2,\dots,r;n])$ as needed.
\end{proof}

Let $\Sfpf[\phi;n] = \Sfpf[\phi_1,\phi_2,\dots,\phi_r;n]$.
The following is the main result of this section.

\begin{theorem}\label{tt:pfaffian}
It holds that $\Sfpf[\phi;n] = \pf \Mfpf[\phi;n]$.
\end{theorem}

\begin{proof}
If $\phi = (1,2,\dots,r)$ then $\Sfpf[\phi;n] = \pf \Mfpf[\phi;n]$ by the previous lemma.
Otherwise, there exists a smallest  $i \in [r]$ such that $i<\phi_i$. If $p = \phi_i-1$
then $\Sfpf[\phi;n] = \partial_p \Sfpf[\phi-e_i;n]$ 
by \eqref{fpf-eq}  and $\pf \Mfpf[\phi;n] = \partial_p( \pf \Mfpf[\phi-e_i;n])$
by Lemma~\ref{fpf-pf-lem1}. 
We may assume that $\Sfpf[\phi-e_i;n] = \pf \Mfpf[\phi-e_i;n]$ by induction, so the result follows.
\end{proof}

\begin{example}
\label{e:pfaffian}
For $\phi = (1,2,3)$ and $n=4$ the theorem reduces to the identity
\[
\Sfpf_{(1,5)(2,6)(3,7)(4,8)}
=
\pf \(\barr{rrrr}
0 & \Sfpf_{(1,5)(2,6)(3,4)} & \Sfpf_{(1,5)(2,4)(3,6)} & \Sfpf_{(1,5)(2,3)(4,6)} \\
-\Sfpf_{(1,5)(2,6)(3,4)} & 0 & \Sfpf_{(1,4)(2,5)(3,6)} & \Sfpf_{(1,3)(2,5)(4,6)} \\
-\Sfpf_{(1,5)(3,6)(2,4)} & -\Sfpf_{(1,4)(2,5)(3,6)} & 0 & \Sfpf_{(1,2)(3,5)(4,6)} \\
-\Sfpf_{(1,5)(2,3)(4,6)} & -\Sfpf_{(1,3)(2,5)(4,6)} & -\Sfpf_{(1,2)(3,5)(4,6)} & 0
\earr\)
\]
where for $z \in \F_n$ we define $\Sfpf_z = \Sfpf_{\ifpf(z)}$.
By Theorem \ref{fpfdominant-thm}, both of these expressions evaluate to
$(x_1+x_2)(x_1+x_3)(x_1+x_4)(x_2+x_3)(x_2+x_4)(x_3+x_4)$.
\end{example}

\newpage
\appendix
\section{Index of symbols}
\label{not-sect}

The tables below list our non-standard notations, with references to definitions where relevant.

\def\skip{\\[-4pt]}

\begin{center}
\begin{tabular}{l | l r}
\text{Symbol} & \text{Meaning} &\text{Reference}
 \\
 \hline
 $\NN$ & The set of nonnegative integers \\
$\PP$ & The set of positive integers \\
$[n]$ & The set of positive integers $\{1,2,\dots,n\}$ \\
$\phi_E $ & The unique order-preserving bijection $ [n] \to E$ for $E \subset \ZZ$ \\
$\psi_E $ & The unique order-preserving bijection $ E \to [n]$ for $E \subset \ZZ$ \\
\skip
$S_\ZZ$ & The group of permutations of $\ZZ$ with finite support \\
$\I_\ZZ$ & The set $\{ w \in S_\ZZ : w=w^{-1}\}$ of involutions in $S_\ZZ$ \\
 $S_\infty$ & Subgroup of permutations in $S_\ZZ$ fixing all numbers outside $\PP$ & \\  
$\I_\infty$ & The set $\{ w \in S_\infty : w=w^{-1}\}$  of involutions in $S_\infty$ \\
  $S_n$ & Subgroup of permutations in $S_\infty$ fixed all numbers outside $[n]$ & \\  
  \skip
$\wfpf$ & The permutation of $\ZZ$ given by $i \mapsto i - (-1)^i$ & \eqref{wfpf-eq} \\
$\wfpf_n$  & The permutation  $(1,2)(3,4) \dots (2n-1,2n) \in S_{2n}$ \\
$\F_n$ & The set of fixed-point-free involutions in $S_{2n}$ \\
$\F_\infty$ & The $S_\infty$-conjugacy class of $\wfpf$  & \S\ref{fpf-schub-sect} \\
$\F_\ZZ$ & The $S_\ZZ$-conjugacy class of $\wfpf$  & \S\ref{fpf-schub-sect} \\
\skip
$\iota$ & The natural inclusion $\F_n \hookrightarrow \F_\infty$ & \eqref{ifpf-def} \\
$\Fmap$ & A certain map $\I_\ZZ \to \F_\ZZ$ & Def.~\ref{Fmap-def} \\
$\Imap$ & A certain map $\F_\ZZ \to \I_\ZZ$ & Def.~\ref{I-def} \\
$\fpsi$ & A certain map $\F_\ZZ-\{\wfpf\} \to \F_\ZZ$ & Def.~\ref{fpsi-def} \\
\skip
$w_n$ & The longest permutation $n \cdots 321 \in S_n$  & \\
$[w]_E$ & The standardization of $w$ to the subset $E\subset \ZZ$ & \eqref{st-eq} \\
$[[w]]_E$ & The element $\iota([w]_E) \in \F_\infty$ for $E\subset \ZZ$ with $w(E)=E$  & \eqref{[[]]-eq}\\
$w \gg N$ & The map $\ZZ \to \ZZ$ given by $i \mapsto w(i-N) + N$ &  \\
\skip
$\cR(w)$ & The set of reduced words for $w \in W$ & \S\ref{prelim-sect} \\
$\cAfpf(z)$ & The set of minimal length elements $w \in S_\ZZ$  with $z=w^{-1} \wfpf w$  & \\
$\cRfpf(z)$ & The disjoint union $\cRfpf(z) = \bigsqcup_{w \in \cAfpf(z)} \cR(w)$ & \eqref{crfpf-eq} \\
$\beta_{\min}(z)$ & The minimal atom in $\cAfpf(z)$ for $z \in \F_\infty$ & Lem.~\ref{minatomfpf-lem} \\
\skip
$\Cyc_\ZZ(z)$ & The set $\{(i,j) \in \ZZ \times \ZZ: i < j = z(i)\}$ for $z \in \F_\ZZ$ & \eqref{inv-fpf-eq} \\
$\Cyc_\PP(z)$ & The intersection $\Cyc_\ZZ(z) \cap (\PP \times \PP)$ \\
$\inv(z)$  & The inversion set $\{ (i,j) \in \ZZ\times \ZZ : i< j \text{ and }z(i)>z(j)\}$ &  \\
$\inv_\fpf(z)$ & The set $\inv(z) - \Cyc_\ZZ(z)$ for $z\in \F_\ZZ$  & \eqref{inv-fpf-eq} \\
\skip
$\ellfpf$ & The FPF-involution length function $\F_\ZZ \to \NN$ & \eqref{ell-fpf-eq} \\
$\DesF(z)$ & A modified right descent set for $z \in \F_\ZZ$ & \eqref{ell-fpf-eq} \\
$\DesIF(z)$ & The set of FPF-visible descents of $z \in \F_\ZZ$ & \eqref{desif-eq} \\
$\DesI(z)$ & The set of visible descents of $z \in \I_\ZZ$ & \eqref{desi-eq} \\
\end{tabular}
\end{center}

\begin{center}
\begin{tabular}{l | l r}
\text{Symbol} & \text{Meaning} &\text{Reference}
 \\
 \hline
$\fkS_w$ & The Schubert polynomial of $w \in S_n$ & \eqref{maineq} \\
$\Sfpf_z$ & The FPF-involution Schubert polynomial $\sum_{w \in \cAfpf(z)} \fkS_w$ & Def.~\ref{fS-def} \\
$F_w$ & The Stanley symmetric function of $w \in S_n$ & Def.~\ref{F-def} \\
$\Ffpf_z$ & The FPF-involution symmetric function $\sum_{w \in \cAfpf(z)} F_w$ & Def.~\ref{fF-def} \\
\skip
  $<$ & The Bruhat order on $S_\ZZ$ or $\F_\ZZ$ & \S\ref{invtrans-sect} \\
  $\lessdot_\F$ & The covering relation for the Bruhat order on $\F_\ZZ$  & \S\ref{invtrans-sect} \\
  $<_{\cAfpf}$ & A certain partial order of $\cAfpf(z)$ & \eqref{<-a-eq} \\
\skip
$D(w)$ & The Rothe diagram $\{ (i, w(j)) : (i,j) \in \inv(w)\}$ & \eqref{d-eq} \\
$\Dfpf(z)$ & The involution Rothe diagram of $z \in \F_\infty$ & \eqref{dfpf-eq} \\
$c(w)$ & The code of $w \in S_\infty$  & \S\ref{fgrass-sect} \\
$\cfpf(z)$ & The involution code of $w \in \F_\infty$ & \S\ref{fgrass-sect} \\
\skip
$\lambda(w)$ & The partition given by sorting $c(w)$ for $w \in S_\infty$ & \S\ref{fpf-tri-sect} \\
$\flambda(z)$ & The shape of $w \in \F_\infty$ & \S\ref{fpf-tri-sect} \\
$\delta_n$ & The partition $(n-1,n-2,\dots,3,2,1)$ \\
$\lambda^T$ & The transpose of a partition $\lambda$ \\
\skip
$\cP$ & The polynomial ring $\ZZ\left[x_1,x_2,\dots\right]$  \\
$\cL$ & The Laurent polynomial ring $\ZZ\left[x_1,x_2,\dots,x_1^{-1},x_2^{-1},\dots\right]$  \\
$\partial_i$ & The $i$th divided difference operator & \eqref{partial-i-eq} \\
$\pi_i$ & The $i$th isobaric divided difference operator &\eqref{pi-i-eq} \\
$G_{m,n}$ & A certain element of $\cL$ & \eqref{Gmn-eq} \\
\skip
$\Lambda$ & The Hopf algebra of symmetric functions over $\ZZ$ & \cite{EC2} \\
$s_\lambda$ & The Schur function indexed by a partition $\lambda$ & \cite{EC2} \\
$P_\lambda$ & The Schur $P$-function indexed by a strict partition $\lambda$ & Def.~\ref{schurp-def} \\
\skip
$\Pfpf^\pm(y,r)$ & Index sets for sums in transition formula Theorem~\ref{monkfpf-thm} & \eqref{Pfpf-def} \\
$\fT(z) $ & The FPF-involution Lascoux-Sch\"utzenberger tree & Def.~\ref{fT-def}  \\
$\Lfpf(z) $ & The set $\{ i \in \ZZ : i<p \text{ and }(i,p)y(i,p) \in \Pfpf^-(y,p)\}$ & \eqref{lfpf-eq}  \\
$\Cfpf(z,E) $ & The set $ \left\{ (i,p)y(i,p) : i \in E\cap \Lfpf(z)\right\}$ & \eqref{ccfpf-eq}  \\
\skip
$\pf A$ & The Pfaffian of a skew-symmetric matrix $A$ & \eqref{pf-eq} \\
$\Sfpf[\phi;n] $ & An instance of $\Sfpf_z$ where $z$ is FPF-Grassmannian & \eqref{sfpf-ffpf-eq} \\
$\Ffpf[\phi;n]$ & An instance of $\Ffpf_z$ where $z$ is FPF-Grassmannian & \eqref{sfpf-ffpf-eq} \\
$\Mfpf[\phi;n]$ & A certain skew-symmetric matrix & \eqref{mfpf-eq}
\end{tabular}
\end{center}

\end{document}